\newtheorem{theorem}{Theorem}
\newtheorem{remark}[theorem]{Remark}
\newtheorem{lemma}[theorem]{Lemma}
\newtheorem{proposition}[theorem]{Proposition}
\newtheorem{definition}[theorem]{Definition}
\newcommand{\N}{\mathbb{N}}
\newcommand{\R}{\mathbb{R}}
\renewcommand{\l}{\left}
\renewcommand{\r}{\right}
\numberwithin{theorem}{section}
\numberwithin{equation}{section}
\title[Degenerate Kirchhoff double phase problems]{Least energy sign-changing solution for degenerate Kirchhoff double phase problems}
\author[\'{A}. Crespo-Blanco]{\'{A}ngel Crespo-Blanco}
\address[\'{A}. Crespo-Blanco]{Technische Universit\"{a}t Berlin, Institut f\"{u}r Mathematik, Stra\ss e des 17.\,Juni 136, 10623 Berlin, Germany}
\email{crespo@math.tu-berlin.de}
\author[L. Gasi\'{n}ski]{Leszek Gasi\'{n}ski}
\address[L. Gasi\'{n}ski]{Department of Mathematics, University of the National Education Commission, Krakow, Podchorazych 2, 30-084 Krakow, Poland}
\email{leszek.gasinski@up.krakow.pl}
\author[P. Winkert]{Patrick Winkert}
\address[P. Winkert]{Technische Universit\"{a}t Berlin, Institut f\"{u}r Mathematik, Stra\ss e des 17.\,Juni 136, 10623 Berlin, Germany}
\email{winkert@math.tu-berlin.de}
\subjclass{35A01, 35J20, 35J25, 35J62}
\keywords{Constant sign solutions, constraint set, least energy sign-changing solution, multiple solutions, Poincar\'{e}-Miranda existence theorem, quantitative deformation lemma}
\begin{document}

\begin{abstract}
	In this paper we study the following nonlocal Dirichlet equation of double phase type
	\begin{align*}
		-\psi \left [ \int_\Omega \left ( \frac{|\nabla u |^p}{p} + \mu(x) \frac{|\nabla u|^q}{q}\right)\,\mathrm{d} x\right] \mathcal{G}(u) = f(x,u)\quad \text{in } \Omega, \quad u = 0\quad  \text{on } \partial\Omega,
	\end{align*}
	where $\mathcal{G}$ is the double phase operator given by
	\begin{align*}
		\mathcal{G}(u)=\operatorname{div} \left(|\nabla u|^{p-2}\nabla u + \mu(x) |\nabla u|^{q-2}\nabla u \right)\quad u\in W^{1,\mathcal{H}}_0(\Omega),
	\end{align*}
	$\Omega\subseteq \mathbb{R}^N$, $N\geq 2$, is a bounded domain with Lipschitz boundary $\partial\Omega$, $1<p<N$, $p<q<p^*=\frac{Np}{N-p}$, $0 \leq \mu(\cdot)\in L^\infty(\Omega)$, $\psi(s) = a_0 + b_0 s^{\vartheta-1}$ for $s\in\mathbb{R}$,  with $a_0 \geq 0$, $b_0>0$ and $\vartheta \geq 1$, and $f\colon\Omega\times\mathbb{R}\to\mathbb{R}$ is a Carath\'{e}odory function that grows superlinearly and subcritically. We prove the existence of two constant sign solutions (one is positive, the other one negative) and of a sign-changing solution which turns out to be a least energy sign-changing solution of the problem above. Our proofs are based on variational tools in combination with the quantitative deformation lemma and the Poincar\'{e}-Miranda existence theorem.
\end{abstract}

\maketitle

\section{Introduction and main results}
Given a bounded domain $\Omega\subseteq \R^N$, $N\geq 2$, with Lipschitz boundary $\partial\Omega$, we study double phase problems with a nonlocal Kirchhoff term of the form
\begin{equation}
	\label{problem}
	\begin{aligned}
		-\psi \left [ \int_\Omega \left ( \frac{|\nabla u |^p}{p} + \mu(x) \frac{|\nabla u|^q}{q}\right)\,\mathrm{d} x\right] \mathcal{G}(u) & = f(x,u)\quad &  & \text{in } \Omega,\\
		u& = 0           &  & \text{on } \partial\Omega,
	\end{aligned}
\end{equation}
where
\begin{align}\label{operator_double_phase}
	\mathcal{G}(u)=\operatorname{div} \left(|\nabla u|^{p-2}\nabla u + \mu(x) |\nabla u|^{q-2}\nabla u \right)\quad \text{for }u\in W^{1,\mathcal{H}}_0(\Omega)
\end{align}
is the double phase operator, $1<p<N$, $p<q<p^*=\frac{Np}{N-p}$, $0 \leq \mu(\cdot)\in L^\infty(\Omega)$, $\psi(s) = a_0 + b_0 s^{\vartheta-1}$ for $s\in\R$ with $a_0 \geq 0$, $b_0>0$ and $\vartheta \geq 1$, and $f\colon\Omega\times\R\to\R$ is a Carath\'{e}odory function that grows superlinearly and subcritically, see the precise assumptions \eqref{H3} below.

Problem \eqref{problem} combines a nonlocal Kirchhoff term with a nonhomogeneous operator of double phase type. Note that the operator given in \eqref{operator_double_phase} is related to the energy functional
\begin{align}\label{integral_minimizer}
	\omega \mapsto \int_\Omega \big(|\nabla  \omega|^p+\mu(x)|\nabla  \omega|^q\big)\,\mathrm{d} x,
\end{align}
which was first introduced by Zhikov \cite{Zhikov-1986} in 1986. Such functional can be used to describe models for strongly anisotropic materials in the context of homogenization and elasticity. In fact, the hardening properties of strongly anisotropic materials change point by point and the modulating coefficient $\mu(\cdot)$ helps to regulate the mixture of two different materials with hardening powers $p$ and $q$. Also, functionals of the shape \eqref{integral_minimizer} have several mathematical applications in the study of duality theory and of the Lavrentiev gap phenomenon, see Zhikov \cite{Zhikov-1995,Zhikov-2011}. Regularity properties for local minimizers of \eqref{integral_minimizer} have been first published in the papers by Baroni-Colombo-Mingione \cite{Baroni-Colombo-Mingione-2015,Baroni-Colombo-Mingione-2018} and Colombo-Mingione \cite{Colombo-Mingione-2015a,Colombo-Mingione-2015b}. It should be noted that \eqref{integral_minimizer} belongs to the class of the integral functionals with nonstandard growth condition as a special case of the groundbreaking works by Marcellini \cite{Marcellini-1991,Marcellini-1989}. We also refer to the recent papers by Cupini-Marcellini-Mascolo \cite{Cupini-Marcellini-Mascolo-2023} and Marcellini \cite{Marcellini-2023} with $u$-dependence.

Another interesting phenomenon in problem \eqref{problem} is the occurrence of the nonlocal Kirchhoff term which generalizes a model introduced by Kirchhoff \cite{Kirchhoff-1883} of the form
\begin{align*}
	\rho \frac{\partial^2 u}{\partial t^2}-\left(\frac{\rho_0}{h}+\frac{E}{2L}\int_0^L \left|\frac{\partial u}{\partial x}\right|^2 \,\mathrm{d} x\right)\frac{\partial^2 u}{\partial x^2}=0
\end{align*}
with constants $\rho, \rho_0, h, E$ and $L$. Such model can be seen as a generalization of the classical D'Alembert wave equation. In general, Kirchhoff-type problems have a strong background in several applications in physics. After the outstanding work of Lions \cite{Lions-1978} about the abstract framework of Kirchhoff-type problems, several works have been published with different structure and various techniques. In the extensive literature on Kirchhoff problems, we refer, for example, to the papers of Autuori-Pucci-Salvatori \cite{Autuori-Pucci-Salvatori-2010}, D'Ancona-Spagnolo \cite{DAncona-Spagnolo-1992}, Figueiredo \cite{Figueiredo-2013}, Fiscella \cite{Fiscella-2019}, Fiscella-Valdinoci \cite{Fiscella-Valdinoci-2014}, Gasi\'{n}ski-Santos J\'{u}nior \cite{Gasinski-Santos-Junior-2020}, Mingqi-R\u{a}dulescu-Zhang \cite{Mingqi-Radulescu-Zhang-2019}, Pucci-Xiang-Zhang \cite{Pucci-Xiang-Zhang-2015}, Xiang-Zhang-R\u{a}dulescu \cite{Xiang-Zhang-Radulescu-2016} and the references therein.

Problems as in \eqref{problem} involving a Kirchhoff term are said to be degenerate if $a_0=0$ and nondegenerate if $a_0>0$. It is worth noting that the degenerate case is rather interesting and is treated in well-known papers in the Kirchhoff theory. We do cover the degenerate case in our paper which has several applications in physics. For example, the transverse oscillations of a stretched string with nonlocal flexural rigidity depends continuously on the Sobolev deflection norm of $u$ via $\psi(\int_\Omega |\nabla u|^2\,\mathrm{d} x)$, that is, $\psi(0)=0$ is nothing less than the base tension of the string is zero.

In the context of double phase problems of Kirchhoff  type there exist only few works. The first paper has been published by Fiscella-Pinamonti \cite{Fiscella-Pinamonti-2023} who considered the problem
\begin{align}\label{problem2534}
	-m \left[\int_\Omega \left( \frac{|\nabla u|^p}{p} + \mu(x) \frac{|\nabla u|^q}{q}\right)\,\mathrm{d} x\right]\mathcal{G}(u) =f(x,u) \quad \text{in } \Omega,\quad u  = 0 \quad \text{on } \partial\Omega,
\end{align}
with a right-hand side that has subcritical growth satisfying the Ambrosetti-Rabi\-no\-witz condition. The existence of a nontrivial weak solution of \eqref{problem2534} has been shown by applying the mountain pass theorem. Recently, Arora-Fiscella-Mukherjee-Win\-kert \cite{ Arora-Fiscella-Mukherjee-Winkert-2023} considered singular Kirchhoff double phase problems given by
\begin{align}\label{kirchhoff-critical}
	-m \left[\int_\Omega \left( \frac{|\nabla u|^p}{p} + \mu(x) \frac{|\nabla u|^q}{q}\right)\,\mathrm{d} x\right]\mathcal{G}(u) = \lambda u^{-\gamma} +u^{r-1} \quad \text{in } \Omega,
	\quad u\mid_{\partial\Omega} = 0
\end{align}
and used an appropriate decomposition of the Nehari manifold in order to prove the existence of two positive solutions with different energy sign, see also \cite{Arora-Fiscella-Mukherjee-Winkert-2022} by the same authors for the critical case to problem \eqref{kirchhoff-critical}. We also refer to the works of Cen-Vetro-Zeng \cite{Cen-Vetro-Zeng-2023} for degenerate double phase problems, Fiscella-Marino-Pinamonti-Verzellesi \cite{Fiscella-Marino-Pinamonti-Verzellesi-2023} for Kirchhoff double phase problems with Neumann boundary condition,  Gupta-Dwivedi \cite{Gupta-Dwivedi-2023} for mountain pass type solutions and  Ho-Winkert \cite{Ho-Winkert-2023} for infinitely many solutions via an abstract critical point result. It should be noted that all these works apply different methods than in our paper.

Let us now state the precise hypotheses to problem \eqref{problem} and the main results. We suppose the following assumptions:

\begin{enumerate}[label=\textnormal{(H$_1$)},ref=\textnormal{H$_1$}]
	\item\label{H1}
		$1<p<N$, $p<q<p^*=\frac{Np}{N-p}$ and $0 \leq \mu(\cdot)\in L^\infty(\Omega)$.
\end{enumerate}

\begin{enumerate}[label=\textnormal{(H$_2$)},ref=\textnormal{H$_2$}]
	\item\label{H2}
		$\psi\colon  [0,\infty) \to [0,\infty)$ is a continuous function defined by
		\begin{align*}
			\psi(s) = a_0 + b_0 s^{\vartheta-1} \quad \text{for all } s\geq 0,
		\end{align*}
		where $a_0 \geq 0$, $b_0>0$ and $\vartheta \geq 1$ such that $q\vartheta<p^*$.
\end{enumerate}

\begin{enumerate}[label=\textnormal{(H$_3$)},ref=\textnormal{H$_3$}]
	\item\label{H3}
		$f\colon \Omega \times \R \to \R$ is a Carath\'{e}odory function such that the following holds:
		\begin{enumerate}[label=\textnormal{(\roman*)},ref=\textnormal{\roman*}]
			\item\label{H3i}
				there exists a constant $c>0$ such that
				\begin{align*}
					|f(x,s)| & \leq c\left( 1 +|s|^{r-1}\right)
				\end{align*}
				for a.a.\,$x\in\Omega$, for all $s\in \R$, where $r<p^*$;
			\item\label{H3ii}
				\begin{align*}
					 & \lim_{s \to \pm \infty}\,\frac{f(x,s)}{|s|^{q\vartheta-2}s}=+\infty \quad\text{uniformly for a.a.\,}x\in\Omega;
				\end{align*}
			\item\label{H3iii}
				if $a_0 > 0$, assume
				\begin{align*}
					 & \lim_{s \to 0}\,\frac{f(x,s)}{|s|^{p-2}s}=0 \quad\text{uniformly for a.a.\,}x\in\Omega,
				\end{align*}
				and if $a_0 = 0$, assume
				\begin{align*}
					& \lim_{s \to 0}\,\frac{f(x,s)}{|s|^{p\vartheta-2}s}=0 \quad\text{uniformly for a.a.\,}x\in\Omega;
				\end{align*}
			\item\label{H3iv}
				the function
				\begin{align*}
					s\mapsto f(x,s)s- q\vartheta F(x,s)
				\end{align*}
				is nondecreasing on $[0,\infty)$ and nonincreasing on $(-\infty,0]$ for a.a.\,$x\in\Omega$, where $F(x,s)=\int_0^s f(x,t)\,\mathrm{d} t$;
			\item\label{H3v}
				the function
				\begin{align*}
					s\mapsto \frac{f(x,s)}{|s|^{q\vartheta-1}}
				\end{align*}
				is strictly increasing on $(-\infty,0)$ and on $(0,+\infty)$ for a.a.\,$x\in\Omega$.
		\end{enumerate}
\end{enumerate}

\begin{remark}
	\label{remark-H2}
	Note that \eqref{H3}\eqref{H3i} and \eqref{H3ii} imply $q\vartheta < r$. Also, from \eqref{H3}\eqref{H3i} and \eqref{H3}\eqref{H3ii} it follows that
	\begin{align}
		\label{superlinear-F}
		\lim_{s \to \pm \infty}\,\frac{F(x,s)}{|s|^{q\vartheta}}=+\infty \quad\text{uniformly for a.a.\,}x\in\Omega.
	\end{align}
\end{remark}

Now we can state the definition of a weak solution of problem \eqref{problem}.

\begin{definition}
	A function $u \in  W^{1, \mathcal{H} }_0 ( \Omega )$ is said to be a weak solution of problem \eqref{problem} if
	\begin{align*}
		\psi(\Phi_\mathcal{H}(\nabla u))
		\int_\Omega \left(|\nabla u|^{p-2}\nabla u + \mu(x) |\nabla u|^{q-2}\nabla u \right)\cdot \nabla v\,\mathrm{d} x= \int_{\Omega} f(x,u)v\,\mathrm{d} x
	\end{align*}
	is satisfied for all $v \in  W^{1, \mathcal{H} }_0 ( \Omega )$, where
	\begin{align*}
		\Phi_{\mathcal{H}}(\nabla u)= \int_\Omega \left(\frac{|\nabla u|^p}{p}+\mu(x) \frac{|\nabla u|^q}{q}\right)\,\mathrm{d} x=\frac{1}{p}\|\nabla u\|_p^p+\frac{1}{q}\|\nabla u\|_{q,\mu}^q.
	\end{align*}
\end{definition}

Our main result in this paper reads as follows.

\begin{theorem}
	\label{theorem_main_result}
	Let hypotheses \eqref{H1}--\eqref{H3} be satisfied. Then problem \eqref{problem} has at least two nontrivial constant sign solutions $u_0, v_0\in  W^{1, \mathcal{H} }_0 ( \Omega )$ such that
	\begin{align*}
		u_0(x) \geq 0 \quad\text{and}\quad v_0(x) \leq 0 \quad \text{for a.a.\,} x\in \Omega.
	\end{align*}
	In addition, a third sign-changing solution $y_0\in W^{1, \mathcal{H} }_0 ( \Omega )$ exists which turns out to be a least energy sign-changing solution for \eqref{problem}.
\end{theorem}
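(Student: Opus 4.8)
The plan is to recast \eqref{problem} variationally and to work on Nehari-type constraint sets. Define the energy functional $J\colon W^{1,\mathcal{H}}_0(\Omega)\to\R$ by
\begin{align*}
	J(u)=\widehat{\psi}\big(\Phi_{\mathcal{H}}(\nabla u)\big)-\int_\Omega F(x,u)\,\mathrm{d} x,
	\qquad \widehat{\psi}(t)=\int_0^t\psi(\tau)\,\mathrm{d}\tau=a_0t+\frac{b_0}{\vartheta}t^{\vartheta}.
\end{align*}
Under \eqref{H1}, \eqref{H2} and \eqref{H3}\eqref{H3i}, the functional $J$ is well defined and of class $C^1$, its critical points are precisely the weak solutions of \eqref{problem}, and the compact embedding $W^{1,\mathcal{H}}_0(\Omega)\hookrightarrow L^r(\Omega)$ (available since $r<p^*$) makes the superlinear part weakly sequentially continuous. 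Writing $u=u^++u^-$ for the positive and negative parts, the key structural fact is that $\nabla u^+$ and $\nabla u^-$ have disjoint supports, so $\Phi_{\mathcal{H}}(\nabla u)=\Phi_{\mathcal{H}}(\nabla u^+)+\Phi_{\mathcal{H}}(\nabla u^-)$; however, since $\widehat{\psi}$ is applied to the \emph{sum}, one has $J(u^++u^-)\neq J(u^+)+J(u^-)$, and this nonlocal coupling is what drives the whole difficulty.

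For the two constant sign solutions I would truncate $f$, replacing it by $f(x,s^+)$ (respectively $f(x,s^-)$), and minimise the resulting functional $J_+$ (respectively $J_-$) on its Nehari manifold $\mathcal{N}_+$ (respectively $\mathcal{N}_-$). For fixed $u$ with $u^+\neq0$ the one-dimensional fibre map $s\mapsto J_+(su)$ has a unique positive critical point: its behaviour near $0$ is governed by \eqref{H3}\eqref{H3iii} (matched to the leading order $\|\nabla u\|_p^{p}$ when $a_0>0$ and $\|\nabla u\|_p^{p\vartheta}$ when $a_0=0$), the superlinearity \eqref{H3}\eqref{H3ii} forces $J_+(su)\to-\infty$ as $s\to\infty$, and the strict monotonicity \eqref{H3}\eqref{H3v} yields uniqueness of the maximiser. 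Hence $\mathcal{N}_+$ is a well-defined $C^1$ manifold on which $J_+$ is bounded below by a positive constant; a minimising sequence is bounded, using the superlinearity \eqref{H3}\eqref{H3ii} and the Ambrosetti-Rabinowitz-type monotonicity \eqref{H3}\eqref{H3iv}, and weak lower semicontinuity together with the compact embedding produces a minimiser $u_0$. Testing with $u_0^-$ gives $u_0^-=0$, so $u_0\geq0$ is a nontrivial constant sign weak solution; $v_0\leq0$ follows symmetrically.

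For the sign-changing solution I would minimise $J$ over the constraint set
\begin{align*}
	\mathcal{M}=\big\{u\in W^{1,\mathcal{H}}_0(\Omega):u^+\neq0,\ u^-\neq0,\ \langle J'(u),u^+\rangle=0,\ \langle J'(u),u^-\rangle=0\big\}.
\end{align*}
The decisive point is that for each $u$ with $u^\pm\neq0$ there is a \emph{unique} pair $(s_0,t_0)\in(0,\infty)^2$ with $s_0u^++t_0u^-\in\mathcal{M}$. Because of the nonlocal term, the two equations
\begin{align*}
	\langle J'(su^++tu^-),su^+\rangle=0,\qquad \langle J'(su^++tu^-),tu^-\rangle=0
\end{align*}
are genuinely coupled through $\psi\big(\Phi_{\mathcal{H}}(s\nabla u^++t\nabla u^-)\big)$ and cannot be solved componentwise; here the \textbf{Poincar\'{e}-Miranda theorem} enters. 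On a square $[\varepsilon,R]^2$ the first expression is positive on $\{s=\varepsilon\}$ and negative on $\{s=R\}$ (uniformly in $t$), by the near-zero estimate of \eqref{H3}\eqref{H3iii} and the superlinearity \eqref{H3}\eqref{H3ii} applied along the $u^+$ fibre, and symmetrically the second expression in $t$; Poincar\'{e}-Miranda then produces an interior zero, whose uniqueness again follows from \eqref{H3}\eqref{H3v}. This makes the fibre projection onto $\mathcal{M}$ well defined, and with $m:=\inf_{\mathcal{M}}J$ a minimising sequence is bounded and admits, via the compact embedding, a limit $y_0\in\mathcal{M}$ attaining $m$.

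The hard part is to show that the constrained minimiser $y_0$ is a \emph{free} critical point of $J$, hence a genuine weak solution, and not merely a critical point of $J|_{\mathcal{M}}$. I would argue by contradiction: if $J'(y_0)\neq0$, there is a small ball around $y_0$ on which $\|J'\|$ is bounded away from $0$, and the \textbf{quantitative deformation lemma} supplies a deformation $\eta$ that strictly decreases the energy below $m$ on that ball while fixing everything outside it. The delicate step, made nontrivial precisely by the nonlocal coupling, is to compose $\eta$ with the two-parameter map $(s,t)\mapsto s\,y_0^++t\,y_0^-$ and to use a Poincar\'{e}-Miranda/degree argument once more to show that the deformed surface still intersects $\mathcal{M}$; this yields a point of $\mathcal{M}$ with energy strictly below $m$, contradicting minimality. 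Therefore $J'(y_0)=0$, and since $y_0^\pm\neq0$ the solution $y_0$ is sign-changing. Finally, any sign-changing weak solution lies in $\mathcal{M}$, so $J(y_0)=m$ is the least possible energy among them; that is, $y_0$ is a least energy sign-changing solution. I expect the reconstruction of the $(s,t)$-coupling in this final degree argument, rather than the embeddings or the fibre estimates, to be the genuine obstacle.
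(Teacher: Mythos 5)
Your treatment of the sign-changing solution is essentially the paper's own proof: the same constraint set $\mathcal{M}$, the Poincar\'{e}-Miranda theorem on a square to produce the pair $(\alpha_u,\beta_u)$ with uniqueness coming from \eqref{H3}\eqref{H3v}, attainment of the infimum via the compact embedding, and the quantitative deformation lemma combined with a second Poincar\'{e}-Miranda argument on the deformed two-parameter surface to show the constrained minimizer is a free critical point (this is Propositions \ref{proposition_nodal_unique-pair} through \ref{proposition_sign-changing-solution}). Where you genuinely diverge is the constant-sign solutions: you propose minimizing the truncated functionals over their Nehari manifolds $\mathcal{N}_\pm$, whereas the paper applies the mountain pass theorem (Theorem \ref{Th:MPT}) to $\varphi_\pm$ after verifying the Cerami condition (Proposition \ref{proposition_cerami}), that $0$ is a strict local minimizer (Proposition \ref{proposition_local_minimizers}), and that $\varphi_\pm(tu)\to-\infty$ (Proposition \ref{proposition_unbounded_below}). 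Your route buys a ground-state characterization of $u_0,v_0$; the paper's route buys freedom from any natural-constraint argument.

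That freedom matters, and it is where your sketch has a gap. You assert that $\mathcal{N}_+$ is a ``well-defined $C^1$ manifold,'' but under \eqref{H3} the nonlinearity $f$ is only a Carath\'{e}odory function --- the paper explicitly emphasizes that no differentiability of $f$ is assumed --- so $u\mapsto\langle J_+'(u),u\rangle$ is merely continuous and $\mathcal{N}_+$ carries no $C^1$ structure. Consequently the implicit Lagrange-multiplier step that upgrades a minimizer of $J_+|_{\mathcal{N}_+}$ to a critical point of $J_+$ is not available as written; you would need to substitute the Szulkin--Weth generalized Nehari method or the same deformation-lemma device you already use on $\mathcal{M}$. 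A second, smaller omission: uniqueness of the fibre maximizer of $s\mapsto J_+(su)$ does not follow from \eqref{H3}\eqref{H3v} alone, because the Kirchhoff term $\psi(\Phi_{\mathcal{H}}(s\nabla u))\rho_{\mathcal{H}}(s\nabla u)$ is not a single power of $s$; one must check that this quantity divided by $s^{q\vartheta}$ is nonincreasing in $s$ (which holds since $p<q$ and $\vartheta\geq 1$), exactly the computation the paper performs in two parameters in Step II of Proposition \ref{proposition_nodal_unique-pair}. Both defects are repairable, but as stated the constant-sign half of your argument is incomplete, while the paper's mountain pass argument (Proposition \ref{proposition_constant_sign_solutions}) avoids both issues.
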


The proof of Theorem \ref{theorem_main_result} is based on a new technique that minimizes the corresponding energy functional of \eqref{problem} on a constraint set in which all sign-changing solutions of \eqref{problem} are contained. This set is different from the usual nodal Nehari manifold due to the nonlocal character of the problem. In order to demonstrate the different situation to other problems of similar type but without Kirchhoff term, let us consider the following problem
\begin{align}\label{problem-without-Kirchhoff}
	-\operatorname{div} \left(|\nabla u|^{p-2}\nabla u + \mu(x) |\nabla u|^{q-2}\nabla u \right)  = f(x,u)\quad  \text{in } \Omega, \quad
	u = 0 \quad  \text{on } \partial\Omega
\end{align}
with the same function $f\colon\Omega\times\R\to\R$. The energy functional $J\colon  W^{1, \mathcal{H} }_0 ( \Omega )\to\R$ of \eqref{problem-without-Kirchhoff} is given by
\begin{align*}
	J(u)=\frac{1}{p}\|\nabla u\|_p^p+\frac{1}{q}\|\nabla u\|_{q,\mu}^q-\int_\Omega F(x,u)\,\mathrm{d} x.
\end{align*}
Sign-changing solutions of \eqref{problem-without-Kirchhoff} can be found, for example, by minimizing the energy functional $J$ over the related nodal Nehari manifold
\begin{align*}
	\mathcal{N}_0= \left\{ u \in  W^{1, \mathcal{H} }_0 ( \Omega )\colon \pm u^\pm \in \mathcal{N} \right\},
\end{align*}
where
\begin{align*}
	\mathcal{N} = \left\{ u \in  W^{1, \mathcal{H} }_0 ( \Omega )\colon \langle J'(u) , u \rangle = 0, \; u \neq 0 \right\}
\end{align*}
is the Nehari manifold to \eqref{problem-without-Kirchhoff} with $\langle\cdot,\cdot\rangle$ being the duality pairing and $u^{\pm}$ denote the positive and negative part of $u$, respectively. Such treatment has been used for problems like \eqref{problem-without-Kirchhoff} in the papers by Crespo-Blanco-Winkert \cite{Crespo-Blanco-Winkert-2023} for variable exponent double phase problems, Liu-Dai \cite{Liu-Dai-2018} for superlinear problems, Gasi\'{n}ski-Papageorgiou \cite{Gasinski-Papageorgiou-2021} for locally Lipschitz right-hand sides and Gasi\'{n}ski-Winkert \cite{Gasinski-Winkert-2021} for Robin double phase problems. The successful  usage of this technique mainly relies on the following decompositions of the energy functional for suitable $u$
\begin{align*}
	\langle J'(u),u^+\rangle&=\langle J'(u^+),u^+\rangle,\quad
	\langle J'(u),-u^-\rangle=\langle J'(-u^-),-u^-\rangle, \\
	J(u)&=J(u^+)+J(-u^-).
\end{align*}
However, due to the appearance of the nonlocal term, such decompositions are not available anymore for problem \eqref{problem}. Indeed, let $\Psi\colon  [0,\infty) \to [0,\infty)$ be defined by
\begin{align*}
	\Psi(s)= \int_0^s\psi(t)\,\mathrm{d} t= a_0s  +\frac{b_0}{\vartheta}s^\vartheta,
\end{align*}
then the energy functional $\varphi\colon  W^{1, \mathcal{H} }_0 ( \Omega )\to\R$ of \eqref{problem} is given by
\begin{align*}
	\varphi(u) = \Psi[\Phi_{\mathcal{H}}(\nabla u)] -\int_\Omega F(x,u)\,\mathrm{d} x.
\end{align*}
For the functional $\varphi$ and $u\in W^{1,\mathcal{H}}_0(\Omega)$ with $u^+ \neq 0 \neq u^-$ we have the following relations when $\vartheta>1$
\begin{align*}
	\langle \varphi'(u),u^+\rangle&>\langle \varphi'(u^+),u^+\rangle,\quad
	\langle \varphi'(u),-u^-\rangle>\langle \varphi'(-u^-),-u^-\rangle, \\
	\varphi(u)&>\varphi(u^+)+\varphi(-u^-).
\end{align*}
Instead of using the set $\mathcal{N}_0$, we are going to minimize our functional on the constraint set
\begin{align}\label{constraint-set}
	\mathcal{M}=\Big\{u \in  W^{1, \mathcal{H} }_0 ( \Omega )\colon u^{\pm}\neq 0,\, \l\langle \varphi'(u),u^+ \r\rangle= \l\langle \varphi'(u),-u^- \r\rangle=0 \Big\}.
\end{align}
It is clear that all sign-changing solutions of \eqref{problem} belong to $\mathcal{M}$, so the global minimizer of $\varphi$ restricted to $\mathcal{M}$ is a least energy sign solution of \eqref{problem} provided it is a critical point. Note that the set $\mathcal{M}$ is not a manifold anymore. As far as we know, the set $\mathcal{M}$ was first introduced by Bartsch-Weth \cite{Bartsch-Weth-2005} in order to produce nodal solutions for the semilinear equation
\begin{align}\label{problem-Bartsch-Weth}
	-\Delta u +u=f(u) \quad\text{on }\Omega
\end{align}
with differentiable $f$ growing superlinearly and subcritically provided $\Omega$ contains a large ball. An extension of \eqref{problem-Bartsch-Weth} to the nonlocal case has been done by Shuai \cite{Shuai-2015}. The constraint set $\mathcal{M}$  was then used by several authors to different types of problems in order to get sign-changing solutions of the considered problems. The existence of ground state sign-changing solutions for Kirchhoff type problems in bounded domains of the shape
\begin{align*}
	-\left(a+b \|\nabla u\|_2^2\right)\Delta u = f(u)\quad  \text{in } \Omega, \quad
	u = 0 \quad  \text{on } \partial\Omega
\end{align*}
has been proved by Tang-Cheng \cite{Tang-Cheng-2016} under weaker assumptions on $f$ as in \cite{Shuai-2015} while Schr\"{o}dinger-Kirchhoff problems of the form
\begin{align}\label{problem-Schroedinger-Kirchhoff-type}
	-\left(a+b \|\nabla u\|_2^2\right)\Delta u+V(x)u = f(u)\quad  \text{in } \R^3, \quad
	u \in H^1(\R^3)
\end{align}
were treated by Tang-Chen \cite{Tang-Chen-2017} and Wang-Zhang-Cheng \cite{Wang-Zhang-Cheng-2018} under different assumptions on $f$ and $V$. Both papers present least energy sign-changing solutions for \eqref{problem-Schroedinger-Kirchhoff-type} using the constraint set $\mathcal{M}$. Related results using the same technique can be found in the papers by Liang-R\u{a}dulescu \cite{Liang-Radulescu-2020} for fractional Kirchhoff problems with logarithmic and critical nonlinearity, Zhang \cite{Zhang-2021} for Schr\"{o}dinger-Kirchhoff-type problems and Zhang \cite{Zhang-2023} for $N$-Laplacian equations of Kirchhoff type.

Recently, Chahma-Chen \cite{Chahma-Chen-2023} considered the quasilinear $p$-Laplacian Kirchhoff problem
\begin{align}\label{problem-p-Laplace}
	-\left(a+b \|\nabla u\|_p^p\right)\Delta_p u = \lambda f(x,u)+|u|^{p^*-2}u\quad  \text{in } \Omega, \quad
	u = 0 \quad  \text{on } \partial\Omega,
\end{align}
and obtained a least energy sign-changing solution of \eqref{problem-p-Laplace} which is strictly larger than twice of that of any ground state solution of \eqref{problem-p-Laplace}. The proofs are based on topological degree theory in combination with the quantitative deformation lemma. In this direction, we also mention the work of Li-Wang-Zhang \cite{Li-Wang-Zhang-2020} who studied the existence of ground state sign-changing solutions for $p$-Laplacian Kirchhoff-type problems with logarithmic nonlinearity given by
\begin{align*}
	-\left(a+b \|\nabla u\|_p^p\right)\Delta_p u = |u|^{q-2}u\ln u^2\quad  \text{in } \Omega, \quad
	u = 0 \quad  \text{on } \partial\Omega,
\end{align*}
where $a,b>0$, $4\leq 2p <q<p^*$ and $N>p$. A $(p,q)$-Laplacian Kirchhoff-type problems given by
\begin{equation}\label{Kirchhoff-pq}
	\begin{aligned}
		&-\left(a+b \|\nabla u\|_p^p\right)\Delta_p u-\left(c+d \|\nabla u\|_q^q\right)\Delta_q u\\
		&+V(x)\left(|u|^{p-2}u+|u|^{q-2}u\right)=K(x)f(u)\quad \text{in } \R^3
	\end{aligned}
\end{equation}
has been treated by Isernia-Repov\v{s} \cite{Isernia-Repovs-2021} who proved the existence of a least energy sign solution and, in addition, if $f$ is odd, infinitely many nontrivial solutions of \eqref{Kirchhoff-pq}. Finally, we also mention the papers of Papageorgiou-R\u{a}dulescu-Repov\v{s} \cite{Papageorgiou-Radulescu-Repovs-2019-b,Papageorgiou-Radulescu-Repovs-2020} for properties of the spectrum of the double phase operator and for ground state solutions of related equations involving this operator, respectively.

We point out that in all the above mentioned works the Kirchhoff function is of type $s\mapsto a+bs$ while in our case we consider more general functions given by $s\mapsto a_0+b_0 s^{\vartheta-1}$ with $\vartheta\geq 1$. So our results do not only extend the ones above to double phase operator but also to a more general Kirchhoff function. Moreover, we do not need any differentiability on the function $f$. In our proofs for a least energy sign changing solution to \eqref{problem} we combine in a suitable way variational tools together with the Poincar\'{e}-Miranda existence theorem and the quantitative deformation lemma. The existence of the constant sign solutions is mainly based on the mountain pass structure of the problem.

The paper is organized as follows. In Section \ref{Section_2} we recall the definition of Musielak-Orlicz Sobolev spaces and state properties of the double phase operator. In addition, we mention the Poincar\'{e}-Miranda existence theorem and the quantitative deformation lemma in their version that will be used later. Section \ref{Section_3} deals with the existence of the constant sign solutions which is based on the fact that the related truncated energy functionals satisfy the Cerami condition. Finally, in Section \ref{Section_4}, via various auxiliary propositions, we prove the existence of a sign-changing solution via minimizing the energy functional of \eqref{problem} over the constraint set $\mathcal{M}$ given in \eqref{constraint-set}.

\section{Preliminaries}\label{Section_2}

In this section we recall some preliminary facts and tools which are needed in the sequel. For this purpose, let $\Omega \subseteq \R^N$, $N \geq 2$, be a bounded domain with Lipschitz boundary $\partial \Omega$. We denote by $L^{r}(\Omega)$ and $L^r ( \Omega ; \R^N )$ the usual Lebesgue spaces equipped with the norm $\| \cdot \|_r$ for $1 \leq r \leq \infty$ and  $W^{1,r}_0(\Omega)$ stands for the Sobolev space with zero traces endowed with the equivalent norm $\| \nabla \cdot \|_r$.

Let $M ( \Omega )$ be the set of all measurable functions $u \colon \Omega \to \R$ and let $\mathcal{H} \colon \Omega \times [0,\infty) \to [0,\infty)$ be the nonlinear function defined by
\begin{align*}
	\mathcal{H} ( x , t )= t^p + \mu(x) t^q,
\end{align*}
where we suppose condition \eqref{H1}. The Musielak-Orlicz space $L^\mathcal{H} ( \Omega )$ is defined by
\begin{align*}
	L^\mathcal{H} ( \Omega ) = \left \{ u \in M(\Omega)\colon \rho_{\mathcal{H}} ( u ) < +\infty \right \}
\end{align*}
equipped with the Luxemburg norm
\begin{align*}
	\| u \|_{\mathcal{H}} = \inf \left \{ \tau >0 \colon \rho_{\mathcal{H}} \left ( \frac{u}{\tau} \right ) \leq 1  \right \},
\end{align*}
where the modular function $\rho_{\mathcal{H}}$ is given by
\begin{align}
	\label{modular}
	\rho_{\mathcal{H}}(u):=\int_\Omega \mathcal{H} ( x , |u| ) \,\mathrm{d} x=\int_\Omega \big( |u|^{p} + \mu(x) | u |^q \big ) \, \mathrm{d} x.
\end{align}
Furthermore, we define the seminormed space
\begin{align*}
	L^q_\mu ( \Omega ) = \left \{ u \in M ( \Omega ) \colon \int_\Omega \mu(x) | u |^q \, \mathrm{d} x < +\infty \right \},
\end{align*}
which is endowed with the seminorm
\begin{align*}
	\| u \|_{q,\mu} = \left ( \int_\Omega \mu(x) | u |^q \, \mathrm{d} x \right)^{ \frac{1}{q} }.
\end{align*}
In the same way we define $L^q_\mu(\Omega;\R^N)$.

The Musielak-Orlicz Sobolev space $W^{1 , \mathcal{H} } ( \Omega )$ is defined by
\begin{align*}
	W^{1,\mathcal{H}}(\Omega)= \left \{ u \in L^\mathcal{H}(\Omega) \colon | \nabla u | \in L^{\mathcal{H}} ( \Omega ) \right\}
\end{align*}
equipped with the norm
\begin{align*}
	\| u \|_{1 , \mathcal{H} } = \| \nabla u \|_{\mathcal{H}} + \| u \|_{\mathcal{H}},
\end{align*}
where $\|\nabla u\|_\mathcal{H}=\|\,|\nabla u|\,\|_{\mathcal{H}}$. The completion of $C^\infty_0(\Omega)$ in $W^{1,\mathcal{H}}(\Omega)$ is denoted by $W^{1,\mathcal{H}}_0(\Omega)$. We know that $L^{\mathcal{H}}(\Omega)$, $W^{1,\mathcal{H}}(\Omega)$ and $W^{1,\mathcal{H}}_0(\Omega)$ are reflexive Banach spaces and we can equip the space $W^{1,\mathcal{H}}_0(\Omega)$ with the equivalent norm
\begin{align*}
	\|u\|=\|\nabla u\|_{\mathcal{H}},
\end{align*}
see the paper of Crespo-Blanco-Gasi\'{n}ski-Harjulehto-Winkert \cite{Crespo-Blanco-Gasinski-Harjulehto-Winkert-2022}.

The following proposition shows the relation between the norm $\|\cdot\|_{\mathcal{H}}$ and the modular function $\rho_\mathcal{H}$, see Liu-Dai \cite{Liu-Dai-2018} for its proof.

\begin{proposition}
	\label{proposition_modular_properties}
	Let \eqref{H1} be satisfied, $\lambda>0$, $y\in L^{\mathcal{H}}(\Omega)$ and $\rho_{\mathcal{H}}$ be defined by \eqref{modular}. Then the following hold:
	\begin{enumerate}
		\item[\textnormal{(i)}]
			If $y\neq 0$, then $\|y\|_{\mathcal{H}}=\lambda$ if and only if $ \rho_{\mathcal{H}}(\frac{y}{\lambda})=1$;
		\item[\textnormal{(ii)}]
			$\|y\|_{\mathcal{H}}<1$ (resp.\,$>1$, $=1$) if and only if $ \rho_{\mathcal{H}}(y)<1$ (resp.\,$>1$, $=1$);
		\item[\textnormal{(iii)}]
			If $\|y\|_{\mathcal{H}}<1$, then $\|y\|_{\mathcal{H}}^q\leq \rho_{\mathcal{H}}(y)\leq\|y\|_{\mathcal{H}}^p$;
		\item[\textnormal{(iv)}]
			If $\|y\|_{\mathcal{H}}>1$, then $\|y\|_{\mathcal{H}}^p\leq \rho_{\mathcal{H}}(y)\leq\|y\|_{\mathcal{H}}^q$;
		\item[\textnormal{(v)}]
			$\|y\|_{\mathcal{H}}\to 0$ if and only if $ \rho_{\mathcal{H}}(y)\to 0$;
		\item[\textnormal{(vi)}]
			$\|y\|_{\mathcal{H}}\to +\infty$ if and only if $ \rho_{\mathcal{H}}(y)\to +\infty$.
	\end{enumerate}
\end{proposition}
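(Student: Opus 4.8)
The plan is to reduce all six statements to the single scaling identity
\[
\rho_{\mathcal{H}}\left(\frac{y}{\tau}\right) = \tau^{-p}\|y\|_p^p + \tau^{-q}\|y\|_{q,\mu}^q \qquad (\tau>0),
\]
which follows at once from the definition \eqref{modular} by pulling the constants $\tau^{-p}$ and $\tau^{-q}$ out of the two integrals. For fixed $y\in L^{\mathcal{H}}(\Omega)$ with $y\neq 0$, the right-hand side defines a function $g(\tau)$ that is continuous on $(0,\infty)$, strictly decreasing (since $\|y\|_p^p>0$), and satisfies $g(\tau)\to+\infty$ as $\tau\to 0^+$ and $g(\tau)\to 0$ as $\tau\to+\infty$. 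Hence there is a unique $\tau_0>0$ with $g(\tau_0)=1$, and $\{\tau>0\colon g(\tau)\leq 1\}=[\tau_0,\infty)$. This yields statement (i): the infimum defining $\|y\|_{\mathcal{H}}$ is attained at $\tau_0$, so $\|y\|_{\mathcal{H}}=\lambda$ if and only if $g(\lambda)=\rho_{\mathcal{H}}(y/\lambda)=1$.

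Next I would read off (ii) from the same monotonicity. For $y\neq 0$ one has $\rho_{\mathcal{H}}(y)=g(1)$ and $\|y\|_{\mathcal{H}}=\tau_0$ with $g(\tau_0)=1$; since $g$ is strictly decreasing, $\tau_0<1\Leftrightarrow g(1)<g(\tau_0)=1$, that is $\|y\|_{\mathcal{H}}<1\Leftrightarrow\rho_{\mathcal{H}}(y)<1$, and likewise for the cases $>1$ and $=1$, while $y=0$ is trivial.

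For (iii) and (iv) the idea is to start from the equality $1=\rho_{\mathcal{H}}(y/\lambda)=\lambda^{-p}\|y\|_p^p+\lambda^{-q}\|y\|_{q,\mu}^q$ furnished by (i) with $\lambda=\|y\|_{\mathcal{H}}$, and to multiply through first by $\lambda^p$ and then by $\lambda^q$:
\[
\lambda^p = \|y\|_p^p + \lambda^{p-q}\|y\|_{q,\mu}^q, \qquad \lambda^q = \lambda^{q-p}\|y\|_p^p + \|y\|_{q,\mu}^q.
\]
When $\lambda<1$ one has $\lambda^{p-q}\geq 1\geq\lambda^{q-p}$ (because $p<q$), so the first identity gives $\|y\|_{\mathcal{H}}^p=\lambda^p\geq\rho_{\mathcal{H}}(y)$ and the second gives $\|y\|_{\mathcal{H}}^q=\lambda^q\leq\rho_{\mathcal{H}}(y)$, which is exactly (iii); when $\lambda>1$ both inequalities reverse and yield (iv). Finally, (v) and (vi) follow by squeezing: for a sequence with $\|y\|_{\mathcal{H}}\to 0$ the bound (iii) eventually applies and gives $\|y\|_{\mathcal{H}}^q\leq\rho_{\mathcal{H}}(y)\leq\|y\|_{\mathcal{H}}^p$, forcing $\rho_{\mathcal{H}}(y)\to 0$, while conversely $\rho_{\mathcal{H}}(y)\to 0$ together with (ii) places one in the regime of (iii), whence $\|y\|_{\mathcal{H}}\leq\rho_{\mathcal{H}}(y)^{1/q}\to 0$; the divergent case (vi) is identical with the roles of the estimates (iii) and (iv) interchanged.

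The argument is almost entirely bookkeeping, so the only genuine point requiring care is the claim in (i) that the defining infimum is in fact a minimum and that the normalization $\rho_{\mathcal{H}}(y/\|y\|_{\mathcal{H}})=1$ holds exactly. This rests on the continuity of $g$ at $\tau_0$ together with strict monotonicity, both transparent from the explicit formula for $g$, and on the finiteness $\rho_{\mathcal{H}}(y)<\infty$, which is precisely the hypothesis $y\in L^{\mathcal{H}}(\Omega)$. Beyond keeping the directions of the inequalities $\lambda^{p-q}\geq 1$ and $\lambda^{q-p}\leq 1$ straight in the passage between (iii) and (iv), I expect no real obstacle.
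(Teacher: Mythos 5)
Your proof is correct, and it is worth noting that the paper itself contains no proof of Proposition \ref{proposition_modular_properties}: it simply cites Liu--Dai \cite{Liu-Dai-2018}. The standard argument there is carried out in the spirit of general Musielak--Orlicz theory, resting on the unit ball property ($\|y\|_{\mathcal{H}}\leq 1$ if and only if $\rho_{\mathcal{H}}(y)\leq 1$) and the two-sided scaling estimates $\min\{\tau^p,\tau^q\}\,\rho_{\mathcal{H}}(y)\leq \rho_{\mathcal{H}}(\tau y)\leq \max\{\tau^p,\tau^q\}\,\rho_{\mathcal{H}}(y)$, which hold for any modular with $p$--$q$ growth. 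You instead exploit the explicit two-term structure of $\mathcal{H}(x,t)=t^p+\mu(x)t^q$ to obtain the \emph{exact} identity $g(\tau)=\rho_{\mathcal{H}}(y/\tau)=\tau^{-p}\|y\|_p^p+\tau^{-q}\|y\|_{q,\mu}^q$, from which continuity, strict monotonicity (using $\|y\|_p^p>0$ for $y\neq 0$, as you observe), the limits at $0^+$ and $+\infty$, and hence attainment of the Luxemburg infimum with $\rho_{\mathcal{H}}(y/\|y\|_{\mathcal{H}})=1$ all become transparent; multiplying the normalization by $\lambda^p$ and $\lambda^q$ then gives (iii) and (iv) as identities-plus-one-sign-check rather than via the general scaling inequalities. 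What each approach buys: yours is fully self-contained and elementary, avoiding any appeal to Orlicz-space machinery, while the cited route generalizes verbatim to modulars without an explicit formula. Two trivial points you should make explicit for completeness: in (iii) the case $y=0$ must be treated separately (both sides vanish), since your derivation via (i) assumes $y\neq 0$; and (v), (vi) are statements about sequences, which your squeezing argument handles correctly once one notes that the hypotheses place the tail of the sequence in the regime of (iii) or (iv), respectively.
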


We also mention some useful embeddings for the spaces $L^{\mathcal{H}}(\Omega)$ and $W^{1,\mathcal{H}}_0(\Omega)$, see Crespo-Blanco-Gasi\'{n}ski-Harjulehto-Winkert \cite{Crespo-Blanco-Gasinski-Harjulehto-Winkert-2022}.

\begin{proposition}
	\label{proposition_embeddings}
	Let \eqref{H1} be satisfied. Then the following embeddings hold:
	\begin{enumerate}
		\item[\textnormal{(i)}]
			$L^{\mathcal{H}}(\Omega) \hookrightarrow L^{r}(\Omega) $ and $ W^{1, \mathcal{H} }_0 ( \Omega )\hookrightarrow W^{1,r}_0(\Omega)$ are continuous for all $r\in [1,p]$;
		\item[\textnormal{(ii)}]
			$ W^{1, \mathcal{H} }_0 ( \Omega ) \hookrightarrow L^{r}(\Omega)$ is continuous for all $r \in [1,p^*]$ and compact for all $r \in [1,p^*)$;
		\item[\textnormal{(iii)}]
			$L^{\mathcal{H}}(\Omega) \hookrightarrow L^q_\mu(\Omega)$ is continuous;
		\item[\textnormal{(iv)}]
			$L^{q}(\Omega) \hookrightarrow L^{\mathcal{H}}(\Omega)$ is continuous.
	\end{enumerate}
\end{proposition}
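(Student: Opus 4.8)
The plan is to derive all four embeddings from the pointwise structure of $\mathcal{H}(x,t) = t^p + \mu(x)t^q$ together with Proposition \ref{proposition_modular_properties} and the classical Sobolev and Rellich--Kondrachov theorems; no deep new input is required, since the point is merely to transfer known facts about $L^p(\Omega)$, $L^q(\Omega)$ and $W^{1,p}_0(\Omega)$ through the modular-norm comparison.

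First I would settle (i) and (iii), which follow at once from the lower bounds $\mathcal{H}(x,t) \geq t^p$ and $\mathcal{H}(x,t) \geq \mu(x)t^q$. For $u \neq 0$ put $\lambda = \|u\|_{\mathcal{H}}$; by Proposition \ref{proposition_modular_properties}(i) one has $\rho_{\mathcal{H}}(u/\lambda) = 1$, hence $\int_\Omega |u/\lambda|^p\,\mathrm{d} x \leq \rho_{\mathcal{H}}(u/\lambda) = 1$ and $\int_\Omega \mu(x)|u/\lambda|^q\,\mathrm{d} x \leq 1$. This yields immediately $\|u\|_p \leq \|u\|_{\mathcal{H}}$ and $\|u\|_{q,\mu} \leq \|u\|_{\mathcal{H}}$, i.e.\ the continuous embeddings $L^{\mathcal{H}}(\Omega) \hookrightarrow L^p(\Omega)$ and $L^{\mathcal{H}}(\Omega) \hookrightarrow L^q_\mu(\Omega)$, proving (iii). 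Since $\Omega$ is bounded, H\"{o}lder's inequality gives $L^p(\Omega) \hookrightarrow L^r(\Omega)$ for every $r \in [1,p]$, which finishes (i) for $L^{\mathcal{H}}$; the statement $W^{1,\mathcal{H}}_0(\Omega) \hookrightarrow W^{1,r}_0(\Omega)$ then follows by applying the same inequality to $|\nabla u|$ and recalling the equivalent norms $\|u\| = \|\nabla u\|_{\mathcal{H}}$ and $\|\nabla u\|_r$.

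For (iv) I would use the upper bound $\mathcal{H}(x,t) \leq t^p + \|\mu\|_\infty t^q$ together with the boundedness of $\Omega$. Given $u \in L^q(\Omega)$ with $u \neq 0$, normalise $v = u/\|u\|_q$ so that $\int_\Omega |v|^q\,\mathrm{d} x = 1$; then, using $\|v\|_p \leq |\Omega|^{1/p - 1/q}\|v\|_q$, one gets $\rho_{\mathcal{H}}(v) \leq \|v\|_p^p + \|\mu\|_\infty \leq |\Omega|^{1 - p/q} + \|\mu\|_\infty =: K$. Choosing $\lambda = \max(1, K^{1/p}) \geq 1$ and exploiting $\lambda^{-p} \geq \lambda^{-q}$ yields $\rho_{\mathcal{H}}(v/\lambda) \leq K\lambda^{-p} \leq 1$, whence $\|v\|_{\mathcal{H}} \leq \lambda$ and, by homogeneity of the Luxemburg norm, $\|u\|_{\mathcal{H}} \leq \lambda\|u\|_q$. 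This is the continuous embedding $L^q(\Omega) \hookrightarrow L^{\mathcal{H}}(\Omega)$.

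Finally, (ii) is obtained by composing the gradient part of (i) with classical Sobolev theory. The continuous inclusion $W^{1,\mathcal{H}}_0(\Omega) \hookrightarrow W^{1,p}_0(\Omega)$ from (i), followed by the continuous Sobolev embedding $W^{1,p}_0(\Omega) \hookrightarrow L^{p^*}(\Omega)$ and H\"{o}lder's inequality on the bounded domain, gives $W^{1,\mathcal{H}}_0(\Omega) \hookrightarrow L^r(\Omega)$ continuously for all $r \in [1,p^*]$. For compactness when $r \in [1,p^*)$ I would invoke Rellich--Kondrachov: $W^{1,p}_0(\Omega) \hookrightarrow L^r(\Omega)$ is compact, and the composition of a bounded operator with a compact operator is compact, so $W^{1,\mathcal{H}}_0(\Omega) \hookrightarrow L^r(\Omega)$ is compact as well. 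I expect the only genuine subtlety to be the scaling step in (iv), where the mismatched $p$- and $q$-homogeneities of the modular must be reconciled; every other step is a direct consequence of the modular inequalities in Proposition \ref{proposition_modular_properties} and the classical embedding theorems.
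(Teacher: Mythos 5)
Your proof is correct and is essentially the standard argument: the paper itself gives no proof of this proposition, referring instead to Crespo-Blanco--Gasi\'{n}ski--Harjulehto--Winkert \cite{Crespo-Blanco-Gasinski-Harjulehto-Winkert-2022}, and your derivation --- the norm--modular comparison via Proposition \ref{proposition_modular_properties}(i) giving $\|u\|_p \leq \|u\|_{\mathcal{H}}$ and $\|u\|_{q,\mu} \leq \|u\|_{\mathcal{H}}$ for (i) and (iii), the rescaling with $\lambda = \max\{1, K^{1/p}\}$ that correctly exploits $\lambda^{-q} \leq \lambda^{-p}$ to reconcile the mismatched $p$- and $q$-homogeneities of the modular in (iv), and composition with the classical Sobolev and Rellich--Kondrachov embeddings for (ii) --- is precisely how these facts are established in that reference. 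The only step worth spelling out in (i) is that $u \in W^{1,\mathcal{H}}_0(\Omega)$ lands in $W^{1,r}_0(\Omega)$ rather than merely in $W^{1,r}(\Omega)$: since $W^{1,\mathcal{H}}_0(\Omega)$ is by definition the closure of $C^\infty_0(\Omega)$ in $\|\cdot\|_{1,\mathcal{H}}$ and your inequalities bound $\|\cdot\|_{1,r}$ by a multiple of $\|\cdot\|_{1,\mathcal{H}}$, any approximating sequence in $C^\infty_0(\Omega)$ converges in $W^{1,r}(\Omega)$ as well, so the limit belongs to the closure $W^{1,r}_0(\Omega)$; this is a one-line density remark, not a gap.
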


Let $A\colon  W^{1, \mathcal{H} }_0 ( \Omega )\to  W^{1, \mathcal{H} }_0 ( \Omega )^*$ be the nonlinear map defined by
\begin{align}
	\label{operator_representation}
	\langle A(u),v\rangle :=\int_\Omega \big(|\nabla u|^{p-2}\nabla u+\mu(x)|\nabla u|^{q-2}\nabla u \big)\cdot\nabla v \,\mathrm{d} x
\end{align}
for all $u,v\in W^{1, \mathcal{H} }_0 ( \Omega )$ with $\langle\,\cdot\,,\,\cdot\,\rangle$ being the duality pairing between $ W^{1, \mathcal{H} }_0 ( \Omega )$ and its dual space $ W^{1, \mathcal{H} }_0 ( \Omega )^*$.  The proof of the following proposition can be found in Crespo-Blanco-Gasi\'{n}ski-Harjulehto-Winkert \cite{Crespo-Blanco-Gasinski-Harjulehto-Winkert-2022}.

\begin{proposition}
	\label{properties_operator_double_phase}
	Let hypotheses \eqref{H1}  be satisfied. Then, the operator $A$ defined in \eqref{operator_representation} is bounded, continuous, strictly monotone and of type \textnormal{(S$_+$)}, that is,
	\begin{align*}
		u_n\rightharpoonup u \quad \text{in } W^{1, \mathcal{H} }_0 ( \Omega ) \quad\text{and}\quad  \limsup_{n\to\infty}\,\langle Au_n,u_n-u\rangle\le 0,
	\end{align*}
	imply $u_n\to u$ in $ W^{1, \mathcal{H} }_0 ( \Omega )$.
\end{proposition}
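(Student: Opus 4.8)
The plan is to verify the four properties separately, reserving most of the effort for the (S$_+$) property. Throughout I would rely on the elementary monotonicity inequality for the vector field $\xi\mapsto|\xi|^{s-2}\xi$ on $\R^N$ with $s>1$, namely that $(|\xi|^{s-2}\xi-|\eta|^{s-2}\eta)\cdot(\xi-\eta)\geq 0$ for all $\xi,\eta\in\R^N$ with equality if and only if $\xi=\eta$, together with Hölder's inequality and the modular relations from Proposition \ref{proposition_modular_properties}.

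For boundedness and continuity I would argue as follows. Writing the two summands in \eqref{operator_representation} separately and applying Hölder's inequality with exponents $(p,p')$ for the first integral and $(q,q')$ against the weight $\mu$ for the second, one estimates $|\langle A(u),v\rangle|$ by a constant times $\big(\|\nabla u\|_p^{p-1}+\|\nabla u\|_{q,\mu}^{q-1}\big)\|v\|$, where the embeddings in Proposition \ref{proposition_embeddings}(i),(iii) control the factor in $v$; combined with Proposition \ref{proposition_modular_properties} this shows that $A$ maps bounded sets to bounded sets. For continuity I would take $u_n\to u$ in $W^{1,\mathcal{H}}_0(\Omega)$, pass to a subsequence along which $\nabla u_n\to\nabla u$ pointwise a.e.\ with an $L^{\mathcal H}$-dominating function, invoke the continuity of the Nemytskii operators generated by $\xi\mapsto|\xi|^{p-2}\xi$ and $\xi\mapsto|\xi|^{q-2}\xi$, and conclude $A(u_n)\to A(u)$ in $W^{1,\mathcal{H}}_0(\Omega)^*$ by a standard subsequence-of-subsequence argument.

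Strict monotonicity is then immediate from the pointwise inequality above: for $u\neq v$ one has
\begin{align*}
	\langle A(u)-A(v),u-v\rangle
	&=\int_\Omega\big(|\nabla u|^{p-2}\nabla u-|\nabla v|^{p-2}\nabla v\big)\cdot(\nabla u-\nabla v)\,\mathrm{d} x\\
	&\quad+\int_\Omega\mu(x)\big(|\nabla u|^{q-2}\nabla u-|\nabla v|^{q-2}\nabla v\big)\cdot(\nabla u-\nabla v)\,\mathrm{d} x\geq 0,
\end{align*}
where both integrands are nonnegative a.e.\ (the second because $\mu\geq 0$), so equality forces $\nabla u=\nabla v$ a.e., whence $u=v$ in $W^{1,\mathcal{H}}_0(\Omega)$.

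The main obstacle is the (S$_+$) property. Given $u_n\rightharpoonup u$ with $\limsup_n\langle A(u_n),u_n-u\rangle\leq 0$, I would first use the compact embedding $W^{1,\mathcal{H}}_0(\Omega)\hookrightarrow L^r(\Omega)$ from Proposition \ref{proposition_embeddings}(ii) together with weak convergence to see that $\langle A(u),u_n-u\rangle\to 0$, so that $\limsup_n\langle A(u_n)-A(u),u_n-u\rangle\leq 0$. Since monotonicity gives $\langle A(u_n)-A(u),u_n-u\rangle\geq 0$, this quantity tends to $0$; as it is the sum of the two nonnegative integrals appearing in the monotonicity computation above, each of them tends to $0$ separately. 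From here I would extract a subsequence along which the nonnegative integrands converge to $0$ in $L^1(\Omega)$ and hence a.e., and use the strict pointwise monotonicity to deduce $\nabla u_n\to\nabla u$ pointwise a.e.; this is then upgraded to modular convergence $\rho_{\mathcal{H}}(\nabla u_n-\nabla u)\to 0$ by Vitali/Fatou-type arguments controlling the $p$- and the $\mu$-weighted $q$-contributions, after which Proposition \ref{proposition_modular_properties}(v) yields $\|\nabla u_n-\nabla u\|_{\mathcal{H}}\to 0$, i.e.\ $u_n\to u$ in $W^{1,\mathcal{H}}_0(\Omega)$. The delicate point throughout is handling the two different growth exponents $p$ and $q$ simultaneously together with the merely bounded, nonnegative weight $\mu$, which is precisely where the modular characterization of convergence in Proposition \ref{proposition_modular_properties} does the decisive work.
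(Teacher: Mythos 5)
Your proposal is essentially correct and follows the same standard Browder/Leray--Lions strategy as the proof the paper relies on: the paper itself does not prove Proposition \ref{properties_operator_double_phase} but quotes it from \cite{Crespo-Blanco-Gasinski-Harjulehto-Winkert-2022}, where exactly this route is taken (H\"older estimates and Nemytskii continuity for boundedness and continuity, pointwise monotonicity of $\xi\mapsto|\xi|^{s-2}\xi$ for strict monotonicity, and for \textnormal{(S$_+$)} the passage from $\langle A(u_n)-A(u),u_n-u\rangle\to 0$ to a.e.\ convergence of the gradients and then, via Young's inequality and a Brezis--Lieb/Vitali argument, to modular convergence $\rho_{\mathcal{H}}(\nabla u_n-\nabla u)\to 0$, closed by Proposition \ref{proposition_modular_properties}(v)). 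One cosmetic remark: $\langle A(u),u_n-u\rangle\to 0$ already follows from $u_n\rightharpoonup u$ alone, since $A(u)$ is a fixed element of $W^{1,\mathcal{H}}_0(\Omega)^*$, so the compact embedding is not needed at that step.
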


We refer to the book of Harjulehto-H\"{a}st\"{o} \cite{Harjulehto-Hasto-2019} as well as the papers of Colasuonno-Squassina \cite{Colasuonno-Squassina-2016}, Crespo-Blanco-Gasi\'{n}ski-Harjulehto-Winkert \cite{Crespo-Blanco-Gasinski-Harjulehto-Winkert-2022}, Perera-Squassina \cite{Perera-Squassina-2018} and Liu-Dai \cite{Liu-Dai-2018} for more information about Musielak-Orlicz Sobolev spaces and properties of double phase operators.

For $s \in \R$, we set $s^{\pm}=\max\{\pm s,0\}$ and for $u \in  W^{1, \mathcal{H} }_0 ( \Omega )$ we define $u^{\pm}(\cdot)=u(\cdot)^{\pm}$. We have
\begin{align*}
	u^{\pm} \in  W^{1, \mathcal{H} }_0 ( \Omega ), \quad |u|=u^++u^-, \quad u=u^+-u^-,
\end{align*}
see Crespo-Blanco-Gasi\'{n}ski-Harjulehto-Winkert \cite{Crespo-Blanco-Gasinski-Harjulehto-Winkert-2022}.

Let $X$ be a Banach space and $X^*$ its topological dual space. Given $\varphi\in C^1(X)$, we define
\begin{align*}
	K_\varphi=\left\{u\in X\colon \varphi'(u)=0\right\}
\end{align*}
being the critical set of $\varphi$. Moreover, we say that the functional $\varphi$ satisfies the Cerami condition or \textnormal{C}-condition if for every sequence $\{u_n\}_{n \in \N} \subseteq X$ such that $\{ \varphi(u_n) \}_{n \in \N} \subseteq \R$ is bounded and it also fulfills
\begin{align*}
	( 1 + \| u_n \| )\varphi'(u_n) \to 0 \quad\text{as } n \to \infty,
\end{align*}
then it contains a strongly convergent subsequence. Furthermore, we say that it satisfies the Cerami condition at the level $c \in \R$ or the \textnormal{C$_c$}-condition if it holds for all the sequences such that $\varphi (u_n) \to c$ as $n \to \infty$ instead of for all the bounded sequences.

The following version of the mountain pass theorem is taken from the book by Papageorgiou-R\u{a}dulescu-Repov\v{s} \cite[Theorem 5.4.6]{Papageorgiou-Radulescu-Repovs-2019}.

\begin{theorem}[Mountain pass theorem] \label{Th:MPT}
	Let X be a Banach space and suppose $\varphi \in C^1(X)$, $u_0, u_1 \in X$ with $\| u_1 - u_0 \| > \delta > 0$,
	\begin{align*}
		\max\{\varphi(u_0), \varphi(u_1)\} & \leq \inf\{\varphi(u) \colon \| u - u_0 \| = \delta \} = m_\delta, \\
		c = \inf_{ \gamma \in \Gamma} \max_{ 0 \leq t \leq 1}
		\varphi(\gamma (t)) \text{ with } \Gamma & = \{\gamma \in C([0, 1], X) \colon \gamma(0) = u_0, \gamma(1) = u_1\}
	\end{align*}
	and $\varphi$ satisfies the \textnormal{C$_c$}-condition. Then $c \geq m_\delta$ and $c$ is a critical value of $\varphi$. Moreover, if $c = m_\delta$, then there exists $u \in \partial B_\delta (u_0)$ such that $\varphi'(u) = 0$.
\end{theorem}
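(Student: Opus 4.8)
The plan is to run the classical variational scheme for the mountain pass theorem: first the geometric inequality $c \ge m_\delta$ by a crossing argument, and then the criticality of $c$ (and, in the borderline case, a critical point on the sphere) by contradiction with the quantitative deformation lemma. Throughout I write $\varphi^a=\{u\in X\colon \varphi(u)\le a\}$ for sublevel sets, $S=\partial B_\delta(u_0)$ for the separating sphere, and $S_r=\{u\in X\colon \operatorname{dist}(u,S)\le r\}$ for its closed tubes. To prove $c\ge m_\delta$, fix any $\gamma\in\Gamma$ and look at the continuous map $t\mapsto \|\gamma(t)-u_0\|$, which equals $0$ at $t=0$ and $\|u_1-u_0\|>\delta$ at $t=1$; by the intermediate value theorem there is $t_*\in(0,1)$ with $\gamma(t_*)\in S$, so $\max_t\varphi(\gamma(t))\ge\varphi(\gamma(t_*))\ge m_\delta$, and taking the infimum over $\Gamma$ gives $c\ge m_\delta$.

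For the case $c>m_\delta$ I would show that $c$ is a critical value by contradiction. If $K_\varphi\cap\varphi^{-1}(c)=\emptyset$, then the $\textnormal{C}_c$-condition forces $\varphi'$ to be bounded away from $0$ (in the Cerami sense) on a strip $\varphi^{-1}([c-2\varepsilon,c+2\varepsilon])$ for some small $\varepsilon$: otherwise one builds a Cerami sequence at level $c$ whose limit is a critical point at level $c$. Choosing $0<\varepsilon<(c-m_\delta)/2$ and invoking the quantitative deformation lemma yields $\eta$ with $\eta(1,\varphi^{c+\varepsilon})\subseteq\varphi^{c-\varepsilon}$ that fixes the set $\varphi^{c-2\varepsilon}$, hence fixes $u_0,u_1$ because $\varphi(u_0),\varphi(u_1)\le m_\delta<c-2\varepsilon$. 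Picking a near-optimal path $\gamma\in\Gamma$ with $\max_t\varphi(\gamma(t))\le c+\varepsilon$, the deformed path $\eta(1,\gamma(\cdot))$ lies in $\Gamma$ and has maximum value $\le c-\varepsilon<c$, contradicting the definition of $c$ as an infimum.

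The borderline case $c=m_\delta$ is the main obstacle, because now every admissible path crosses $S$, where $\varphi\ge m_\delta=c$, so no path can have maximum below $c$ and the previous contradiction with the minimax is unavailable; the contradiction must instead be with the infimum of $\varphi$ over $S$. Assuming $K_\varphi\cap\varphi^{-1}(c)\cap S=\emptyset$, the $\textnormal{C}_c$-condition gives (via convergence of the necessarily bounded Cerami sequences inside a tube) a whole tube $S_{2r}$ free of critical points at level $c$, and therefore a bound $\|\varphi'\|\ge b>0$ on $\varphi^{-1}([c-2\varepsilon,c+2\varepsilon])\cap S_{2r}$. The crucial refinement is to apply the quantitative deformation lemma localized to the tube $S_\sigma$, with spatial scale $\sigma$ small enough that $S_{3\sigma}$ stays inside $S_{2r}$ and omits $u_0,u_1$, and with $\varepsilon\le b\sigma/8$; this produces $\eta$ that pushes down a full neighborhood of the sphere, $\eta(1,\varphi^{c+\varepsilon}\cap S_\sigma)\subseteq\varphi^{c-\varepsilon}$, moves points by at most $\sigma$, and fixes everything outside $S_{3\sigma}$.

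With this tube deformation the argument closes. Taking a near-optimal $\gamma\in\Gamma$ with $\max_t\varphi(\gamma(t))\le c+\varepsilon$ and setting $\tilde\gamma=\eta(1,\gamma(\cdot))\in\Gamma$, the crossing argument gives $s_*$ with $\tilde\gamma(s_*)\in S$, so $\varphi(\tilde\gamma(s_*))\ge m_\delta=c$. On the other hand $\gamma(s_*)$ lies within $\sigma$ of $S$, hence in $\varphi^{c+\varepsilon}\cap S_\sigma$, so the tube push-down forces $\varphi(\tilde\gamma(s_*))=\varphi(\eta(1,\gamma(s_*)))\le c-\varepsilon$, a contradiction. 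Thus $K_\varphi\cap\varphi^{-1}(c)\cap S\neq\emptyset$, which at once supplies the desired $u\in\partial B_\delta(u_0)$ with $\varphi'(u)=0$ and shows $c$ is a critical value in this case too. The delicate point, and the reason one must deform the tube rather than the sphere alone, is precisely that deforming the path relocates its crossing point; driving down the entire neighborhood $S_\sigma$ is what guarantees the relocated crossing point is still pushed below $c$.
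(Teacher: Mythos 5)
The paper itself offers no proof of Theorem \ref{Th:MPT}: it is quoted verbatim from Papageorgiou--R\u{a}dulescu--Repov\v{s} [Theorem 5.4.6], so your attempt must be measured against the standard textbook argument, which your outline largely reconstructs. Your first step ($c \geq m_\delta$ via the intermediate value theorem applied to $t \mapsto \|\gamma(t)-u_0\|$) is correct, and your treatment of the borderline case $c = m_\delta$ is also correct and is essentially the textbook route: crucially, the set you localize to there, a tube around the sphere $\partial B_\delta(u_0)$, is \emph{bounded}, so on it the Cerami-type lower bound $(1+\|u\|)\,\|\varphi'(u)\|_* \geq b$ obtained from negating the conclusion genuinely implies the unweighted uniform bound $\|\varphi'\|_* \geq b' > 0$ that Lemma \ref{Le:DeformationLemma} requires, and your displacement-at-most-$\sigma$/relocated-crossing-point argument then closes the contradiction cleanly.

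The genuine gap is in the case $c > m_\delta$. There you invoke the quantitative deformation lemma on the full strip $\varphi^{-1}([c-2\varepsilon, c+2\varepsilon])$ (with the localizing set effectively all of $X$), and Lemma \ref{Le:DeformationLemma} then demands $\inf \|\varphi'\|_* \geq 8\varepsilon/\delta$ over that strip. But negating ``$c$ is a critical value'' under the C$_c$-condition only yields the \emph{weighted} bound $(1+\|u\|)\,\|\varphi'(u)\|_* \geq b$, i.e.\ $\|\varphi'(u)\|_* \geq b/(1+\|u\|)$, whose infimum over an unbounded strip may well be zero --- this is precisely the degeneracy the Cerami condition is designed to tolerate, and the distinction is the entire reason the theorem is stated with C$_c$ rather than Palais--Smale (indeed, the paper only verifies the C-condition for its functionals). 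Nor can you localize to a bounded set as in the borderline case: near-optimal paths must cross the sphere, but only at level $\geq m_\delta$; their \emph{maxima}, the points that must be pushed below $c$, may run off to infinity as the paths vary, so no fixed bounded tube captures them. The standard repairs are either a deformation lemma adapted to the Cerami condition, in which the pseudo-gradient flow is rescaled by the factor $1+\|u\|$ (Bartolo--Benci--Fortunato; this is the mechanism behind the result cited by the paper), noting that the resulting deformation loses the uniformly small displacement property (iv) but the global minimax argument does not need it; or, alternatively, producing a Cerami sequence at level $c$ directly from the minimax geometry via Ekeland's variational principle and then applying the C$_c$-condition. As written, your appeal to Willem's lemma in the unbounded-strip case does not go through.
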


The proof of the following quantitative deformation lemma can be found in the book by Willem \cite[Lemma 2.3]{Willem-1996}.

\begin{lemma}[Quantitative deformation lemma] \label{Le:DeformationLemma}
	Let $X$ be a Banach space, $\varphi \in C^1(X;\R)$, $\emptyset \neq S \subseteq X$, $c \in \R$, $\varepsilon,\delta > 0$ such that for all $u \in \varphi^{-1}([c - 2\varepsilon, c + 2\varepsilon]) \cap S_{2 \delta}$ there holds $\| \varphi'(u) \|_* \geq 8\varepsilon / \delta$, where $S_{r} = \{ u \in X \colon d(u,S) = \inf_{u_0 \in S} \| u - u_0 \| < r \}$ for any $r > 0$.
	Then there exists $\eta \in C([0, 1] \times X; X)$ such that
	\begin{enumerate} [label=(\roman*),font=\normalfont]
		\item[\textnormal{(i)}]
			$\eta (t, u) = u$, if $t = 0$ or if $u \notin \varphi^{-1}([c - 2\varepsilon, c + 2\varepsilon]) \cap S_{2 \delta}$;
		\item[\textnormal{(ii)}]
			$\varphi( \eta( 1, u ) ) \leq c - \varepsilon$ for all $u \in \varphi^{-1} ( ( - \infty, c + \varepsilon] ) \cap S $;
		\item[\textnormal{(iii)}]
			$\eta(t, \cdot )$ is an homeomorphism of $X$ for all $t \in [0,1]$;
		\item[\textnormal{(iv)}]
			$\| \eta(t, u) - u \| \leq \delta$ for all $u \in X$ and $t \in [0,1]$;
		\item[\textnormal{(v)}]
			$\varphi( \eta( \cdot , u))$ is decreasing for all $u \in X$;
		\item[\textnormal{(vi)}]
			$\varphi(\eta(t, u)) < c$ for all $u \in \varphi^{-1} ( ( - \infty, c] ) \cap S_\delta$ and $t \in (0, 1]$.
	\end{enumerate}
\end{lemma}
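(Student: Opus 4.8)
\section*{Proof proposal}

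The plan is to realize $\eta$ as the (rescaled) flow of a bounded, locally Lipschitz, cut-off pseudo-gradient vector field, the classical construction from Willem's monograph. Abbreviate the two relevant regions by
\[
A=\varphi^{-1}([c-2\varepsilon,c+2\varepsilon])\cap S_{2\delta},\qquad B=\varphi^{-1}([c-\varepsilon,c+\varepsilon])\cap\overline{S_{\delta}},
\]
so that $B\subseteq A$ and, by the standing hypothesis, $\|\varphi'(u)\|_*\geq 8\varepsilon/\delta>0$ for every $u\in A$. Since $\varphi\in C^1(X;\R)$, on the open set $\{\varphi'\neq 0\}$ there exists a locally Lipschitz pseudo-gradient field $V$ with $\|V(u)\|\leq 2\|\varphi'(u)\|_*$ and $\langle\varphi'(u),V(u)\rangle\geq\|\varphi'(u)\|_*^2$. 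In particular $\|V(u)\|\geq\|\varphi'(u)\|_*\geq 8\varepsilon/\delta$ on $A$, so the normalized field $w=V/\|V\|$ is well defined and locally Lipschitz on a neighborhood of $A$.

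Next I introduce the locally Lipschitz cutoff $\rho(u)=d(u,X\setminus A)/\bigl(d(u,X\setminus A)+d(u,B)\bigr)$, which equals $1$ on $B$ (here one checks $B\subseteq\mathrm{int}(A)$ using the buffers $\varepsilon$ versus $2\varepsilon$ and $\delta$ versus $2\delta$) and vanishes off $A$, and set $W(u)=-\rho(u)\,w(u)$, extended by $0$ outside $A$. Then $\|W\|\leq 1$ everywhere and $W$ is locally Lipschitz on all of $X$, the only delicate points lying on $\partial A$, where $\rho$ vanishes Lipschitz-continuously while $w$ stays bounded. By the Cauchy–Lipschitz theorem, boundedness and local Lipschitz continuity yield a unique global flow $\sigma$ solving $\tfrac{d}{dt}\sigma(t,u)=W(\sigma(t,u))$, $\sigma(0,u)=u$, jointly continuous in $(t,u)$, with each $\sigma(t,\cdot)$ a homeomorphism (inverse $\sigma(-t,\cdot)$). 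Define $\eta(t,u)=\sigma(\delta t,u)$ for $(t,u)\in[0,1]\times X$. Then (i) and (iii) are immediate: points off $A$ satisfy $W=0$ and are fixed by uniqueness, and $\eta(t,\cdot)$ inherits the homeomorphism property; moreover $\|\eta(t,u)-u\|=\|\sigma(\delta t,u)-u\|\leq\delta t\leq\delta$, which is (iv).

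The core is the energy estimate. Writing $\sigma=\sigma(t,u)$ for brevity,
\[
\frac{d}{dt}\varphi(\sigma)=-\rho(\sigma)\,\frac{\langle\varphi'(\sigma),V(\sigma)\rangle}{\|V(\sigma)\|}\leq-\frac{\rho(\sigma)}{2}\,\|\varphi'(\sigma)\|_*\leq 0,
\]
which already gives (v). Whenever $\sigma\in B$ we have $\rho(\sigma)=1$ and $\|\varphi'(\sigma)\|_*\geq 8\varepsilon/\delta$, so the rate is $\leq-4\varepsilon/\delta$. For (ii), take $u\in\varphi^{-1}((-\infty,c+\varepsilon])\cap S$; if $\varphi(u)\leq c-\varepsilon$ monotonicity finishes it, otherwise assume for contradiction $\varphi(\sigma(\delta,u))>c-\varepsilon$. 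Since $u\in S$ and $\|\sigma(t,u)-u\|\leq t\leq\delta$, the whole arc stays in $\overline{S_\delta}$, and monotonicity keeps its energy in $(c-\varepsilon,c+\varepsilon]$, hence the arc lies in $B$; integrating the rate $-4\varepsilon/\delta$ over $[0,\delta]$ forces $\varphi(\sigma(\delta,u))\leq\varphi(u)-4\varepsilon\leq c-3\varepsilon$, a contradiction. Finally (vi): for $u\in\varphi^{-1}((-\infty,c])\cap S_\delta$ with $\varphi(u)=c$ one has $u\in B$, so the initial rate is $\leq-4\varepsilon/\delta<0$ and monotonicity yields $\varphi(\eta(t,u))<c$ for $t\in(0,1]$; if $\varphi(u)<c$, monotonicity alone suffices.

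The step I expect to demand the most care is showing that $W$ is genuinely locally Lipschitz on all of $X$ together with the trapping in (ii): the pseudo-gradient degenerates precisely where $\rho$ switches off, so the matching across $\partial A$ must be handled through the Lipschitz vanishing of $\rho$ against the bounded factor $w$, and the trapping hinges on the quantitative buffers being respected by the sharp displacement bound $\|\sigma(t,u)-u\|\leq t$. Once these are secured, all six properties follow from the monotonicity and rate estimates above.
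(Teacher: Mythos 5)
Your proposal is correct and is essentially the paper's own route: the paper gives no proof of this lemma but cites Willem's \emph{Minimax Theorems} (Lemma 2.3), and your argument is precisely that classical construction — a normalized, locally Lipschitz pseudo-gradient field cut off between $B=\varphi^{-1}([c-\varepsilon,c+\varepsilon])\cap\overline{S_\delta}$ and $A=\varphi^{-1}([c-2\varepsilon,c+2\varepsilon])\cap S_{2\delta}$, the global flow $\sigma$ rescaled to $\eta(t,u)=\sigma(\delta t,u)$, and the rate estimate $\tfrac{d}{dt}\varphi(\sigma)\leq-\tfrac{\rho(\sigma)}{2}\|\varphi'(\sigma)\|_*\leq-4\varepsilon/\delta$ on $B$ combined with the displacement bound $\|\sigma(t,u)-u\|\leq t$ for the trapping argument. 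Your handling of the two delicate points (that $B\subseteq\mathrm{int}(A)$ via the $\varepsilon$ versus $2\varepsilon$ and $\delta$ versus $2\delta$ buffers, and that $W=-\rho\,V/\|V\|$ is locally Lipschitz across $\partial A$, which works because continuity of $\varphi'$ forces $\|\varphi'\|_*\geq 8\varepsilon/\delta$ on all of $\overline{A}$) matches the standard treatment, so all six properties follow as you state.
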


A $N$-dimensional version of the Bolzano intermediate value theorem can be stated by the following Poincar\'{e}-Miranda existence theorem, see the book by Dinca-Mawhin \cite[Corollary 2.2.15]{Dinca-Mawhin-2021}.

\begin{theorem}[Poincar\'{e}-Miranda existence theorem]
	\label{theorem-poincare-miranda}
	Let $P = [-t_1, t_1] \times \cdots \times [-t_N, t_N]$ with $t_i > 0$ for $i \in {1, \ldots, N}$ and $d \colon P \to \R^N$ be continuous. If for each $i \in \{1, \ldots, N\}$ one has
	\begin{align*}
		\begin{aligned}
			d_i (a) & \leq 0 \quad\text{when } a \in P \text{ and } a_i = -t_i, \\
			d_i (a) & \geq 0 \quad\text{when } a \in P \text{ and } a_i = t_i,
		\end{aligned}
	\end{align*}
	then $d$ has at least one zero point in $P$.
\end{theorem}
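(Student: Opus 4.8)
The plan is to deduce the theorem from Brouwer's fixed point theorem by encoding the boundary sign conditions directly into a suitably truncated self-map of the cube. Since each $t_i>0$, the cube $P$ is a compact convex subset of $\R^N$ with nonempty interior, hence homeomorphic to a closed ball, so Brouwer's theorem applies to any continuous map of $P$ into itself. The whole difficulty is to manufacture such a self-map whose fixed points are forced to be zeros of $d$.

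First I would define $T\colon P\to\R^N$ componentwise by
\[
T_i(a)=\max\{-t_i,\min\{t_i,\,a_i-d_i(a)\}\},\qquad i=1,\dots,N,
\]
i.e.\ $T_i(a)$ is the clamping of $a_i-d_i(a)$ to the interval $[-t_i,t_i]$. Because $d$ is continuous, each map $a\mapsto a_i-d_i(a)$ is continuous, and the pointwise maximum and minimum of continuous functions are continuous; thus $T$ is continuous. By construction $T_i(a)\in[-t_i,t_i]$ for every $i$, so $T$ maps $P$ into itself. Brouwer's fixed point theorem then yields a point $a^{*}\in P$ with $T(a^{*})=a^{*}$.

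The heart of the argument is to read off $d(a^{*})=0$ from the fixed point relation together with the hypotheses, and I would do this coordinatewise. Fix $i$ and set $b=a_i^{*}-d_i(a^{*})$, so that $a_i^{*}=\max\{-t_i,\min\{t_i,b\}\}$. If $b\in[-t_i,t_i]$, then $a_i^{*}=b$, which forces $d_i(a^{*})=0$. If instead $b>t_i$, the clamping gives $a_i^{*}=t_i$; since $a^{*}$ then lies on the face $a_i=t_i$, the hypothesis yields $d_i(a^{*})\ge 0$, whence $b=t_i-d_i(a^{*})\le t_i$, contradicting $b>t_i$. Symmetrically, $b<-t_i$ forces $a_i^{*}=-t_i$, so the hypothesis $d_i(a^{*})\le 0$ on that face gives $b=-t_i-d_i(a^{*})\ge -t_i$, again a contradiction. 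Hence only the first alternative survives and $d_i(a^{*})=0$; as $i$ was arbitrary, $d(a^{*})=0$ and $a^{*}$ is the sought zero.

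The single nontrivial input is Brouwer's fixed point theorem; everything else is elementary, so I do not expect a genuine obstacle beyond careful bookkeeping of the strict versus non-strict inequalities where faces overlap. As an alternative I would mention a purely degree-theoretic route: assuming $d\neq 0$ on $\partial P$ (otherwise a boundary zero already proves the claim), the linear homotopy $H(a,\tau)=\tau\,d(a)+(1-\tau)\,a$ is nonvanishing on $\partial P$ for all $\tau\in[0,1]$, because the sign conditions give $H_i>0$ on $\{a_i=t_i\}$ and $H_i<0$ on $\{a_i=-t_i\}$ for $\tau<1$, while $H=d\neq 0$ at $\tau=1$; homotopy invariance of the Brouwer degree then gives $\deg(d,\operatorname{int}P,0)=\deg(\operatorname{id},\operatorname{int}P,0)=1\neq 0$, and the solution property of the degree produces an interior zero. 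Either way the conclusion is a zero in the closed cube $P$, which is exactly what is claimed.
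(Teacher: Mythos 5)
Your proof is correct. A point of comparison: the paper does not actually prove this statement --- it is quoted as-is from the book of Dinca--Mawhin \cite{Dinca-Mawhin-2021}, where it is derived from Brouwer degree theory, which is essentially your second, degree-theoretic route (linear homotopy $H(a,\tau)=\tau d(a)+(1-\tau)a$ to the identity, admissibility on $\partial P$ from the sign conditions on the faces, and $\deg(\operatorname{id},\operatorname{int}P,0)=1$ because $t_i>0$ puts $0$ in the interior of the cube). Your primary argument via the truncated map $T_i(a)=\max\{-t_i,\min\{t_i,\,a_i-d_i(a)\}\}$ is a genuinely different and more elementary reduction: it needs only Brouwer's fixed point theorem rather than the degree apparatus, and the coordinatewise trichotomy at a fixed point is exhaustive and airtight --- if $b=a_i^*-d_i(a^*)$ lies in $[-t_i,t_i]$ then $d_i(a^*)=0$ directly, while $b>t_i$ forces $a_i^*=t_i$, hence $d_i(a^*)\ge 0$ and $b\le t_i$, a contradiction, and symmetrically for $b<-t_i$; overlapping faces at corners cause no trouble since each index is handled independently. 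Your degree variant is also correctly bookkept: for $\tau<1$ the homotopy is nonvanishing on $\partial P$ because every boundary point lies on some face $a_i=\pm t_i$ where $H_i$ has a strict sign, and the separate assumption $d\neq 0$ on $\partial P$ (otherwise a boundary zero finishes the proof) covers $\tau=1$. The two routes buy slightly different things: the fixed-point version is self-contained and minimal in its prerequisites, whereas the degree version yields the stronger conclusions that the zero may be taken in the open cube and that $\deg(d,\operatorname{int}P,0)=1$, which is more than the statement requires and is the form in which the cited reference establishes it.
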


%

\section{Constant sign solutions}\label{Section_3}

In this section we are going to prove the existence of constant sign solutions for problem \eqref{problem}. To this end, let $\varphi\colon  W^{1, \mathcal{H} }_0 ( \Omega ) \to \R$ be the associated energy functional  to problem \eqref{problem} defined by
\begin{align*}
	\varphi(u) = \Psi[\Phi_{\mathcal{H}}(\nabla u)] -\int_\Omega F(x,u)\,\mathrm{d} x,
\end{align*}
where $\Psi\colon  [0,\infty) \to [0,\infty)$ is given by
\begin{align*}
	\Psi(s)= \int_0^s\psi(t)\,\mathrm{d} t= a_0s  +\frac{b_0}{\vartheta}s^\vartheta.
\end{align*}
Moreover, we consider the positive and negative truncations of the energy functional $\varphi$, that is, $\varphi_{\pm}\colon  W^{1, \mathcal{H} }_0 ( \Omega )  \to \R$ defined by
\begin{align*}
	\varphi_{\pm}(u)
	 & =\Psi[\Phi_{\mathcal{H}}(\nabla u)] -\int_\Omega F(x,\pm u^{\pm})\,\mathrm{d} x.
\end{align*}

First, we prove that these truncated functionals satisfy the Cerami condition.

\begin{proposition}
	\label{proposition_cerami}
	Let hypotheses \eqref{H1}--\eqref{H3} be satisfied. Then the functionals $\varphi_{\pm}$ fulfill the Cerami condition.
\end{proposition}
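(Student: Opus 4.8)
By symmetry it suffices to treat $\varphi_+$, since the argument for $\varphi_-$ is identical after exchanging $u^+$ and $u^-$. I would fix a sequence $\{u_n\}_{n\in\N}\subseteq W^{1,\mathcal H}_0(\Omega)$ with $\{\varphi_+(u_n)\}$ bounded and $(1+\|u_n\|)\varphi_+'(u_n)\to 0$ in the dual space, and split the proof into two parts: first establishing that $\{u_n\}$ is bounded, and then upgrading the resulting weak limit to a strong one by means of the \textnormal{(S$_+$)}-property in Proposition \ref{properties_operator_double_phase}.

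For the boundedness I would argue by contradiction and assume $\|u_n\|\to\infty$ along a subsequence. Testing $\varphi_+'(u_n)$ with $-u_n^-$ makes the reaction term drop out and leaves $\psi(\Phi_{\mathcal H}(\nabla u_n))\,\rho_{\mathcal H}(\nabla u_n^-)=\l\langle\varphi_+'(u_n),-u_n^-\r\rangle\to 0$; as $\|u_n\|\to\infty$ forces $\Phi_{\mathcal H}(\nabla u_n)\to\infty$ (Proposition \ref{proposition_modular_properties}(vi)) and hence keeps $\psi(\Phi_{\mathcal H}(\nabla u_n))$ bounded away from $0$, I obtain $\|u_n^-\|\to 0$. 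Setting $y_n=u_n/\|u_n\|$ with $\|y_n\|=1$, I would pass to a subsequence with $y_n\rightharpoonup y$ in $W^{1,\mathcal H}_0(\Omega)$ and, by the compact embedding of Proposition \ref{proposition_embeddings}(ii) (this is where $q\vartheta<p^*$ from \eqref{H2} enters), $y_n\to y$ in $L^{q\vartheta}(\Omega)$ and a.e., while $\|u_n^-\|\to 0$ gives $y_n^-\to 0$, so $y\ge 0$. If $|\{y>0\}|>0$, then $u_n^+(x)=\|u_n\|\,y_n(x)\to+\infty$ a.e.\ on $\{y>0\}$, so \eqref{superlinear-F} together with the crude modular bound $\Psi[\Phi_{\mathcal H}(\nabla u_n)]\le C\|u_n\|^{q\vartheta}$ (from Proposition \ref{proposition_modular_properties}(iv)) yields $\int_\Omega F(x,u_n^+)\,\mathrm dx\big/\|u_n\|^{q\vartheta}\to+\infty$, contradicting $\int_\Omega F(x,u_n^+)\,\mathrm dx=\Psi[\Phi_{\mathcal H}(\nabla u_n)]-\varphi_+(u_n)\le C\|u_n\|^{q\vartheta}+O(1)$.

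The hard part is the remaining case $y=0$, where the combined Kirchhoff and double phase inhomogeneity makes the usual Nehari-type scaling break down. I would treat it with a maximization argument. For each $n$ choose $t_n\in[0,1]$ with $\varphi_+(t_nu_n)=\max_{t\in[0,1]}\varphi_+(tu_n)$. Given $m>1$, for large $n$ one has $m/\|u_n\|\in[0,1]$, hence $\varphi_+(t_nu_n)\ge\varphi_+(my_n)$; since $y_n\to 0$ in every $L^r(\Omega)$ with $r<p^*$ one gets $\int_\Omega F(x,(my_n)^+)\,\mathrm dx\to 0$, whereas $\|\nabla(my_n)\|_{\mathcal H}=m$ and Proposition \ref{proposition_modular_properties}(iv) give $\Psi[\Phi_{\mathcal H}(\nabla(my_n))]\ge\frac{b_0}{\vartheta}(m^p/q)^\vartheta$, so letting first $n\to\infty$ and then $m\to\infty$ forces $\varphi_+(t_nu_n)\to+\infty$. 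To contradict this I would consider
\begin{align*}
	g_n(t)=\varphi_+(tu_n)-\frac{1}{q\vartheta}\l\langle\varphi_+'(tu_n),tu_n\r\rangle
\end{align*}
and show it is nondecreasing in $t\in[0,1]$. Indeed, $g_n$ is the sum of $\frac{1}{q\vartheta}\int_\Omega\big[f(x,(tu_n)^+)(tu_n)^+-q\vartheta F(x,(tu_n)^+)\big]\,\mathrm dx$, nondecreasing by \eqref{H3}\eqref{H3iv}, and of $\Psi[\Phi_{\mathcal H}(\nabla(tu_n))]-\frac{1}{q\vartheta}\psi[\Phi_{\mathcal H}(\nabla(tu_n))]\,\rho_{\mathcal H}(\nabla(tu_n))$, which after inserting $\Psi,\psi$ and using $\Phi_{\mathcal H}(\nabla(tu_n))-\frac1q\rho_{\mathcal H}(\nabla(tu_n))=(\frac1p-\frac1q)t^p\|\nabla u_n\|_p^p\ge 0$ and $\Phi_{\mathcal H}(\nabla(tu_n))-\frac1{q\vartheta}\rho_{\mathcal H}(\nabla(tu_n))\ge 0$ becomes a sum of nonnegative, nondecreasing functions of $t$. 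Since $t_n\in\{0,1\}$ would keep $\varphi_+(t_nu_n)\in\{0,\varphi_+(u_n)\}$ bounded, for large $n$ the maximum is interior and $\l\langle\varphi_+'(t_nu_n),t_nu_n\r\rangle=0$, whence $\varphi_+(t_nu_n)=g_n(t_n)\le g_n(1)=\varphi_+(u_n)-\frac{1}{q\vartheta}\l\langle\varphi_+'(u_n),u_n\r\rangle\le O(1)$, the desired contradiction. Thus $\{u_n\}$ is bounded.

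Finally, with $\{u_n\}$ bounded I would pass to $u_n\rightharpoonup u$ and $u_n\to u$ in $L^r(\Omega)$ for $r<p^*$. By the growth condition \eqref{H3}\eqref{H3i} one has $\int_\Omega f(x,u_n^+)(u_n-u)\,\mathrm dx\to 0$, and since $\l\langle\varphi_+'(u_n),u_n-u\r\rangle\to 0$ this leaves $\psi[\Phi_{\mathcal H}(\nabla u_n)]\,\l\langle A(u_n),u_n-u\r\rangle\to 0$. Passing to a further subsequence with $\Phi_{\mathcal H}(\nabla u_n)\to c_*\ge 0$: if $c_*=0$ then $\|u_n\|\to 0$ and $u_n\to u=0$ strongly, while if $c_*>0$ then $\psi[\Phi_{\mathcal H}(\nabla u_n)]\to\psi(c_*)>0$, so $\l\langle A(u_n),u_n-u\r\rangle\to 0$ and the \textnormal{(S$_+$)}-property of Proposition \ref{properties_operator_double_phase} gives $u_n\to u$ in $W^{1,\mathcal H}_0(\Omega)$. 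In either case a strongly convergent subsequence exists, so $\varphi_+$ satisfies the Cerami condition. As indicated, the main obstacle is precisely the monotonicity of $g_n$ that drives the case $y=0$.
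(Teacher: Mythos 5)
Your argument is correct and follows essentially the same route as the paper: test with $-u_n^-$ to kill the negative part, normalize and split on whether the weak limit of $y_n$ vanishes, use Fatou with \eqref{superlinear-F} in the nonzero case and the interior maximizer $t_n$ together with \eqref{H3}\eqref{H3iv} in the zero case, and finish with the \textnormal{(S$_+$)}-property while treating the possible degeneracy of $\psi$ exactly as the paper does. The only difference is presentational: you package the decisive estimate as the monotonicity of $g_n(t)=\varphi_+(tu_n)-\frac{1}{q\vartheta}\langle\varphi_+'(tu_n),tu_n\rangle$, which is precisely the term-by-term comparison the paper carries out inline between $t_nu_n^+$ and $u_n$ via \eqref{c7-}.
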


\begin{proof}
	We only show the proof of $\varphi_+$, the proof for $\varphi_-$ can be done similarly. To this end, let $\{u_n\}_{n \geq 1} \subseteq  W^{1, \mathcal{H} }_0 ( \Omega ) $ be a sequence such that
	\begin{align}
		\label{c1}
		\left|\varphi_+(u_n)\right| \leq M_1 \quad \text{for some }M_1>0 \text{ and for all }n\in\N
	\end{align}
	and
	\begin{align}
		\label{c2}
		\left(1+\|u_n\|\right)\varphi_+'(u_n)\to 0 \quad\text{in } W^{1, \mathcal{H} }_0 ( \Omega )^*.
	\end{align}
	From \eqref{c2} we have
	\begin{equation}
		\label{c3}
		\begin{aligned}
			&\Bigg| \psi(\Phi_\mathcal{H}(\nabla u_n))
			\int_\Omega \left(|\nabla u_n|^{p-2}\nabla u_n + \mu(x) |\nabla u_n|^{q-2}\nabla u_n \right)\cdot \nabla v\,\mathrm{d} x\\
			&-\int_\Omega f\l(x,u_n^+\r)v\,\mathrm{d} x \Bigg| \leq \frac{\varepsilon_n\|v\|}{1+\|u_n\|}
		\end{aligned}
	\end{equation}
	for all $v \in  W^{1, \mathcal{H} }_0 ( \Omega ) $ with $\varepsilon_n \to 0^+$. We choose $v=-u_n^- \in  W^{1, \mathcal{H} }_0 ( \Omega ) $ in \eqref{c3} to get
	\begin{align*}
		\psi(\Phi_\mathcal{H}(\nabla u_n)) \l( \l\|\nabla u_n^-\r\|_p^p+\l\|\nabla u_n^-\r\|_{q,\mu}^q \r)\leq \varepsilon_n\quad\text{for all }n\in\N
	\end{align*}
	as $f(\cdot,0)=0$ due to \eqref{H3}\eqref{H3iii}. Since
	\begin{equation*}
		\psi(\Phi_{\mathcal{H}} (\nabla u_n)) \geq \frac{b_0}{q^{\vartheta-1}} [ \rho_{\mathcal{H}} (\nabla u_n) ]^{\vartheta-1} \geq \frac{b_0}{q^{\vartheta-1}} [ \rho_{\mathcal{H}} (\nabla u_n^-) ]^{\vartheta-1},
	\end{equation*}
	see \eqref{H2}, we conclude that $\rho_\mathcal{H}(\nabla u_n^-)\to 0$ as $n \to \infty$. But this is equivalent to
	\begin{align*}
		\l\|u_n^-\r\| \to 0\quad\text{as }n\to \infty,
	\end{align*}
	by Proposition \ref{proposition_modular_properties}(v), that is,
	\begin{align}
		\label{c4}
		u_n^- \to 0 \quad\text{in } W^{1, \mathcal{H} }_0 ( \Omega ).
	\end{align}
	From \eqref{c1} we obtain
	\begin{equation}
		\label{c5-}
		\begin{aligned}
			&q\vartheta \Psi[\Phi_{\mathcal{H}}(\nabla u_n)] -\int_\Omega q\vartheta F\l(x, u_n^+\r)\,\mathrm{d} x
			\leq M_2 \quad\text{for all }n\in\N
		\end{aligned}
	\end{equation}
	for some $M_2>0$. Inequality \eqref{c5-}  can be rewritten as
	\begin{equation}
		\label{c5}
		\begin{aligned}
			&q\vartheta a_0\l(\frac{1}{p}\|\nabla u_n\|_{p}^p+\frac{1}{q}\|\nabla u_n\|_{q,\mu}^q\r)
			+q b_0 \l(\frac{1}{p}\|\nabla u_n\|_{p}^p+\frac{1}{q}\|\nabla u_n\|_{q,\mu}^q\r)^\vartheta\\
			&-\int_\Omega q\vartheta F\l(x, u_n^+\r)\,\mathrm{d} x
			\leq M_2 \quad\text{for all }n\in\N.
		\end{aligned}
	\end{equation}
	Next, we take $v=u_n \in  W^{1, \mathcal{H} }_0 ( \Omega ) $ in \eqref{c3} and use again the fact that $f(\cdot,0)=0$ due to \eqref{H3}\eqref{H3iii}, which gives
	\begin{align*}
		-\psi(\Phi_\mathcal{H}(\nabla u_n)) \l( \l\|\nabla u_n\r\|_p^p+\l\|\nabla u_n\r\|_{q,\mu}^q \r)+\int_\Omega f\l(x,u_n^+\r)u_n^+\,\mathrm{d} x\leq \varepsilon_n
	\end{align*}
	for all $n\in\N$, or equivalently
	\begin{equation}
		\label{c6-}
		\begin{aligned}
			& -a_0\l(\l\|\nabla u_n\r\|_p^p+\l\|\nabla u_n\r\|_{q,\mu}^q\r)\\
			&-b_0\l(\frac{1}{p}\l\|\nabla u_n\r\|_p^p+\frac{1}{q}\l\|\nabla u_n\r\|_{q,\mu}^q\r)^{\vartheta-1}
			\l( \l\|\nabla u_n\r\|_p^p+\l\|\nabla u_n\r\|_{q,\mu}^q \r)\\
			&+\int_\Omega f\l(x,u_n^+\r)u_n^+\,\mathrm{d} x
			\leq \varepsilon_n \quad\text{for all }n\in\N.
		\end{aligned}
	\end{equation}
	Now we add \eqref{c5} and \eqref{c6-} to get
	\begin{equation}
		\label{c7-}
		\begin{aligned}
			&a_0\left(\frac{q\vartheta}{p}-1\right)\l\|\nabla u_n\r\|_p^p +a_0\left(\vartheta-1\right)\l\| \nabla u_n\r\|_{q,\mu}^q\\
			&+b_0\l(\frac{1}{p}\l\|\nabla u_n\r\|_p^p+\frac{1}{q}\l\|\nabla u_n\r\|_{q,\mu}^q\r)^{\vartheta-1} \l[\l(\frac{q}{p}-1\r) \l\|\nabla u_n\r\|_p^p\r]\\
			&+\int_\Omega \left(f\l(x,u_n^+\r)u_n^+- q\vartheta F\l(x, u_n^+\r)\right)\,\mathrm{d} x \leq M_3 \quad\text{for all }n\in\N.
		\end{aligned}
	\end{equation}

	{\bf Claim: } The sequence $\{u_n^+\}_{n \geq 1} \subseteq  W^{1, \mathcal{H} }_0 ( \Omega ) $ is bounded.

	We argue indirectly and assume, by passing to a subsequence if necessary, that
	\begin{align}
		\label{c7c}
		\l\|u_n^+\r\| \to +\infty \quad\text{as }n\to +\infty.
	\end{align}
	Let $y_n=\frac{u_n^+}{\l\|u_n^+\r\|}$ for $n\in \N$. Then we have $\|y_n\|=1$ and $y_n \geq 0$ for all $n \in \N$. Hence,  we may suppose that
	\begin{align}
		\label{c8}
		y_n \rightharpoonup y \quad \text{in } W^{1, \mathcal{H} }_0 ( \Omega )
		\quad\text{and}\quad
		y_n \to y \quad\text{in }L^{r}(\Omega)
	\end{align}
	with $y\in  W^{1, \mathcal{H} }_0 ( \Omega )$, $y\geq 0$,  see Proposition \ref{proposition_embeddings}(iii).

	{\bf Case 1:} $y \neq 0$.

	Defining $\Omega_+=\left\{x\in \Omega \colon y(x)>0 \right\}$, we have $|\Omega_+|_N>0$ and due to \eqref{c8} it follows that
	\begin{align*}
		u_n^+(x) \to +\infty \quad \text{for a.a.\,}x\in\Omega_+.
	\end{align*}
	Then, from Fatou's Lemma, \eqref{c7c} and   \eqref{superlinear-F} we obtain
	\begin{align}
		\label{c10}
		\int_{\Omega_+} \frac{F(x,u_n^+)}{\l\|u_n^+\r\|^{q\vartheta}}\,\mathrm{d} x\to +\infty.
	\end{align}
	From \eqref{H3}\eqref{H3i} and \eqref{H3}\eqref{H3ii} we have
	\begin{align}
		\label{c11}
		F(x,s) \geq -M_4  \quad \text{for a.a.\,}x\in\Omega, \text{ for all }s\in\R,
	\end{align}
	and for some $M_4>0$. Applying \eqref{c11} gives
	\begin{align*}
		\int_{\Omega} \frac{F\l(x,u_n^+\r)}{\l\|u_n^+\r\|^{q\vartheta}}\,\mathrm{d} x
		 & = \int_{\Omega_+} \frac{F\l(x,u_n^+\r)}{\l\|u_n^+\r\|^{q\vartheta}}\,\mathrm{d} x+\int_{\Omega\setminus \Omega_+} \frac{F\l(x,u_n^+\r)}{\l\|u_n^+\r\|^{q\vartheta}}\,\mathrm{d} x \\
		 & \geq \int_{\Omega_+} \frac{F\l(x,u_n^+\r)}{\l\|u_n^+\r\|^{q\vartheta}}\,\mathrm{d} x - \frac{M_4}{\l\|u_n^+\r\|^{q\vartheta}}|\Omega|_N.
	\end{align*}
	Combining this with \eqref{c7c} and \eqref{c10}, one has
	\begin{align}
		\label{c15}
		\int_{\Omega} \frac{F\l(x,u_n^+\r)}{\l\|u_n^+\r\|^{q\vartheta}}\,\mathrm{d} x\to +\infty.
	\end{align}
	On the other hand, without loss of generality, consider $\| u_n^- \| \leq 1$ and $\| u_n^+ \| \geq 1$ for all $n \in \N$, from \eqref{c1} and \eqref{c4} it follows that
	\begin{align*}
		& \int_{\Omega} \frac{F\l(x,u_n^+\r)}{\l\|u_n^+\r\|^{q\vartheta}}\,\mathrm{d} x  \\
		& \leq \frac{a_0}{p}\frac{1}{\l\|u_n^+\r\|^{{q\vartheta}-p}}\|\nabla y_n\|_{p}^p+\frac{a_0}{q}\frac{1}{\l\|u_n^+\r\|^{q(\vartheta-1)}}\|\nabla y_n\|_{q,\mu}^q \\
		& \quad +\frac{b_0}{\vartheta} \l(\frac{1}{p}\frac{1}{\l\|u_n^+\r\|^{q-p}}\|\nabla y_n\|_{p}^p+\frac{1}{q}\|\nabla y_n\|_{q,\mu}^q+1\r)^\vartheta+M_5 \\
		& \leq a_0\rho_{\mathcal{H}}(\nabla y_n) +b_0 \l(\rho_{\mathcal{H}}(\nabla y_n)+1\r)^\vartheta+M_5
	\end{align*}
	for all $n \in \N$ and for some $M_5>0$.  Then, due to \eqref{c7c}, $\|y_n\|=1$ for all $n\in\N$ and Proposition \ref{proposition_modular_properties}(i), we obtain
	\begin{align*}
		\int_{\Omega} \frac{F\l(x,u_n^+\r)}{\l\|u_n^+\r\|^{q\vartheta}}\,\mathrm{d} x\leq M_6 \quad\text{for all }n \in\N,
	\end{align*}
	for some $M_6>0$, which contradicts \eqref{c15}.

	{\bf Case 2:} $y \equiv 0$.

	Let $\kappa \geq 1$ and define
	\begin{align*}
		v_n =(q\kappa)^{\frac{1}{q}}y_n \quad\text{for all }n \in \N.
	\end{align*}
	From \eqref{c8} we have
	\begin{align*}
		v_n \rightharpoonup 0\quad \text{in } W^{1, \mathcal{H} }_0 ( \Omega )
		\quad\text{and}\quad
		v_n \to 0 \quad\text{in }L^{r}(\Omega),
	\end{align*}
	which implies that
	\begin{align}
		\label{c17}
		\int_\Omega F(x,v_n)\,\mathrm{d} x \to 0.
	\end{align}

	Now let $t_n \in [0,1]$ be such that
	\begin{align}
		\label{c18}
		\varphi_+\l(t_n u_n^+\r) =\max \left\{ \varphi_+\l(tu_n^+\r)\colon t\in [0,1] \right\}.
	\end{align}
	Since $\l\|u_n^+\r\|\to +\infty$ we can find $n_0 \in \N$ such that
	\begin{align}
		\label{c19}
		0 < \frac{(q\kappa)^{\frac{1}{q}}}{\l\|u_n^+\r\|} \leq 1 \quad\text{for all }n \geq n_0.
	\end{align}
	From \eqref{c18}, \eqref{c19}, Proposition \ref{proposition_modular_properties}(ii) and \eqref{c17} we get
	\begin{align*}
		\varphi\l(t_nu_n^+\r)
		& \geq  \varphi(v_n) \\
		& =\Psi[\Phi_{\mathcal{H}}(v_n)]-\int_\Omega F(x,v_n)\,\mathrm{d} x  \\
		& =a_0\l(\frac{1}{p}\|\nabla v_n\|_p^p+\frac{1}{q}\|\nabla v_n\|_{q,\mu}^q\r)+\frac{b_0}{\vartheta}\l(\frac{1}{p}\|\nabla v_n\|_p^p+\frac{1}{q}\|\nabla v_n\|_{q,\mu}^q\r)^\vartheta \\
		& \quad -\int_\Omega F(x,v_n)\,\mathrm{d} x   \\
		& \geq \frac{b_0}{\vartheta} \left( \frac{1}{p}q^{\frac{p}{q}}\kappa^{\frac{p}{q}} \|\nabla y_n\|_p^p + \kappa \|\nabla y_n\|_{q,\mu}^q \right)^\vartheta -\int_\Omega F(x,v_n)\,\mathrm{d} x  \\
		& \geq \frac{b_0}{\vartheta} \left( \min\left\{\frac{1}{p}q^{\frac{p}{q}},1\right\}\kappa^{\frac{p}{q}} \rho_\mathcal{H}(\nabla y_n) \right) ^\vartheta -\int_\Omega F(x,v_n)\,\mathrm{d} x \\
		& \geq \frac{b_0}{\vartheta} \left( \min\left\{\frac{1}{p}q^{\frac{p}{q}},1\right\}\kappa^{\frac{p}{q}} \right) ^\vartheta -M_7 \quad \text{for all }n \geq n_1,
	\end{align*}
	for some $n_1 \geq n_0$ and $M_7>0$. Because $\kappa \geq 1$ is arbitrary chosen, we see that
	\begin{align}
		\label{c20}
		\varphi_+\l(t_nu_n^+\r) \to +\infty \quad\text{as }n \to\infty.
	\end{align}
	Taking \eqref{c1} into account gives
	\begin{align}
		\label{c22}
		\varphi_+(0)=0 \quad\text{and}\quad \varphi_+(u_n^+) \leq M_8\quad \text{for all }n \in \N
	\end{align}
	and for some $M_8>0$. From \eqref{c20} and \eqref{c22} there exists $n_2\in\N$ such that
	\begin{align}
		\label{c23}
		t_n \in (0,1) \quad\text{for all }n \geq n_2.
	\end{align}
	Therefore, by the chain rule, using \eqref{c23} and \eqref{c18}, one has
	\begin{align*}
		0=\frac{d}{dt}\varphi_+\l(tu_n^+\r)\Big|_{t=t_n} = \l\langle \varphi_+' \l(t_n u_n^+\r),u_n^+\r\rangle \quad\text{for all }n  \geq n_2,
	\end{align*}
	which can be written as
	\begin{equation}
		\label{c24}
		\begin{aligned}
			&\l(a_0+b_0\l(\frac{1}{p}\l\|\nabla \l(t_nu_n^+\r)\r\|_p^p   +\frac{1}{q}\l\|\nabla \l(t_nu_n^+\r)\r\|_{q,\mu}^q\r)^{\vartheta-1}\r)\\
			&\times \l(\l\|\nabla \l(t_nu_n^+\r)\r\|_p^p+\l\|\nabla \l(t_nu_n^+\r)\r\|_{q,\mu}^q\r)\\
			&=\int_\Omega  f\left(x,t_nu_n^+\right)t_nu_n^+\,\mathrm{d} x
		\end{aligned}
	\end{equation}
	for all $n \geq n_2$. Then, from \eqref{c24}, $t_n\in (0,1)$, hypothesis \eqref{H3}\eqref{H3iv} and \eqref{c7-} we conclude
	\begin{align*}
		&q\vartheta \varphi_+(t_nu_n^+)\\
		&=q\vartheta a_0\l(\frac{1}{p}\l\|\nabla \l(t_nu_n^+\r)\r\|_p^p+\frac{1}{q}\l\|\nabla \l(t_nu_n^+\r)\r\|_{q,\mu}^q\r)\\
		&\quad +q b_0 \l(\frac{1}{p}\l\|\nabla \l(t_nu_n^+\r)\r\|_p^p+\frac{1}{q}\l\|\nabla \l(t_nu_n^+\r)\r\|_{q,\mu}^q\r)^\vartheta\\
		&\quad -\int_\Omega q\vartheta F\l(x,t_nu_n^+\r)\,\mathrm{d} x\\
		&=a_0\left(\frac{q\vartheta}{p}-1\right)\l\|\nabla \l(t_nu_n^+\r)\r\|_p^p +a_0\left(\vartheta-1\right)\l\| \nabla \l(t_nu_n^+\r)\r\|_{q,\mu}^q\\
		&\quad+b_0\l(\frac{1}{p}\l\|\nabla \l(t_nu_n^+\r)\r\|_p^p+\frac{1}{q}\l\|\nabla \l(t_nu_n^+\r)\r\|_{q,\mu}^q\r)^{\vartheta-1} \l[\l(\frac{q}{p}-1\r) \l\|\nabla \l(t_nu_n^+\r)\r\|_p^p\r]\\
		&\quad+\int_\Omega \left( f\l(x,t_nu_n^+\r)t_nu_n^+-q\vartheta F\l(x,t_nu_n^+\r) \right)\,\mathrm{d} x \\
		&\leq a_0\left(\frac{q\vartheta}{p}-1\right)\l\|\nabla u_n\r\|_p^p +a_0\left(\vartheta-1\right)\l\| \nabla u_n\r\|_{q,\mu}^q\\
		&\quad+b_0\l(\frac{1}{p}\l\|\nabla u_n\r\|_p^p+\frac{1}{q}\l\|\nabla u_n\r\|_{q,\mu}^q\r)^{\vartheta-1} \l[\l(\frac{q}{p}-1\r) \l\|\nabla u_n\r\|_p^p\r]\\
		&\quad+\int_\Omega \left( f\l(x,u_n^+\r)u_n^+-q\vartheta F\l(x,u_n^+\r) \right)\,\mathrm{d} x \\
		&\leq M_3
	\end{align*}
	for all $ n \geq n_2$, which is a contradiction to \eqref{c20}. This proves the Claim.

	From the Claim and \eqref{c4} we know that the sequence $\{u_n\}_{n\geq 1}\subseteq  W^{1, \mathcal{H} }_0 ( \Omega ) $ is bounded. So we may suppose that
	\begin{align}
		\label{c27}
		u_n \rightharpoonup u \quad\text{in } W^{1, \mathcal{H} }_0 ( \Omega ) \quad\text{and}\quad
		u_n \to u \quad\text{in }L^{r}(\Omega).
	\end{align}
	Testing \eqref{c3} with $v=u_n-u\in  W^{1, \mathcal{H} }_0 ( \Omega )$ we get
	\begin{equation}
		\label{c3888}
		\begin{aligned}
			& \l(a_0+b_0\l(\frac{1}{p}\|\nabla u_n\|_p^p+\frac{1}{q}\|\nabla u_n\|_{q,\mu}^q\r)^{\vartheta-1}\r)\\
			&\times \int_\Omega \left(|\nabla u_n|^{p-2}\nabla u_n + \mu(x) |\nabla u_n|^{q-2}\nabla u_n \right)\cdot \nabla (u_n-u)\,\mathrm{d} x\\
			&-\int_\Omega  f\l(x,u_n^+\r)(u_n-u)\,\mathrm{d} x \leq \varepsilon_n\|u_n-u\|.
		\end{aligned}
	\end{equation}
	By passing to the $\limsup$ as $n\to\infty$ and using \eqref{c27} and \eqref{H3}\eqref{H3i} for the $f$ term, we get from \eqref{c3888}
	\begin{align*}
		\limsup_{n\to\infty}\, \frac{b_0}{q^{\theta - 1}} \rho_{\mathcal{H}}(\nabla u_n) \langle A(u_n),u_n-u\rangle \leq 0.
	\end{align*}
	Passing to further subsequence, it must hold either $\rho_{\mathcal{H}}(\nabla u_n) \to 0$, so $u=0$, or
	\begin{align*}
		\limsup_{n\to\infty}\, \langle A(u_n),u_n-u\rangle \leq 0.
	\end{align*}
	From the \textnormal{(S$_+$)}-property of $A$, see Proposition \ref{properties_operator_double_phase}, we conclude that $u_n\to u$ in $ W^{1, \mathcal{H} }_0 ( \Omega )$. Hence, $\varphi_+$ fulfills the Cerami condition.
\end{proof}

The following proposition will be useful for later considerations.

\begin{proposition}
	\label{proposition_auxiliary_result}
	Let hypotheses \eqref{H1}--\eqref{H3} be satisfied. If $a_0=0$, then there exist constants $\hat{C}, \tilde{C}>0$ such that
	\begin{align*}
		\varphi(u),\, \varphi_{\pm}(u) \geq
		\begin{cases}
			\hat{C} \|u\|^{q\vartheta} - \tilde{C} \|u\|^{r}  & \text{if } \|u\| \leq 1, \\
			\hat{C} \|u\|^{p\vartheta} -  \tilde{C} \|u\|^{r} & \text{if } \|u\|> 1.
		\end{cases}
	\end{align*}
	If $a_0>0$, then $\hat{C}, \tilde{C}>0$ satisfy
	\begin{align*}
		\varphi(u),\, \varphi_{\pm}(u) \geq
		\begin{cases}
			\hat{C} \|u\|^q - \tilde{C} \|u\|^{r}  & \text{if } \|u\| \leq 1, \\
			\hat{C} \|u\|^p -  \tilde{C} \|u\|^{r} & \text{if } \|u\|> 1.
		\end{cases}
	\end{align*}
\end{proposition}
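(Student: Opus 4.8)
The plan is to prove the estimate for $\varphi$ and then observe that it transfers verbatim to $\varphi_{\pm}$. Indeed, the Kirchhoff term $\Psi[\Phi_{\mathcal{H}}(\nabla u)]$ is literally the same in $\varphi$ and in $\varphi_{\pm}$, while the differing integral terms satisfy, pointwise, $|u^{\pm}|\le|u|$, so any upper bound we establish for $\int_\Omega|F(x,u)|\,\mathrm{d} x$ of the form $\varepsilon\int_\Omega|u|^{\beta}\,\mathrm{d} x+C_\varepsilon\int_\Omega|u|^{r}\,\mathrm{d} x$ simultaneously bounds $\int_\Omega F(x,\pm u^{\pm})\,\mathrm{d} x$ by the same expression. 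Hence it is enough to treat $\varphi$, and the same constants $\hat C,\tilde C$ will work for $\varphi_{\pm}$.

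First I would record a pointwise estimate for $F$. Writing $\beta=p$ when $a_0>0$ and $\beta=p\vartheta$ when $a_0=0$, the behaviour near the origin in \eqref{H3}\eqref{H3iii} gives $|f(x,s)|\le\varepsilon|s|^{\beta-1}$ for $|s|$ small, which upon integration yields $|F(x,s)|\le\tfrac{\varepsilon}{\beta}|s|^{\beta}$ there; for $|s|$ bounded away from $0$ the growth bound \eqref{H3}\eqref{H3i} lets us absorb the constant into $|s|^r$. Thus for every $\varepsilon>0$ there is $C_\varepsilon>0$ with
\begin{align*}
	|F(x,s)| \le \varepsilon|s|^{\beta}+C_\varepsilon|s|^{r}\quad\text{for a.a. }x\in\Omega\text{ and all }s\in\R.
\end{align*}

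The crucial point — and the main obstacle — is that one cannot simply play the crude bound $\Psi[\Phi_{\mathcal{H}}(\nabla u)]\ge\hat C\|u\|^{q\vartheta}$ off against the $F$-integral, because after embedding the term $\varepsilon\int_\Omega|u|^{\beta}\,\mathrm{d} x$ carries the exponent $\beta<q\vartheta$ and dominates near $u=0$. The remedy is to absorb the $\varepsilon$-contribution into the \emph{matching} lowest-order piece $\tfrac1p\|\nabla u\|_p^p$ of $\Phi_{\mathcal{H}}$ before passing to the modular. When $a_0>0$ I would use that $u\in W^{1,p}_0(\Omega)$ by Proposition \ref{proposition_embeddings}(i) together with Poincar\'e's inequality $\|u\|_p^p\le C\|\nabla u\|_p^p$, so that for $\varepsilon$ small $a_0\tfrac1p\|\nabla u\|_p^p-\varepsilon\|u\|_p^p$ stays a positive multiple of $\|\nabla u\|_p^p$; together with $a_0\tfrac1q\|\nabla u\|_{q,\mu}^q$ and $\Psi\ge a_0\Phi_{\mathcal H}$ this leaves a positive multiple of $\rho_{\mathcal{H}}(\nabla u)$. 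When $a_0=0$ I would instead use the embedding $W^{1,p}_0(\Omega)\hookrightarrow L^{p\vartheta}(\Omega)$, available since $p\vartheta<q\vartheta<p^*$ by \eqref{H2}, to get $\int_\Omega|u|^{p\vartheta}\,\mathrm{d} x\le C\big(\|\nabla u\|_p^p\big)^{\vartheta}\le Cp^{\vartheta}\big[\Phi_{\mathcal{H}}(\nabla u)\big]^{\vartheta}$; subtracting this from $\Psi=\tfrac{b_0}{\vartheta}[\Phi_{\mathcal{H}}(\nabla u)]^{\vartheta}$ and choosing $\varepsilon$ small leaves a positive multiple of $[\Phi_{\mathcal{H}}(\nabla u)]^{\vartheta}$, hence of $[\rho_{\mathcal{H}}(\nabla u)]^{\vartheta}$ via $\Phi_{\mathcal{H}}(\nabla u)\ge\tfrac1q\rho_{\mathcal{H}}(\nabla u)$.

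Finally I would convert the modular back to the norm with Proposition \ref{proposition_modular_properties}(iii)--(iv): for $\|u\|\le1$ one has $\rho_{\mathcal{H}}(\nabla u)\ge\|u\|^{q}$ and for $\|u\|>1$ one has $\rho_{\mathcal{H}}(\nabla u)\ge\|u\|^{p}$. In the case $a_0=0$ this is applied to $[\rho_{\mathcal{H}}(\nabla u)]^{\vartheta}$, producing the exponents $q\vartheta$ and $p\vartheta$, while in the case $a_0>0$ it is applied directly, producing $q$ and $p$. Combining with $\int_\Omega|u|^{r}\,\mathrm{d} x\le C_r\|u\|^{r}$ (from Proposition \ref{proposition_embeddings}(ii), since $r<p^*$) gives the four asserted inequalities, with $\hat C$ the resulting positive constant and $\tilde C=C_\varepsilon C_r$. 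Since every estimate on the $F$-integral used only $|u^{\pm}|\le|u|$, the identical bounds hold for $\varphi_{\pm}$.
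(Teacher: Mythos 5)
Your proposal is correct and follows essentially the same route as the paper: the same $\varepsilon$-splitting of $F$ into $\varepsilon|s|^{\beta}+C_\varepsilon|s|^{r}$ with $\beta=p$ or $p\vartheta$ according to the sign of $a_0$, absorption of the $\varepsilon$-term into the $\|\nabla u\|_p^p$-part of the Kirchhoff energy via the embeddings $W^{1,p}_0(\Omega)\hookrightarrow L^{p}(\Omega)$ resp. $L^{p\vartheta}(\Omega)$, and conversion of the modular to the norm via Proposition \ref{proposition_modular_properties}(iii)--(iv). The only (harmless) cosmetic differences are that you absorb into $[\Phi_{\mathcal H}(\nabla u)]^{\vartheta}$ directly rather than first splitting it into $\|\nabla u\|_p^{p\vartheta}$ and $\|\nabla u\|_{q,\mu}^{q\vartheta}$ pieces, and that you make explicit, via $|u^{\pm}|\le|u|$, the transfer to $\varphi_{\pm}$ that the paper dismisses with ``similarly.''
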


\begin{proof}
	We are only going to prove the assertion for the functional $\varphi$, the proofs for $\varphi_{\pm}$ can be done similarly.

	Assume that $a_0=0$. From \eqref{H3}\eqref{H3i} and \eqref{H3}\eqref{H3iii}, for a given $\varepsilon>0$, we can find $\hat{c}=\hat{c}(\varepsilon)>0$ such that
	\begin{align}
		\label{l1}
		F(x,s) \leq \frac{\varepsilon}{p\vartheta}|s|^{p\vartheta} +\hat{c}|s|^{r} \quad \text{for a.a.\,}x\in \Omega.
	\end{align}
	Let $u \in  W^{1, \mathcal{H} }_0 ( \Omega )$. Applying \eqref{l1} and Proposition \ref{proposition_embeddings}(ii) leads to
	\begin{align*}
		\varphi(u)
		&=\Psi\l[\Phi_{\mathcal{H}}(u)\r]-\int_\Omega  F(x,u)\,\mathrm{d} x\\
		&\geq \frac{b_0}{\vartheta}\l(\frac{1}{p^\vartheta}\|\nabla u\|_p^{p\vartheta}+\frac{1}{q^\vartheta}\|\nabla u\|_{q,\mu}^{q\vartheta}\r)
		-\frac{\varepsilon}{p\vartheta} \|u\|_{p\vartheta}^{p\vartheta} - \hat{c} \|u\|_{r}^{r} \\
		&\geq \l(\frac{b_0}{p^{\vartheta}\vartheta}-\frac{C^{p\vartheta}_\Omega\varepsilon}{p\vartheta}\r)\|\nabla u\|_p^{p\vartheta}+\frac{b_0}{q^{\vartheta}\vartheta}\|\nabla u\|_{q,\mu}^{q\vartheta} -\hat{c} \l(C_\Omega^\mathcal{H}\r)^r\|u\|^r\\
		& \geq \min \l\{\l(\frac{b_0}{p^{\vartheta}\vartheta}-\frac{C^{p\vartheta}_\Omega\varepsilon}{p\vartheta}\r),\frac{b_0}{q^{\vartheta}\vartheta}\r\} \frac{1}{2^{\vartheta-1}} \left[ \rho_{\mathcal{H}}(\nabla u) \right]^\vartheta - \hat{c} \l(C_\Omega^{\mathcal{H}}\r)^{r} \|u\|^{r},
	\end{align*}
	with $C_\Omega$, $C_\Omega^{\mathcal{H}}$ being the embedding constants of the embeddings $W^{1,p}_0(\Omega)\to L^{p\vartheta}(\Omega)$ and $ W^{1, \mathcal{H} }_0 ( \Omega ) \to L^{r}(\Omega)$, respectively. Taking $\varepsilon \in (0, \frac{b_0}{p^{\vartheta-1}C_\Omega^{p\vartheta}})$ and using Proposition \ref{proposition_modular_properties}(iii), (iv) the assertion of the proposition follows.

	Assume now that $a_0>0$. As above, for each $\varepsilon>0$ there exists $\hat{c}=\hat{c}(\varepsilon)>0$ such that
	\begin{align*}
		F(x,s) \leq \frac{\varepsilon}{p}|s|^p +\hat{c}|s|^{r} \quad \text{for a.a.\,}x\in \Omega.
	\end{align*}
	and with very similar arguments we deduce
	\begin{align*}
		\varphi(u)
		&=\Psi\l[\Phi_{\mathcal{H}}(u)\r]-\int_\Omega  F(x,u)\,\mathrm{d} x\\
		&\geq a_0\l(\frac{1}{p}\|\nabla u\|_p^p+\frac{1}{q}\|\nabla u\|_{q,\mu}^q\r) -\frac{\varepsilon}{p} \|u\|_p^p - \hat{c} \|u\|_{r}^{r} \\
		&\geq \frac{1}{p}\l(a_0-C^p_\Omega\varepsilon\r)\|\nabla u\|_p^p+\frac{a_0}{q}\|\nabla u\|_{q,\mu}^q -\hat{c} \l(C_\Omega^\mathcal{H}\r)^r\|u\|^r\\
		& \geq \min \l\{\frac{1}{p}\l(a_0-C^p_\Omega\varepsilon\r),\frac{a_0}{q}\r\} \rho_{\mathcal{H}}(\nabla u)- \hat{c} \l(C_\Omega^{\mathcal{H}}\r)^{r} \|u\|^{r},
	\end{align*}
	from where the proposition follows with the same arguments.
\end{proof}

The next result shows that $u=0$ is a local minimizer of the functionals $\varphi_{\pm}$.

\begin{proposition}
	\label{proposition_local_minimizers}
	Let hypotheses \eqref{H1}--\eqref{H3} be satisfied. Then $u=0$ is a strict local minimizer for the functionals $\varphi_{\pm}$.
\end{proposition}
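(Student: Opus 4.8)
The plan is to exploit Proposition \ref{proposition_auxiliary_result}, which already provides the lower bounds I need, and to verify that these bounds force a strict minimum at the origin. The statement to prove is that $u=0$ is a strict local minimizer for $\varphi_\pm$, meaning there exists $\delta>0$ such that $\varphi_\pm(u)>\varphi_\pm(0)=0$ for all $u$ with $0<\|u\|\le\delta$.

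First I would note that by Proposition \ref{proposition_auxiliary_result}, for $\|u\|\le 1$ we have $\varphi_\pm(u)\ge \hat C\|u\|^{\alpha}-\tilde C\|u\|^r$, where $\alpha=q\vartheta$ in the degenerate case $a_0=0$ and $\alpha=q$ in the nondegenerate case $a_0>0$. The key point is the ordering of exponents: in both cases one has $\alpha<r$. Indeed, in the nondegenerate case $\alpha=q<p^*$ and $r$ can be taken strictly larger (via Remark \ref{remark-H2}, $q\vartheta<r$, and $q\le q\vartheta$); in the degenerate case $\alpha=q\vartheta<r$ is exactly the content of Remark \ref{remark-H2} derived from \eqref{H3}\eqref{H3i}–\eqref{H3ii}. (If $r\le\alpha$ happened to occur for specific data, one may simply replace $r$ by a larger admissible exponent below $p^*$ in \eqref{l1}, since the estimate in Proposition \ref{proposition_auxiliary_result} holds for any such exponent; so without loss of generality $\alpha<r<p^*$.)

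\medskip

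Once $\alpha<r$ is secured, the proof is immediate. For $0<\|u\|\le 1$ I would factor the lower bound as
\begin{align*}
	\varphi_\pm(u)\ge \hat C\|u\|^{\alpha}-\tilde C\|u\|^{r}
	=\|u\|^{\alpha}\bigl(\hat C-\tilde C\|u\|^{r-\alpha}\bigr).
\end{align*}
Since $r-\alpha>0$, the term $\tilde C\|u\|^{r-\alpha}\to 0$ as $\|u\|\to 0^+$, so I can choose
\begin{align*}
	\delta:=\min\left\{1,\ \left(\frac{\hat C}{2\tilde C}\right)^{\frac{1}{r-\alpha}}\right\}>0,
\end{align*}
which guarantees $\hat C-\tilde C\|u\|^{r-\alpha}\ge \hat C/2>0$ whenever $\|u\|\le\delta$. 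Consequently $\varphi_\pm(u)\ge \tfrac{\hat C}{2}\|u\|^{\alpha}>0$ for all $u$ with $0<\|u\|\le\delta$, while $\varphi_\pm(0)=0$ since $\Psi(0)=0$ and $F(x,0)=0$. This establishes that $0$ is a strict local minimizer, and the argument for $\varphi$ itself is verbatim the same using the $\varphi$-bound in Proposition \ref{proposition_auxiliary_result}.

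\medskip

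There is essentially no hard obstacle here; the content has been front-loaded into Proposition \ref{proposition_auxiliary_result}. The only point requiring care is confirming the strict exponent inequality $\alpha<r$ in \emph{both} the degenerate and nondegenerate regimes, so that the negative term is genuinely higher order near the origin. This is exactly why the hypotheses \eqref{H3}\eqref{H3iii} were split into the two cases $a_0=0$ and $a_0>0$: each choice of normalizing exponent ($p\vartheta$ versus $p$) feeds into Proposition \ref{proposition_auxiliary_result} to produce the correct leading power, and in each case Remark \ref{remark-H2} together with $\vartheta\ge 1$ and $p<q$ supplies the gap $\alpha<r$ that makes the factored bound strictly positive on a punctured ball.
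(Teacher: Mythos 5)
Your proposal is correct and follows essentially the same route as the paper: both invoke Proposition \ref{proposition_auxiliary_result} and then use the exponent gap from Remark \ref{remark-H2} (the paper simply notes $q\vartheta<r$ and, since $\|u\|\le 1$ and $\vartheta\ge 1$ give $\|u\|^{q}\ge\|u\|^{q\vartheta}$, works with the single bound $\hat C\|u\|^{q\vartheta}-\tilde C\|u\|^{r}$ in both regimes). Your explicit choice of $\delta$ and the case distinction $\alpha\in\{q,q\vartheta\}$ are just a slightly more detailed writing of the same argument; the hedge about replacing $r$ is unnecessary since $q\le q\vartheta<r$ always holds.
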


\begin{proof}
	Let $u \in  W^{1, \mathcal{H} }_0 ( \Omega )$ be such that $\|u\|<1$. From Proposition \ref{proposition_auxiliary_result} we get
	\begin{align*}
		\varphi_+(u) \geq
			\hat{C} \|u\|^{q\vartheta} - \tilde{C} \|u\|^{r}
	\end{align*}
	Because of $q\vartheta<r$, see Remark \ref{remark-H2}, we can find $\eta \in (0,1)$ small enough such that
	\begin{align*}
		\varphi_+(u) >0=\varphi_+(0) \quad\text{for all } u \in  W^{1, \mathcal{H} }_0 ( \Omega ) \text{ with }0<\|u\|<\eta.
	\end{align*}
	Thus,  $u=0$ is a strict local minimizer of $\varphi_+$. Similarly, we show that $u=0$ is a strict local minimizer for the functional $\varphi_-$.
\end{proof}

Before we prove the existence of constant sign solutions to problem \eqref{problem}, we need the following proposition.

\begin{proposition}
	\label{proposition_unbounded_below}
	Let hypotheses \eqref{H1}--\eqref{H3} be satisfied. Then, for $u \in  W^{1, \mathcal{H} }_0 ( \Omega )$ with $u(x)\geq 0$ for a.a.\,$x\in\Omega$, it holds $\varphi_{\pm}(tu)\to -\infty$ as $t \to \pm\infty$.
\end{proposition}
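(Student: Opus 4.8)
The plan is to prove the statement for $\varphi_+$ as $t\to+\infty$; the case of $\varphi_-$ as $t\to-\infty$ will follow by the symmetric argument. Throughout I take $u\neq 0$ (for $u\equiv 0$ the assertion is vacuous, since $\varphi_\pm(0)=0$). Fix $u\in W^{1,\mathcal{H}}_0(\Omega)$ with $u\geq 0$ a.e.\ and $u\neq 0$. Since $u\geq 0$, for every $t\geq 0$ one has $(tu)^+=tu$, hence
\begin{align*}
	\varphi_+(tu)=\Psi[\Phi_{\mathcal{H}}(\nabla(tu))]-\int_\Omega F(x,tu)\,\mathrm{d} x,
\end{align*}
and the whole argument reduces to comparing the growth of the two terms on the right as $t\to+\infty$.

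First I would bound the Kirchhoff term from above. Using the $p$- and $q$-homogeneity of the two pieces of $\Phi_{\mathcal{H}}$ together with $p<q$, for $t\geq 1$ one has $\Phi_{\mathcal{H}}(\nabla(tu))\leq t^q\Phi_{\mathcal{H}}(\nabla u)$, so that from the explicit form of $\Psi$ and its monotonicity there is a constant $C_1=C_1(u)>0$ with
\begin{align*}
	\Psi[\Phi_{\mathcal{H}}(\nabla(tu))]\leq a_0\,t^q\Phi_{\mathcal{H}}(\nabla u)+\frac{b_0}{\vartheta}\,t^{q\vartheta}[\Phi_{\mathcal{H}}(\nabla u)]^\vartheta\leq C_1\,t^{q\vartheta}
\end{align*}
for all large $t$. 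Thus the nonlocal term grows \emph{at most} like $t^{q\vartheta}$. This is exactly the rate against which the superlinearity hypothesis is calibrated: by \eqref{superlinear-F}, for any prescribed $M>0$ there is $s_M>0$ with $F(x,s)\geq M|s|^{q\vartheta}$ for a.a.\ $x$ and all $|s|\geq s_M$, while the global lower bound $F(x,s)\geq -M_4$ (which follows from \eqref{H3}\eqref{H3i} and \eqref{H3}\eqref{H3ii}, as in \eqref{c11}) handles the remaining range $|s|<s_M$. Combining the two ranges produces a constant $C_M>0$ such that $F(x,s)\geq M|s|^{q\vartheta}-C_M$ for a.a.\ $x\in\Omega$ and all $s\in\R$.

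Next I would integrate this pointwise lower bound. Since $q\vartheta<p^*$ by \eqref{H2}, Proposition \ref{proposition_embeddings}(ii) gives $u\in L^{q\vartheta}(\Omega)$, and $u\geq 0$, $u\neq 0$ force $\|u\|_{q\vartheta}>0$. Hence
\begin{align*}
	\int_\Omega F(x,tu)\,\mathrm{d} x\geq M\,t^{q\vartheta}\|u\|_{q\vartheta}^{q\vartheta}-C_M|\Omega|_N,
\end{align*}
and therefore, for all large $t$,
\begin{align*}
	\varphi_+(tu)\leq\big(C_1-M\|u\|_{q\vartheta}^{q\vartheta}\big)\,t^{q\vartheta}+C_M|\Omega|_N.
\end{align*}
Choosing $M>C_1/\|u\|_{q\vartheta}^{q\vartheta}$ renders the coefficient of $t^{q\vartheta}$ strictly negative, whence $\varphi_+(tu)\to-\infty$ as $t\to+\infty$.

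For $\varphi_-$ with $t\to-\infty$ the identity $-(tu)^-=tu$ (valid for $t\leq 0$, $u\geq 0$) yields the same expression for $\varphi_-(tu)$, now with $t^q$ and $t^{q\vartheta}$ replaced by $|t|^q$ and $|t|^{q\vartheta}$; the two-sided form of \eqref{superlinear-F} supplies the identical lower bound for $F$, so the computation is verbatim. The one point requiring care is the exponent bookkeeping: the worst-case growth of the Kirchhoff term is precisely $t^{q\vartheta}$ (it would be only $t^{p\vartheta}$ if $\|\nabla u\|_{q,\mu}=0$), and it is the matching of this rate with the exponent $q\vartheta$ appearing in \eqref{H3}\eqref{H3ii}, admissible thanks to $q\vartheta<p^*$, that guarantees the superlinear term dominates. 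I do not expect any genuine difficulty beyond this alignment of exponents.
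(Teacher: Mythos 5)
Your proposal is correct and follows essentially the same route as the paper: both derive the lower bound $F(x,s)\geq \frac{K}{q\vartheta}|s|^{q\vartheta}-C$ from \eqref{H3}\eqref{H3i}--\eqref{H3}\eqref{H3ii}, bound the Kirchhoff term above by $C_1|t|^{q\vartheta}$ using $p<q$ and $\vartheta\geq 1$, and then choose the constant in the superlinearity bound large enough to make the coefficient of $|t|^{q\vartheta}$ negative. Your explicit remark that $u\neq 0$ is needed (so that $\|u\|_{q\vartheta}>0$) is a reasonable reading of the statement, which the paper leaves implicit.
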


\begin{proof}
	From hypotheses \eqref{H3}\eqref{H3i} and \eqref{H3}\eqref{H3ii}, for each $K>0$ there exists $C=C(K)>0$ such that
	\begin{align}
		\label{c4354}
		F(x,s) \geq \frac{K}{q\vartheta} |s|^{q\vartheta}-C
	\end{align}
	for a.a.\,$x\in\Omega$ and for all $s\in\R$. Let $u \in  W^{1, \mathcal{H} }_0 ( \Omega )$ with $u(x)\geq 0$ for a.a.\,$x\in\Omega$ and $t\in\R$ with $|t|\geq 1$. Applying \eqref{c4354} we obtain
	\begin{align*}
		\varphi_{\pm}(tu)
		 & \leq |t|^qa_0\l(\frac{1}{p}\|\nabla u\|_p^p+\frac{1}{q}\|\nabla u\|_{q,\mu}^q\r) \\
		 &\quad+|t|^{q\vartheta} \left[ \frac{b_0}{\vartheta}\l(\frac{1}{p}\|\nabla u\|_p^p+\frac{1}{q}\|\nabla u\|_{q,\mu}^q\r)^\vartheta -\frac{K}{q\vartheta}\|u\|_{q\vartheta}^{q\vartheta}\right]
		 +C|\Omega|_N.
	\end{align*}
	Since $\vartheta q\geq q > p$ by \eqref{H1} and \eqref{H2}, if we choose $K>0$ big enough, we observe that $\varphi_{\pm}(tu)\to -\infty$ as $t \to \pm\infty$.
\end{proof}

Now we are in the position to state and prove our main result about constant sign solutions for problem \eqref{problem}.

\begin{proposition}
	\label{proposition_constant_sign_solutions}
	Let hypotheses \eqref{H1}--\eqref{H3} be satisfied. Then problem \eqref{problem} has at least two nontrivial constant sign solutions $u_0, v_0\in  W^{1, \mathcal{H} }_0 ( \Omega )$ such that
	\begin{align*}
		u_0(x) \geq 0 \quad\text{and}\quad v_0(x) \leq 0 \quad \text{for a.a.\,} x\in \Omega.
	\end{align*}
\end{proposition}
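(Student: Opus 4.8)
The plan is to obtain the positive solution $u_0$ as a mountain pass critical point of the truncated functional $\varphi_+$ and then to upgrade it to a nonnegative weak solution of the full problem \eqref{problem}; the negative solution $v_0$ is produced in exactly the same way from $\varphi_-$. Since $\varphi_+ \in C^1(W^{1,\mathcal{H}}_0(\Omega))$ and, by Proposition \ref{proposition_cerami}, satisfies the Cerami condition (hence the $\mathrm{C}_c$-condition at every level), all hypotheses of Theorem \ref{Th:MPT} except the geometry are already in place.

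First I would verify the mountain pass geometry around the origin. Proposition \ref{proposition_local_minimizers} shows that $0$ is a strict local minimizer of $\varphi_+$ with $\varphi_+(0)=0$, so there is $\delta\in(0,1)$ with
\[
\inf\{\varphi_+(u)\colon \|u\|=\delta\}=m_\delta>0=\varphi_+(0).
\]
For the second anchor point I would fix any $w\in W^{1,\mathcal{H}}_0(\Omega)$ with $w\geq0$ and $w\neq0$; Proposition \ref{proposition_unbounded_below} gives $\varphi_+(tw)\to-\infty$ as $t\to+\infty$, so for $t$ large enough $u_1=tw$ satisfies $\|u_1\|>\delta$ and $\varphi_+(u_1)<0$. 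Thus $\max\{\varphi_+(0),\varphi_+(u_1)\}\leq m_\delta$, and Theorem \ref{Th:MPT} applied with base point $0$ yields a critical point $u_0$ of $\varphi_+$ at the level $c_+=\inf_\gamma\max_t\varphi_+(\gamma(t))\geq m_\delta>0$. Since $\varphi_+(0)=0<c_+=\varphi_+(u_0)$, the point $u_0$ is automatically nontrivial.

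It remains to show that $u_0$ has constant sign and solves the untruncated problem. Testing $\varphi_+'(u_0)=0$ with $-u_0^-\in W^{1,\mathcal{H}}_0(\Omega)$, and using that $f(x,u_0^+)u_0^-=0$ a.e.\ (the positive and negative parts have disjoint supports and $f(\cdot,0)=0$ by \eqref{H3}\eqref{H3iii}), I would obtain
\[
\psi(\Phi_{\mathcal{H}}(\nabla u_0))\left(\|\nabla u_0^-\|_p^p+\|\nabla u_0^-\|_{q,\mu}^q\right)=0.
\]
Because $u_0\neq0$ we have $\Phi_{\mathcal{H}}(\nabla u_0)>0$, whence $\psi(\Phi_{\mathcal{H}}(\nabla u_0))=a_0+b_0\Phi_{\mathcal{H}}(\nabla u_0)^{\vartheta-1}>0$ even in the degenerate case $a_0=0$, since $b_0>0$. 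Consequently $\|\nabla u_0^-\|_p=0$, which forces $u_0^-=0$, i.e.\ $u_0\geq0$. But then $u_0^+=u_0$, so $\varphi_+'(u_0)=\varphi'(u_0)=0$, and $u_0$ is a nontrivial nonnegative weak solution of \eqref{problem}. Running the identical argument with $\varphi_-$, testing instead in a nonpositive direction $u_0^+$, produces $v_0\leq0$ and completes the proof.

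The step I expect to require the most care is the sign argument in the degenerate regime: one must ensure that the Kirchhoff factor $\psi(\Phi_{\mathcal{H}}(\nabla u_0))$ does not vanish, which is precisely why the nontriviality $u_0\neq0$ — giving $\Phi_{\mathcal{H}}(\nabla u_0)>0$ — together with $b_0>0$ has to be invoked before one may conclude $u_0^-=0$.
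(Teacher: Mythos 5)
Your proposal follows essentially the same route as the paper: mountain pass for the truncated functionals $\varphi_\pm$ (Cerami condition from Proposition \ref{proposition_cerami}, geometry from Propositions \ref{proposition_local_minimizers} and \ref{proposition_unbounded_below}), nontriviality from the positive critical level, and the sign conclusion by testing with $-u_0^-$ and using that the Kirchhoff factor is strictly positive. The one step to tighten is the inference from ``$0$ is a strict local minimizer'' to $\inf\{\varphi_+(u)\colon\|u\|=\delta\}>0$, which is not automatic in infinite dimensions; it follows either from the quantitative lower bound of Proposition \ref{proposition_auxiliary_result} together with $q\vartheta<r$, or, as the paper argues, from Theorem 5.7.6 of Papageorgiou--R\u{a}dulescu--Repov\v{s} via the Cerami condition.
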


\begin{proof}
	From Proposition \ref{proposition_cerami} we know that $\varphi_{\pm}$ satisfy the Cerami condition. This fact and Proposition \ref{proposition_local_minimizers} along with Theorem 5.7.6 in Papageorgiou-R\u{a}dulescu-Repov\v{s} \cite{Papageorgiou-Radulescu-Repovs-2019} allows us to find $\eta_{\pm} \in (0,1)$ small enough such that
	\begin{align}
		\label{const1}
		\varphi_{\pm}(0)=0<\inf\left\{\varphi_{\pm}(0)\colon \|u\|=\eta_{\pm}\right\}=m_{\pm}.
	\end{align}
	From \eqref{const1} and the Propositions \ref{proposition_cerami} as well as \ref{proposition_unbounded_below} we see that we can apply the mountain pass theorem (see Theorem \ref{Th:MPT}). This gives the existence of $u_0, v_0 \in  W^{1, \mathcal{H} }_0 ( \Omega )$ such that $u_0 \in K_{\varphi_+}, \ v_0 \in K_{\varphi_-}$ and
	\begin{align*}
		\varphi_+(0)=0<m_+ \leq \varphi_+(u_0)
		\quad \text{as well as}\quad
		\varphi_-(0)=0<m_-\leq \varphi_-(v_0).
	\end{align*}
	From the properties above it is easy to see that $u_0 \neq 0$ and $v_0\neq 0$. Furthermore, due to  $u_0 \in K_{\varphi_+}$, it holds $\varphi_+'(u_0)=0$, that is
	\begin{align*}
		 & \psi\l(\Psi_{\mathcal{H}}(\nabla u_0)\r)
		\int_\Omega \left(|\nabla u_0|^{p-2}\nabla u_0 + \mu(x) |\nabla u_0|^{q-2}\nabla u_0 \right)\cdot \nabla v\,\mathrm{d} x
		=\int_\Omega  f\l(x,u_0^+\r)v\,\mathrm{d} x
	\end{align*}
	for all $v \in  W^{1, \mathcal{H} }_0 ( \Omega )$. Taking $v=-u_0^-\in  W^{1, \mathcal{H} }_0 ( \Omega )$ and noticing that
	\begin{equation*}
		\psi(\Phi_{\mathcal{H}} (\nabla u_0)) \geq \frac{b_0}{q^{\vartheta-1}} [ \rho_{\mathcal{H}} (\nabla u_0) ]^{\vartheta-1} \geq \frac{b_0}{q^{\vartheta-1}} [ \rho_{\mathcal{H}} (\nabla u_0^-) ]^{\vartheta-1},
	\end{equation*}
	we obtain
	\begin{align*}
		\rho_{\mathcal{H}}(u_0^-)=0.
	\end{align*}
	Hence, by the definition of $\|\cdot\|$, it follows that $\|u_0^-\|=0$. So $u_0 \geq 0, u_0\neq 0$. Similarly, we show that $v_0 \leq 0, v_0 \neq 0$.
\end{proof}

\section{Least energy sign-changing solution}\label{Section_4}

This section is concerned with the existence of a least energy sign-changing solution of problem \eqref{problem}. For this purpose we introduce the following constraint set
\begin{align*}
	\mathcal{M}=\Big\{u \in  W^{1, \mathcal{H} }_0 ( \Omega )\colon u^{\pm}\neq 0,\, \l\langle \varphi'(u),u^+ \r\rangle= \l\langle \varphi'(u),-u^- \r\rangle=0 \Big\}.
\end{align*}

\begin{proposition}
	\label{proposition_nodal_unique-pair}
	Let hypotheses \eqref{H1}--\eqref{H3} be satisfied and let $u \in  W^{1, \mathcal{H} }_0 ( \Omega )$ with $u^{\pm}\neq 0$. Then there exists a unique pair of positive numbers $(\alpha_u,\beta_u)$ such that $\alpha_uu^+-\beta_uu^-\in \mathcal{M}$. Moreover, if $u\in \mathcal{M}$,
	\begin{align*}
		\varphi(t_1u^+-t_2u^-)\leq \varphi(u^+-u^-)=\varphi(u)
	\end{align*}
	for all $t_1,t_2\geq 0$ with strict inequality when $(t_1,t_2)\neq (1,1)$. Furthermore, for $u\in\mathcal{M}$ we have that
	\begin{enumerate}
		\item[\textnormal{(i)}]
			if $\alpha>1$ and $0 < \beta \leq \alpha$, then $ \langle \varphi' (\alpha u^+-\beta u^-),\alpha u^+\rangle < 0$;
		\item[\textnormal{(ii)}]
			if $\alpha<1$ and $0 < \alpha \leq \beta$, then $ \langle \varphi' (\alpha u^+-\beta u^-),\alpha u^+\rangle > 0$;
		\item[\textnormal{(iii)}]
			if $\beta>1$ and $0 < \alpha \leq \beta$, then $ \langle \varphi' (\alpha u^+-\beta u^-),-\beta u^-\rangle < 0$;
		\item[\textnormal{(iv)}]
			if $\beta<1$ and $0 < \beta \leq \alpha$, then $ \langle \varphi' (\alpha u^+-\beta u^-),-\beta u^-\rangle > 0$.
	\end{enumerate}
\end{proposition}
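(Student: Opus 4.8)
The plan is to reduce the entire statement to a study of the scalar function
\[
	\Theta\colon [0,\infty)^2 \to \R, \qquad \Theta(s,t)=\varphi\bigl(su^+-tu^-\bigr).
\]
Since $u^+,u^-$ (and hence $\nabla u^+,\nabla u^-$) have disjoint supports, one has $(su^+-tu^-)^+=su^+$, $(su^+-tu^-)^-=tu^-$, so
\[
	\Phi_{\mathcal{H}}\bigl(\nabla(su^+-tu^-)\bigr)=\tfrac{s^p}{p}\|\nabla u^+\|_p^p+\tfrac{t^p}{p}\|\nabla u^-\|_p^p+\tfrac{s^q}{q}\|\nabla u^+\|_{q,\mu}^q+\tfrac{t^q}{q}\|\nabla u^-\|_{q,\mu}^q,
\]
while $\int_\Omega F(x,su^+-tu^-)\,\mathrm{d}x=\int_\Omega F(x,su^+)\,\mathrm{d}x+\int_\Omega F(x,-tu^-)\,\mathrm{d}x$. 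Writing $g_1(s,t)=\langle\varphi'(su^+-tu^-),su^+\rangle$ and $g_2(s,t)=\langle\varphi'(su^+-tu^-),-tu^-\rangle$, a direct computation gives $s\,\partial_s\Theta=g_1$ and $t\,\partial_t\Theta=g_2$ for $s,t>0$. Thus $su^+-tu^-\in\mathcal{M}$ if and only if $(s,t)\in(0,\infty)^2$ is a critical point of $\Theta$, and the four expressions in (i)--(iv) are exactly $g_1,g_2$ evaluated at $(\alpha,\beta)$. Everything therefore becomes a statement about the unique interior critical point and the global maximum of $\Theta$.

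I would prove (i)--(iv) first, as I expect this to be the \emph{crux}. Fix $u\in\mathcal{M}$, so $g_1(1,1)=g_2(1,1)=0$. Dividing $g_1(\alpha,\beta)$ by $\alpha^{q\vartheta}$ splits it as $I(\alpha,\beta)-II(\alpha)$, where $II(\alpha)=\int_\Omega \frac{f(x,\alpha u^+)}{(\alpha u^+)^{q\vartheta-1}}(u^+)^{q\vartheta}\,\mathrm{d}x$ is \emph{strictly increasing} in $\alpha$ by \eqref{H3}\eqref{H3v}, and $I(\alpha,\beta)=\psi(\Phi_{\mathcal{H}}(\nabla w))(\alpha^p\|\nabla u^+\|_p^p+\alpha^q\|\nabla u^+\|_{q,\mu}^q)/\alpha^{q\vartheta}$ with $w=\alpha u^+-\beta u^-$. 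The obstacle is that $\Phi_{\mathcal{H}}(\nabla w)$ couples $\alpha$ and $\beta$ through the Kirchhoff term, so $I$ is not monotone in $\alpha$ alone. The remedy is to note that on $\{\beta\le\alpha\}$ one has $\Phi_{\mathcal{H}}(\nabla w)\le R(\alpha):=\tfrac{\alpha^p}{p}\|\nabla u\|_p^p+\tfrac{\alpha^q}{q}\|\nabla u\|_{q,\mu}^q$ (and $\ge R(\alpha)$ on $\{\beta\ge\alpha\}$); since $\psi$ is nondecreasing, it suffices to show the decoupled function $\bar I(\alpha):=\psi(R(\alpha))(\alpha^p\|\nabla u^+\|_p^p+\alpha^q\|\nabla u^+\|_{q,\mu}^q)/\alpha^{q\vartheta}$ is nonincreasing. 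This follows from a logarithmic-derivative estimate: writing $R$ and the numerator as combinations of $\alpha^p,\alpha^q$ one gets $\alpha R'/R\le q$ and the analogous bound for the numerator, whence $\alpha\bar I'(\alpha)/\bar I(\alpha)\le(\vartheta-1)q+q-q\vartheta=0$, the exponent relation $p<q\le q\vartheta$ from \eqref{H1}--\eqref{H2} being exactly what makes this vanish. Because $\bar I(1)=\psi(\Phi_{\mathcal{H}}(\nabla u))(\|\nabla u^+\|_p^p+\|\nabla u^+\|_{q,\mu}^q)=II(1)$, in case (i) ($\alpha>1$, $\beta\le\alpha$) we obtain $I(\alpha,\beta)\le\bar I(\alpha)\le\bar I(1)=II(1)<II(\alpha)$, so $g_1(\alpha,\beta)<0$; case (ii) reverses all inequalities, and (iii)--(iv) are the symmetric statements for $g_2$ built from $u^-$ and the negative branch of \eqref{H3}\eqref{H3v}.

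For the existence of $(\alpha_u,\beta_u)$ for an arbitrary $u$ with $u^\pm\neq0$, I would maximize $\Theta$ over a large square $[0,R]^2$. On one side, $\Theta(s,t)\to-\infty$ as $\max\{s,t\}\to\infty$: bounding $\Psi[\Phi_{\mathcal{H}}(\nabla w)]\le C(s^{q\vartheta}+t^{q\vartheta}+1)$ and invoking the superlinear lower bound \eqref{c4354} for $F$ (cf.\ the proof of Proposition \ref{proposition_unbounded_below} and \eqref{superlinear-F}), the nonlinear part dominates because $u^\pm\neq0$, so the maximum over $[0,R]^2$ is attained for $R$ large. On the other side, the maximizer cannot lie on the axes: for small $s>0$ the sign of $s\,\partial_s\Theta=g_1$ is governed by the leading elastic term, which by \eqref{H3}\eqref{H3iii} strictly dominates the $f$-contribution (using the $|s|^{p-2}s$ normalization when $a_0>0$ and the $|s|^{p\vartheta-2}s$ one when $a_0=0$), so $\Theta(\cdot,t)$ is increasing near $0$, and likewise in $t$. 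Hence the maximizer lies in $(0,\infty)^2$ and is a critical point of $\Theta$, i.e.\ $\alpha_u u^+-\beta_u u^-\in\mathcal{M}$.

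Finally, uniqueness follows from (i)--(iv) by rescaling. If $(\alpha_i,\beta_i)$, $i=1,2$, both produce elements of $\mathcal{M}$, put $w_1=\alpha_1u^+-\beta_1u^-\in\mathcal{M}$ and $a=\alpha_2/\alpha_1$, $b=\beta_2/\beta_1$, so that $w_2=aw_1^+-bw_1^-\in\mathcal{M}$, i.e.\ the quantities in (i)--(iv) for $w_1$ all vanish at $(a,b)$. Assuming $(a,b)\neq(1,1)$ and splitting into $a\ge b$ and $a<b$, each of the subcases $\{a>1\}$, $\{a\le1\}$, $\{b>1\}$, $\{b\le1\}$ contradicts exactly one of (i)--(iv); hence $(a,b)=(1,1)$ and the pair is unique. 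The ``moreover'' part is then immediate: if $u\in\mathcal{M}$ then $u=1\cdot u^+-1\cdot u^-\in\mathcal{M}$, so uniqueness forces $(\alpha_u,\beta_u)=(1,1)$; by the previous step the global maximum of $\Theta$ over $[0,\infty)^2$ is attained only at interior critical points, of which there is exactly one, namely $(1,1)$, whence $\varphi(t_1u^+-t_2u^-)=\Theta(t_1,t_2)\le\Theta(1,1)=\varphi(u)$ with strict inequality for $(t_1,t_2)\neq(1,1)$. I expect the monotonicity of the Kirchhoff-coupled function $\bar I$ in the second paragraph, forced by the exponent chain $p<q\le q\vartheta$, to be the main technical obstacle, with the remaining steps amounting to bookkeeping around it.
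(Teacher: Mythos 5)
Your proposal is correct, and it reorganizes the argument in a way that differs from the paper at one genuine point. For the existence of $(\alpha_u,\beta_u)$ the paper's Step I establishes sign conditions for the map $\Lambda_u(\alpha,\beta)=\bigl(\langle\varphi'(\alpha u^+-\beta u^-),\alpha u^+\rangle,\langle\varphi'(\alpha u^+-\beta u^-),-\beta u^-\rangle\bigr)$ on the boundary of a square $[\eta_1,\eta_2]^2$ and invokes the Poincar\'e--Miranda theorem, and only afterwards (Step IV) proves coercivity of $\Upsilon_u(\alpha,\beta)=\varphi(\alpha u^+-\beta u^-)$ and exclusion of the axes to identify $(\alpha_u,\beta_u)$ as the unique global maximizer. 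You instead obtain existence directly by maximizing $\Theta=\Upsilon_u$ over the closed quadrant, using the identities $s\,\partial_s\Theta=g_1$, $t\,\partial_t\Theta=g_2$; this merges the paper's Steps I and IV, is more elementary (no degree-theoretic input at this stage), and delivers the ``moreover'' maximum statement for free. Your proof of (i)--(iv) is, underneath the logarithmic-derivative packaging, the same estimate as the paper's Step II: both replace $\Phi_{\mathcal H}(\nabla(\alpha u^+-\beta u^-))$ by its value with $\beta$ set equal to $\alpha$ (legitimate on $\{\beta\le\alpha\}$ resp.\ $\{\beta\ge\alpha\}$ by monotonicity of $\psi$), exploit $p<q$ and $\frac{\psi'(s)s}{\psi(s)}\le\vartheta-1$ to compare against the rescaling exponent $q\vartheta$, and finish with the strict monotonicity of $s\mapsto f(x,s)/|s|^{q\vartheta-1}$ from \eqref{H3}\eqref{H3v}; the paper runs this by contradiction while you run it as a direct monotonicity statement for $\bar I$, but the exponent arithmetic $(\vartheta-1)q+q-q\vartheta=0$ is identical. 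The uniqueness step (rescaling to reduce to the case $u\in\mathcal M$ and then reading off $(1,1)$ from (i)--(iv)) coincides with the paper's Step III. All the individual claims check out: the decoupling of $\Phi_{\mathcal H}$ and of $\int_\Omega F$ via disjoint supports, the bounds $\alpha R'/R\le q$ and $\alpha N'/N\le q$, the equality $\bar I(1)=II(1)$ coming from $\langle\varphi'(u),u^+\rangle=0$, the coercivity via \eqref{superlinear-F}, and the exclusion of the axes via \eqref{H3}\eqref{H3iii} with the correct case distinction between $a_0>0$ and $a_0=0$.
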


\begin{proof}
	We divide the proof into four steps.

	{\bf Step I:} We show the existence of $(\alpha_u,\beta_u) \in (0,\infty)\times (0,\infty)$ such that $\alpha_uu^+-\beta_uu^-\in \mathcal{M}$.

	By hypothesis \eqref{H3}\eqref{H3v} we have for $t\in (0,1)$ and for $|u(x)|>0$ a.e.\,$\in\Omega$ that
	\begin{align*}
		\frac{f(x,tu)(tu)}{t^{q\vartheta}|u|^{q\vartheta}} \leq \frac{f(x,u)u}{|u|^{q\vartheta}} \quad \text{for a.a.\,}x\in \Omega,
	\end{align*}
	which implies
	\begin{align}
		\label{n4}
		f(x,tu)u \leq t^{q\vartheta-1}f(x,u)u \quad \text{for a.a.\,}x\in \Omega.
	\end{align}
	Applying \eqref{n4} we have
	\begin{align*}
		& \l\langle \varphi'\l(\alpha u^+-\beta u^-\r),\alpha u^+\r\rangle  \\
		& =\l(a_0+b_0\l(\frac{1}{p}\l\|\nabla \l(\alpha u^+-\beta u^-\r) \r\|_p^p+\frac{1}{q}\l\|\nabla \l(\alpha u^+-\beta u^-\r) \r\|_{q,\mu}^q\r)^{\vartheta-1}\r) \\
		& \quad\times \l(\l\|\nabla \l(\alpha u^+\r) \r\|_p^p+\l\|\nabla \l(\alpha u^+\r) \r\|_{q,\mu}^q\r)  \\
		& \quad -\int_\Omega  f\l(x,\alpha u^+\r)\alpha u^+\,\mathrm{d} x  \\
		& \geq \frac{b_0}{p^{\vartheta-1}} \alpha^{p\vartheta} \l\|\nabla u^+\r\|_p^{p\vartheta}-\alpha^{q\vartheta}\int_\Omega  f\l(x, u^+\r) u^+\,\mathrm{d} x>0
	\end{align*}
	for $\alpha>0$ small enough and for all $\beta\geq 0$ due to $p<q$. Similarly, it holds
	\begin{align*}
		& \l\langle \varphi'\l(\alpha u^+-\beta u^-\r),-\beta u^-\r\rangle   \\
		& =\l(a_0+b_0\l(\frac{1}{p}\l\|\nabla \l(\alpha u^+-\beta u^-\r) \r\|_p^p+\frac{1}{q}\l\|\nabla \l(\alpha u^+-\beta u^-\r) \r\|_{q,\mu}^q\r)^{\vartheta-1}\r) \\
		& \quad\times \l(\l\|\nabla \l(\beta u^-\r) \r\|_p^p+\l\|\nabla \l(\beta u^-\r) \r\|_{q,\mu}^q\r) \\
		& \quad -\int_\Omega  f\l(x,-\beta u^-\r)\l(-\beta u^-\r)\,\mathrm{d} x  \\
		& \geq \frac{b_0}{p^{\vartheta-1}} \beta^{p\vartheta} \l\|\nabla u^-\r\|_p^{p\vartheta}-\beta^{q\vartheta}\int_\Omega  f\l(x, -u^-\r) \l(-u^-\r)\,\mathrm{d} x>0
	\end{align*}
	for $\beta>0$ small enough and for all $\alpha\geq 0$, again because of $p<q$. Hence, there exists a number $\eta_1>0$ such that
	\begin{align}
		\label{G1}
		 & \l\langle \varphi'\l(\eta_1 u^+-\beta u^-\r),\eta_1 u^+\r\rangle>0
		\quad\text{and}\quad
		\l\langle \varphi'\l(\alpha u^+-\eta_1 u^-\r),-\eta_1 u^-\r\rangle>0
	\end{align}
	for all $\alpha,\beta\geq 0$. Now we choose $\eta_2>\max\{1,\eta_1\}$. Then, for $\beta \in [0,\eta_2]$ we get
	\begin{align*}
		& \frac{\l\langle \varphi'\l(\eta_2 u^+-\beta u^-\r),\eta_2 u^+\r\rangle}{\eta_2^{q\vartheta}}   \\
		& =\frac{a_0 \l(\l\|\nabla \l(\eta_2 u^+\r) \r\|_p^p+\l\| \nabla \l(\eta_2 u^+\r) \r\|_{q,\mu}^q\r)}{\eta_2^{q\vartheta}}  \\
		 & \quad +\frac{b_0\l(\frac{1}{p}\l\|\nabla \l(\eta_2 u^+-\beta u^-\r) \r\|_p^p+\frac{1}{q}\l\|\nabla \l(\eta_2 u^+-\beta u^-\r) \r\|_{q,\mu}^q\r)^{\vartheta-1}
		}{\eta_2^{q\vartheta}}  \\
		& \quad\times \l(\l\|\nabla \l(\eta_2 u^+\r) \r\|_p^p+\l\| \nabla \l(\eta_2 u^+\r) \r\|_{q,\mu}^q\r)\\
		& \quad -\int_\Omega  \frac{f\l(x,\eta_2 u^+\r)\eta_2 u^+}{\eta_2^{q\vartheta}}\,\mathrm{d} x  \\
		& \leq a_0 \frac{1}{\eta_2^{q(\vartheta-1)}}\l(\l\|\nabla u^+ \r\|_p^p+\l\| \nabla  u^+ \r\|_{q,\mu}^q\r) \\
		& \quad +b_0
		\l(\l\|\nabla \l(u^+- u^-\r) \r\|_p^p+\l\|\nabla \l( u^+- u^-\r) \r\|_{q,\mu}^q\r)^{\vartheta}   \\
		& \quad -\int_\Omega  \frac{f\l(x,\eta_2 u^+\r)}{\l(\eta_2u^+\r)^{q\vartheta-1}}\l(u^+\r)^{q\vartheta}\,\mathrm{d} x<0
	\end{align*}
	for $\eta_2$ large enough due to hypothesis \eqref{H3}\eqref{H3ii}. In a similar way, for $\alpha \in [0,\eta_2]$, we show that
	\begin{align*}
		\frac{\l\langle \varphi'\l(\alpha u^+-\eta_2 u^-\r),-\eta_2 u^-\r\rangle}{\eta_2^{q\vartheta}}<0
	\end{align*}
	for $\eta_2$ large enough. In summary, we obtain
	\begin{align}
		\label{G2}
		\l\langle \varphi'\l( \eta_2 u^+-\beta u^-\r),\eta_2 u^+\r\rangle<0
		\quad\text{and}\quad
		\l\langle \varphi'\l(\alpha u^+-\eta_2 u^-\r),-\eta_2 u^-\r\rangle<0
	\end{align}
	for $\eta_2$ large enough and $\alpha,\beta \in [0,\eta_2]$. Consider the mapping $\Lambda_u \colon [0,\infty) \times [0,\infty) \to \R^2$ given by
	\begin{equation*}
		\Lambda_u (\alpha,\beta)
		= \left( \l\langle \varphi'\l( \alpha u^+-\beta u^-\r), \alpha u^+\r\rangle ,
		\l\langle \varphi'\l(\alpha u^+-\beta u^-\r),-\beta u^-\r\rangle \right).
	\end{equation*}
	From \eqref{G1} and \eqref{G2} along with the  Poincar\'{e}-Miranda existence theorem given in Theorem \ref{theorem-poincare-miranda}, there exist a pair $(\alpha_u,\beta_u)\in [\eta_1,\eta_2] \times [\eta_1,\eta_2] \subseteq (0,\infty)\times (0,\infty)$ such that $\Lambda_u (\alpha_u,\beta_u) = (0,0)$, that is, $\alpha_uu^+-\beta_uu^-\in \mathcal{M}$.

	{\bf Step II:} We show now the cases regarding the sign of $ \langle \varphi' (\alpha u^+-\beta u^-),\alpha u^+\rangle$ and $ \langle \varphi' (\alpha u^+-\beta u^-),-\beta u^-\rangle$ for $u \in \mathcal{M}$.

	Let $u \in \mathcal{M}$, we have
	\begin{equation}
		\label{G3}
		\begin{aligned}
			0
			&=\l\langle \varphi'(u),u^+\r\rangle\\
			&=\l(a_0 +b_0\l(\frac{1}{p}\l\|\nabla u\r\|_p^p+\frac{1}{q}\l\|\nabla u\r\|_{q,\mu}^q\r)^{\vartheta-1}\r)
			\l(\l\|\nabla u^+\r\|_p^p+\l\|\nabla u^+\r\|_{q,\mu}^q\r)\\
			&\quad -\int_\Omega  f\l(x,u^+\r)u^+\,\mathrm{d} x
		\end{aligned}
	\end{equation}
	and
	\begin{equation}
		\label{G4}
		\begin{aligned}
			0
			&=\l\langle \varphi'(u),-u^-\r\rangle\\
			&=\l(a_0 +b_0\l(\frac{1}{p}\l\|\nabla u\r\|_p^p+\frac{1}{q}\l\|\nabla u\r\|_{q,\mu}^q\r)^{\vartheta-1}\r)
			\l(\l\|\nabla u^-\r\|_p^p+\l\|\nabla u^-\r\|_{q,\mu}^q\r)\\
			&\quad -\int_\Omega  f\l(x,-u^-\r)\l(-u^-\r)\,\mathrm{d} x.
		\end{aligned}
	\end{equation}
	The proof of all cases will go by contradiction. For case \textnormal{(i)}, let $\alpha>1$ and $0 < \beta \leq \alpha$ and assume that
	\begin{equation}
		\label{G5}
		\begin{aligned}
			0
			&\leq\l\langle \varphi'\l(\alpha u^+-\beta u^-\r),\alpha u^+\r\rangle\\
			&=\l(a_0 +b_0\l(\frac{1}{p}\l\|\nabla \l(\alpha u^+-\beta u^-\r)\r\|_p^p+\frac{1}{q}\l\|\nabla \l(\alpha u^+-\beta u^-\r)\r\|_{q,\mu}^q\r)^{\vartheta-1}\r)\\
			&\quad\times\l(\l\|\nabla \l(\alpha u^+\r)\r\|_p^p+\l\|\nabla \l(\alpha u^+\r)\r\|_{q,\mu}^q\r)\\
			&\quad -\int_\Omega  f\l(x,\alpha u^+\r)\alpha u^+\,\mathrm{d} x.
		\end{aligned}
	\end{equation}
	As $\alpha>1$, we get from \eqref{G5}
	\begin{equation}
		\label{G7}
		\begin{aligned}
			0
			&\leq \l(a_0\alpha^q +b_0\alpha^{q\vartheta}\l(\frac{1}{p}\l\|\nabla u\r\|_p^p+\frac{1}{q}\l\|\nabla u\r\|_{q,\mu}^q\r)^{\vartheta-1}\r)
			\l(\l\|\nabla u^+\r\|_p^p+\l\|\nabla u^+\r\|_{q,\mu}^q\r)\\
			&\quad -\int_\Omega  f\l(x,\alpha u^+\r)\l(\alpha u^+\r)\,\mathrm{d} x.
		\end{aligned}
	\end{equation}
	Dividing \eqref{G7} by $\alpha^{q\vartheta}$ and combining it with \eqref{G3} gives
	\begin{equation}
		\label{G8}
		\begin{aligned}
			&\int_\Omega  \l(\frac{f\l(x,\alpha u^+\r)}{\l(\alpha u^+\r)^{q\vartheta-1}}-\frac{f\l(x, u^+\r)}{\l( u^+\r)^{q\vartheta-1}}\r)\l(u^+\r)^{q\vartheta}\,\mathrm{d} x\\
			&\leq a_0\l(\frac{1}{\alpha^{q(\vartheta-1)}}-1\r)
			\l(\l\|\nabla u^+\r\|_p^p+\l\|\nabla u^+\r\|_{q,\mu}^q\r).
		\end{aligned}
	\end{equation}
	Because of hypothesis \eqref{H3}\eqref{H3v}, the left-hand side of \eqref{G8} is strictly positive, but the right-hand side of \eqref{G8} is nonpositive, a contradiction. For case \textnormal{(ii)}, let $\alpha<1$ and $0 < \alpha \leq \beta$ and assume that \eqref{G5} holds in the opposite direction. Arguing analogously, this yields \eqref{G7} in the opposite direction, so we obtain \eqref{G8} in the opposite direction. But this is again a contradiction, since the left-hand side is strictly negative and the right-hand side is nonnegative. For case \textnormal{(iii)}, let $\beta>1$ and $0 < \alpha \leq \beta$ and assume
	\begin{equation}
		\label{G6}
		\begin{aligned}
			0
			&\leq\l\langle \varphi'\l(\alpha u^+-\beta u^-\r),-\beta u^-\r\rangle\\
			&=\l(a_0 +b_0\l(\frac{1}{p}\l\|\nabla \l(\alpha u^+-\beta u^-\r)\r\|_p^p+\frac{1}{q}\l\|\nabla \l(\alpha u^+-\beta u^-\r)\r\|_{q,\mu}^q\r)^{\vartheta-1}\r)\\
			&\quad\times\l(\l\|\nabla \l(\beta u^-\r)\r\|_p^p+\l\|\nabla \l(\beta u^-\r)\r\|_{q,\mu}^q\r)\\
			&\quad -\int_\Omega  f\l(x,-\beta u^-\r)\l(-\beta u^-\r)\,\mathrm{d} x.
		\end{aligned}
	\end{equation}
	As $\beta>1$, we get from \eqref{G6}
	\begin{equation}
		\label{G9}
		\begin{aligned}
			0
			&\leq \l(a_0\beta^q +b_0\beta^{q\vartheta}\l(\frac{1}{p}\l\|\nabla u\r\|_p^p+\frac{1}{q}\l\|\nabla u\r\|_{q,\mu}^q\r)^{\vartheta-1}\r)
			\l(\l\|\nabla u^-\r\|_p^p+\l\|\nabla u^-\r\|_{q,\mu}^q\r)\\
			&\quad -\int_\Omega  f\l(x,-\beta u^-\r)\l(-\beta u^-\r)\,\mathrm{d} x.
		\end{aligned}
	\end{equation}
	Dividing \eqref{G9} by $\beta^{q\vartheta}$ and combining it with \eqref{G4} gives
	\begin{equation}
		\label{G10}
		\begin{aligned}
			&\int_\Omega  \l(\frac{f\l(x,-u^-\r)}{\l(u^-\r)^{q\vartheta-1}}-\frac{f\l(x,-\beta u^-\r)}{\l(\beta u^-\r)^{q\vartheta-1}}\r)\l(u^-\r)^{q\vartheta}\,\mathrm{d} x\\
			&\leq a_0\l(\frac{1}{\beta^{q(\vartheta-1)}}-1\r)
			\l(\l\|\nabla u^-\r\|_p^p+\l\|\nabla u^-\r\|_{q,\mu}^q\r).
		\end{aligned}
	\end{equation}
	Because of hypothesis \eqref{H3}\eqref{H3v}, the left-hand side of \eqref{G10} is strictly positive, but the right-hand side of \eqref{G10} is nonpositive, a contradiction. For case \textnormal{(iv)}, let $\beta<1$ and $0 < \beta \leq \alpha$ and assume that \eqref{G6} holds in the opposite direction. Arguing analogously, this yields \eqref{G9} in the opposite direction, so we obtain \eqref{G10} in the opposite direction. But this is again a contradiction, since the left-hand side is strictly negative and the right-hand side is nonnegative.

	{\bf Step III:} In this step we will show that the pair $(\alpha_u,\beta_u)$ obtained in Step I is unique.

	{\it Case (a):}  $u\in \mathcal{M}$.

	We are going to prove that $(\alpha_u,\beta_u)=(1,1)$ is the unique pair of numbers such that $\alpha_u u^+-\beta_u u^-\in \mathcal{M}$. To this end, let $(\alpha_0,\beta_0)\in (0,\infty)\times (0,\infty)$ be such that $\alpha_0 u^+-\beta_0 u^-\in \mathcal{M}$. If $0<\alpha_0\leq\beta_0$, the cases \textnormal{(ii)} and \textnormal{(iii)} from Step II imply $1 \leq \alpha_0\leq\beta_0 \leq 1$, i.e.\,$\alpha_0=\beta_0=1$. Alternatively, if $0<\beta_0\leq\alpha_0$, the cases \textnormal{(i)} and \textnormal{(iv)} from Step II imply $1 \leq \beta_0\leq\alpha_0 \leq 1$, i.e.\,$\alpha_0=\beta_0=1$.

	{\it Case (b):}  $u\not\in \mathcal{M}$.

	We suppose there exist $(\alpha_1,\beta_1)$, $(\alpha_2,\beta_2)$ such that
	\begin{align*}
		\tau_1:=\alpha_1 u^+-\beta_1 u^-\in \mathcal{M}
		\quad\text{and}\quad
		\tau_2:=\alpha_2 u^+-\beta_2 u^-\in \mathcal{M}.
	\end{align*}
	Then we obtain
	\begin{align}
		\label{G11}
		\tau_2=\l(\frac{\alpha_2}{\alpha_1}\r)\alpha_1u^+-\l(\frac{\beta_2}{\beta_1}\r)\beta_1u^-
		=\l(\frac{\alpha_2}{\alpha_1}\r)\tau_1^+-\l(\frac{\beta_2}{\beta_1}\r)\tau_1^- \in \mathcal{M}.
	\end{align}
	Since $\tau_1\in \mathcal{M}$, we know from Case (a) that the pair $(1,1)$ such that $1\cdot \tau_1^+-1\cdot \tau_1^-\in \mathcal{M}$ is unique. Therefore, combining this with \eqref{G11}, it follows
	\begin{align*}
		\frac{\alpha_2}{\alpha_1}=\frac{\beta_2}{\beta_1}=1,
	\end{align*}
	which implies $\alpha_1=\alpha_2$ and $\beta_1=\beta_2$.

	{\bf Step IV:} Let $\Upsilon_u\colon [0,\infty)\times [0,\infty)\to \R$ be given by
	\begin{align*}
		\Upsilon_u(\alpha,\beta)=\varphi\l(\alpha u^+-\beta u^-\r).
	\end{align*}
	We will prove that the unique pair $(\alpha_u,\beta_u)$ from Step I and III is the unique maximum point of $\Upsilon_u$ on $[0,\infty)\times [0,\infty)$.

	The idea is first to show that $\Upsilon_u$ has a maximum and then we show that it cannot be achieved at a boundary point of $[0,\infty)\times [0,\infty)$. Then the assertion follows.

	Let $\alpha, \beta \geq 1$ and assume, without any loss of generality, that $\alpha \geq \beta\geq 1$. Then we have
	\begin{equation}
		\label{G12}
		\begin{aligned}
			\frac{\Upsilon_u(\alpha,\beta)}{\alpha^{q\vartheta}}
			&=\frac{\varphi(\alpha u^+-\beta u^-)}{\alpha^{q\vartheta}}\\
			&=\frac{a_0\l(\frac{1}{p}\l\|\nabla \l(\alpha u^+-\beta u^-\r)\r\|_p^p+\frac{1}{q}\l\|\nabla \l(\alpha u^+-\beta u^-\r)\r\|_{q,\mu}^q\r)}{\alpha^{q\vartheta}}\\
			&\quad +\frac{\frac{b_0}{\vartheta}\l(\frac{1}{p}\l\|\nabla \l(\alpha u^+-\beta u^-\r)\r\|_p^p+\frac{1}{q}\l\|\nabla \l(\alpha u^+-\beta u^-\r)\r\|_{q,\mu}^q\r)^\vartheta}{\alpha^{q\vartheta}}\\
			&\quad -\int_\Omega  \frac{F\l(x,\alpha u^+-\beta u^-\r)}{\alpha^{q\vartheta}}\,\mathrm{d} x\\
			&\leq \frac{1}{\alpha^{q(\vartheta-1)}} a_0\l(\frac{1}{p}\l\|\nabla u\r\|_p^p+\frac{1}{q}\l\|\nabla u\r\|_{q,\mu}^q\r)\\
			&\quad+\frac{b_0}{\vartheta}\l(\frac{1}{p}\l\|\nabla u\r\|_p^p+\frac{1}{q}\l\|\nabla u\r\|_{q,\mu}^q\r)^\vartheta\\
			&\quad -\int_\Omega  \frac{F\l(x,\alpha u^+\r)}{\l(\alpha u^+\r)^{q\vartheta}}\l(u^+\r)^{q\vartheta}\,\mathrm{d} x-\int_\Omega  \frac{F\l(x,-\beta u^-\r)}{\l|-\beta u^-\r|^{q\vartheta}}\frac{\beta^{q\vartheta} \l(u^-\r)^{q\vartheta}}{\alpha^{q\vartheta}}\,\mathrm{d} x.
		\end{aligned}
	\end{equation}
	From \eqref{superlinear-F} we know that
	\begin{align*}
		\lim_{\alpha\to \infty}\l(-\int_\Omega  \frac{F\l(x,\alpha u^+\r)}{\l(\alpha u^+\r)^{q\vartheta}}\l(u^+\r)^{q\vartheta}\,\mathrm{d} x\r) = -\infty
	\end{align*}
	and
	\begin{align*}
		\limsup_{\substack{\beta\to \infty \\ \alpha\geq \beta}}\l(-\int_\Omega  \frac{F\l(x,-\beta u^-\r)}{\l|-\beta u^-\r|^{q\vartheta}}\frac{\beta^{q\vartheta} \l(u^-\r)^{q\vartheta}}{\alpha^{q\vartheta}}\,\mathrm{d} x\r)\leq 0.
	\end{align*}
	Using this with \eqref{G12}, we see that $\lim_{|(\alpha,\beta)|\to\infty}\Upsilon_u(\alpha,\beta)=-\infty$. Therefore, $\Upsilon_u$ has a maximum.

	Now we show that a maximum point of $\Upsilon_u$ cannot be achieved on the boundary of $[0,\infty)\times [0,\infty)$. Suppose via contradiction that $(0,\beta_0)$ is a maximum point of $\Upsilon_u$ with $\beta_0\geq 0$. Then we have for $\alpha> 0$
	\begin{align*}
		\Upsilon_u(\alpha,\beta_0)
		 & =\varphi(\alpha u^+-\beta_0 u^-)\\
		 & =a_0\l(\frac{1}{p}\l\|\nabla \l(\alpha u^+-\beta_0 u^-\r)\r\|_p^p+\frac{1}{q}\l\|\nabla \l(\alpha u^+-\beta_0 u^-\r)\r\|_{q,\mu}^q\r)\\
		 & \quad +\frac{b_0}{\vartheta}\l(\frac{1}{p}\l\|\nabla \l(\alpha u^+-\beta_0 u^-\r)\r\|_p^p+\frac{1}{q}\l\|\nabla \l(\alpha u^+-\beta_0 u^-\r)\r\|_{q,\mu}^q\r)^\vartheta \\
		 & \quad -\int_\Omega  F\l(x,\alpha u^+-\beta_0 u^-\r)\,\mathrm{d} x
	\end{align*}
	and
	\begin{align*}
		 & \frac{\partial \Upsilon_u(\alpha,\beta_0)}{\partial \alpha}  \\
		 & =a_0\alpha^{p-1} \l\|\nabla u^+\r\|_p^p+a_0\alpha^{q-1}\l\|\nabla u^+\r\|_{q,\mu}^q \\
		 & \quad +b_0\l(\frac{1}{p}\l\|\nabla \l(\alpha u^+-\beta_0 u^-\r)\r\|_p^p+\frac{1}{q}\l\|\nabla \l(\alpha u^+-\beta_0 u^-\r)\r\|_{q,\mu}^q\r)^{\vartheta-1} \\
		 & \quad\times\l(\alpha^{p-1} \l\|\nabla u^+\r\|_p^p+\alpha^{q-1}\l\|\nabla u^+\r\|_{q,\mu}^q\r) \\
		 & \quad -\int_\Omega  f\l(x,\alpha u^+\r)u^+\,\mathrm{d} x.
	\end{align*}
	In the case that $a_0>0$, we have
	\begin{align*}
		\frac{\partial \Upsilon_u(\alpha,\beta_0)}{\partial \alpha}
		\geq a_0\alpha^{p-1} \l\|\nabla u^+\r\|_p^p-\int_\Omega  f\l(x,\alpha u^+\r)u^+\,\mathrm{d} x.
	\end{align*}
	Dividing it by $\alpha^{p-1}$ we get
	\begin{align*}
		& \frac{1}{\alpha^{p-1}}\dfrac{\partial \Upsilon_u(\alpha,\beta_0)}{\partial \alpha} \geq a_0 \l\|\nabla u^+\r\|_p^p-\int_\Omega  \frac{f\l(x,\alpha u^+\r)}{\l(\alpha u^+\r)^{p-1}}\l(u^+\r)^p\,\mathrm{d} x,
	\end{align*}
	where the second term on the right-hand side goes to zero as $\alpha\to 0$ by hypothesis \eqref{H3}\eqref{H3iii}. Hence $\frac{\partial \Upsilon_u(\alpha,\beta_0)}{\partial \alpha}>0$ for $\alpha>0$ sufficiently small which implies that $\Upsilon_u$ is increasing with respect to $\alpha$. This is a contradiction. In the case that $a_0=0$, we have
	\begin{align*}
		\frac{\partial \Upsilon_u(\alpha,\beta_0)}{\partial \alpha}
		\geq \frac{b_0}{p^{\vartheta-1}}\alpha^{p\vartheta-1} \l\|\nabla u^+\r\|_p^{p\vartheta}-\int_\Omega  f\l(x,\alpha u^+\r)u^+\,\mathrm{d} x,
	\end{align*}
	Dividing it by $\alpha^{p\vartheta-1}$ we get
	\begin{align*}
		& \frac{1}{\alpha^{p\vartheta-1}}\dfrac{\partial \Upsilon_u(\alpha,\beta_0)}{\partial \alpha} \geq \frac{b_0}{p^{\vartheta-1}} \l\|\nabla u^+\r\|_p^{p\vartheta}-\int_\Omega  \frac{f\l(x,\alpha u^+\r)}{\l(\alpha u^+\r)^{p\vartheta-1}}\l(u^+\r)^{p\vartheta}\,\mathrm{d} x,
	\end{align*}
	where again the second term on the right-hand side goes to zero as $\alpha\to 0$ by hypothesis \eqref{H3}\eqref{H3iii}. So $\frac{\partial \Upsilon_u(\alpha,\beta_0)}{\partial \alpha}>0$ for $\alpha>0$ sufficiently small and therefore $\Upsilon_u$ is increasing with respect to $\alpha$. This is again a contradiction. In a similar way, we can show that $\Upsilon_u$ cannot achieve its global maximum at a point $(\alpha_0,0)$ with $\alpha_0\geq 0$.

	Altogether, the global maximum must be achieved in $(0,K)^2$ for some $K>0$. Thus, it is a critical point of $\Upsilon_u$ and by Steps I and III, the only critical point of $\Upsilon_u$ is $(\alpha_u,\beta_u)$.
\end{proof}

\begin{proposition}
	\label{proposition_nodal_unique-pair-less-one}
	Let hypotheses \eqref{H1}--\eqref{H3} be satisfied and let $u \in  W^{1, \mathcal{H} }_0 ( \Omega )$ with $u^{\pm}\neq 0$ such that $\langle \varphi'(u), u^+\rangle \leq 0$ and $\langle \varphi'(u),- u^-\rangle \leq 0$. Then the unique pair $(\alpha_u,\beta_u)$ obtained in Proposition \ref{proposition_nodal_unique-pair} satisfies $0<\alpha_u,\beta_u \leq 1$. Alternatively, if $\langle \varphi'(u), u^+\rangle \geq 0$ and $\langle \varphi'(u),- u^-\rangle \geq 0$, then $1\leq\alpha_u,\beta_u$.
\end{proposition}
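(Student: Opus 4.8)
The plan is to reduce everything to the four sign alternatives (i)--(iv) established for elements of $\mathcal{M}$ in Proposition~\ref{proposition_nodal_unique-pair}. First I would invoke Proposition~\ref{proposition_nodal_unique-pair} to produce the unique pair $(\alpha_u,\beta_u)\in(0,\infty)\times(0,\infty)$ with $v:=\alpha_u u^+-\beta_u u^-\in\mathcal{M}$. Since $\alpha_u,\beta_u>0$ and $u^+,u^-$ have disjoint supports, one has $v^+=\alpha_u u^+$ and $v^-=\beta_u u^-$, so setting $\hat\alpha:=1/\alpha_u$ and $\hat\beta:=1/\beta_u$ gives $u^+=\hat\alpha v^+$, $u^-=\hat\beta v^-$ and $u=\hat\alpha v^+-\hat\beta v^-$. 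The desired conclusion $0<\alpha_u,\beta_u\le 1$ is then equivalent to $\hat\alpha,\hat\beta\ge 1$ (positivity being automatic), and $1\le\alpha_u,\beta_u$ is equivalent to $\hat\alpha,\hat\beta\le 1$.

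The heart of the argument is to rewrite the hypotheses in terms of $v$ and the scalars $\hat\alpha,\hat\beta$: the assumption $\langle\varphi'(u),u^+\rangle\le 0$ becomes $\langle\varphi'(\hat\alpha v^+-\hat\beta v^-),\hat\alpha v^+\rangle\le 0$, and $\langle\varphi'(u),-u^-\rangle\le 0$ becomes $\langle\varphi'(\hat\alpha v^+-\hat\beta v^-),-\hat\beta v^-\rangle\le 0$. Now I would apply the alternatives (i)--(iv) of Proposition~\ref{proposition_nodal_unique-pair} to $v\in\mathcal{M}$ (playing the role of the generic element of $\mathcal{M}$ there) with parameters $(\hat\alpha,\hat\beta)$, arguing by contradiction. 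Assume $\hat\alpha<1$. If $\hat\alpha\le\hat\beta$, case (ii) forces $\langle\varphi'(\hat\alpha v^+-\hat\beta v^-),\hat\alpha v^+\rangle>0$, contradicting the first inequality; if instead $\hat\beta<\hat\alpha<1$, then $\hat\beta<1$ and $\hat\beta\le\hat\alpha$, so case (iv) forces $\langle\varphi'(\hat\alpha v^+-\hat\beta v^-),-\hat\beta v^-\rangle>0$, contradicting the second. Hence $\hat\alpha\ge 1$, and the entirely symmetric discussion (assuming $\hat\beta<1$ and using case (iv) when $\hat\beta\le\hat\alpha$, case (ii) when $\hat\alpha<\hat\beta<1$) yields $\hat\beta\ge 1$; thus $\alpha_u,\beta_u\le 1$.

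For the alternative statement I would run the same scheme with the reversed inequalities $\langle\varphi'(u),u^+\rangle\ge 0$ and $\langle\varphi'(u),-u^-\rangle\ge 0$, now assuming $\hat\alpha>1$ for contradiction: case (i) (when $\hat\beta\le\hat\alpha$) or case (iii) (when $\hat\alpha<\hat\beta$, so that $\hat\beta>1$) produces a strictly negative pairing, contradicting the hypothesis, and an identical discussion for $\hat\beta$ completes it. This gives $\hat\alpha,\hat\beta\le 1$, i.e.\ $1\le\alpha_u,\beta_u$.

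I expect no deep obstacle here: the reciprocal substitution $\hat\alpha=1/\alpha_u$, $\hat\beta=1/\beta_u$ is the key device, since it transfers the sign information from $u$ (which generally does not lie in $\mathcal{M}$) to the element $v\in\mathcal{M}$, to which Proposition~\ref{proposition_nodal_unique-pair} applies directly. The only point demanding care is the bookkeeping of the case split according to whether $\hat\alpha\le\hat\beta$ or $\hat\beta\le\hat\alpha$, because each of the alternatives (i)--(iv) carries an ordering constraint on its two parameters; one must check that in every configuration at least one applicable case is triggered, which the four subcases above indeed cover.
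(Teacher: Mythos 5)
Your proof is correct, but it takes a genuinely different route from the paper's. The paper argues by direct computation: assuming (say) $0<\beta_u\leq\alpha_u$ and, for contradiction, $\alpha_u>1$, it divides the identity $\l\langle\varphi'\l(\alpha_u u^+-\beta_u u^-\r),\alpha_u u^+\r\rangle=0$ by $\alpha_u^{q\vartheta}$, combines the result with the hypothesis $\langle\varphi'(u),u^+\rangle\leq 0$, and reaches a contradiction with \eqref{H3}\eqref{H3v} --- in effect repeating the computation that already proved cases \textnormal{(i)}--\textnormal{(iv)} of Proposition \ref{proposition_nodal_unique-pair}. You instead observe that $v:=\alpha_u u^+-\beta_u u^-\in\mathcal{M}$ satisfies $v^+=\alpha_u u^+$ and $v^-=\beta_u u^-$ (valid because $u^+$ and $u^-$ have disjoint supports and $\alpha_u,\beta_u>0$), so that $u=\hat\alpha v^+-\hat\beta v^-$ with $\hat\alpha=1/\alpha_u$, $\hat\beta=1/\beta_u$, and the sign hypotheses on $u$ are precisely the pairings controlled by cases \textnormal{(i)}--\textnormal{(iv)} applied to the element $v$ of $\mathcal{M}$. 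Your case bookkeeping (splitting on the order of $\hat\alpha$ and $\hat\beta$ so that one of \textnormal{(ii)}/\textnormal{(iv)}, respectively \textnormal{(i)}/\textnormal{(iii)}, always applies) is complete, and each branch yields an immediate contradiction. What your route buys is that no new estimate is needed and the logical dependence on the already-established sign alternatives is made explicit --- it is the same device the paper itself uses in Step III, Case (a) of Proposition \ref{proposition_nodal_unique-pair} to prove uniqueness; what the paper's version buys is a self-contained argument within the proposition. Both ultimately rest on the same mechanism, the strict monotonicity in \eqref{H3}\eqref{H3v}.
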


\begin{proof}
	First, we suppose that $0<\beta_u\leq \alpha_u$. Since $\alpha_u u^+-\beta_uu^-\in \mathcal{M}$, we have $\l\langle\varphi'\l(\alpha_u u^+-\beta_u u^-\r),\alpha_u u^+ \r\rangle=0$, that is,
	\begin{equation}
		\label{G13}
		\begin{aligned}
			&\l(a_0+b_0\l(\frac{1}{p}\l\|\nabla \l(\alpha_u u^+-\beta_u u^-\r) \r\|_p^p+\frac{1}{q}\l\|\nabla \l(\alpha_u u^+-\beta_u u^-\r) \r\|_{q,\mu}^q\r)^{\vartheta-1}\r)\\
			&\times \l(\l\|\nabla \l(\alpha_u u^+\r) \r\|_p^p+\l\|\nabla \l(\alpha_u u^+\r) \r\|_{q,\mu}^q\r)\\
			&-\int_\Omega  f\l(x,\alpha_u u^+\r)\alpha_u u^+\,\mathrm{d} x=0.
		\end{aligned}
	\end{equation}
	From the assumptions, we have $\langle \varphi'(u),u^+\rangle \leq 0$ which reads as
	\begin{equation}
		\label{G14}
		\begin{aligned}
			&\l(a_0+b_0\l(\frac{1}{p}\l\|\nabla u \r\|_p^p+\frac{1}{q}\l\|\nabla u \r\|_{q,\mu}^q\r)^{\vartheta-1}\r)
			\l(\l\|\nabla u^+ \r\|_p^p+\l\|\nabla u^+\r\|_{q,\mu}^q\r)\\
			&-\int_\Omega  f\l(x,u^+\r)u^+\,\mathrm{d} x \leq 0.
		\end{aligned}
	\end{equation}
	Assuming $\alpha_u > 1$ and dividing \eqref{G13} by $\alpha_u^{q\vartheta}$ it follows
	\begin{equation}
		\label{G15}
		\begin{aligned}
			0
			&\leq \l(a_0\frac{1}{\alpha_u^{(\vartheta-1)q}}+b_0\l(\frac{1}{p}\l\|\nabla u\r\|_p^p+\frac{1}{q}\l\|\nabla u \r\|_{q,\mu}^q\r)^{\vartheta-1}\r)\\
			&\quad\times\l(\l\|\nabla u^+\r\|_p^p+\l\|\nabla u^+ \r\|_{q,\mu}^q\r)\\
			&\quad -\int_\Omega  \frac{f\l(x,\alpha_u u^+\r)}{\l(\alpha_u u^+\r)^{q\vartheta-1}} \l(u^+\r)^{q\vartheta}\,\mathrm{d} x.
		\end{aligned}
	\end{equation}
	Combining \eqref{G14} and \eqref{G15} gives
	\begin{equation}
		\label{TH34}
		\begin{aligned}
			&\int_\Omega  \l(\frac{f\l(x,\alpha_u u^+\r)}{\l(\alpha_u u^+\r)^{q\vartheta-1}} - \frac{f\l(x,u^+\r)}{\l(u^+\r)^{q\vartheta-1}} \r) \l(u^+\r)^{q\vartheta}\,\mathrm{d} x\\
			&\leq a_0\l(\frac{1}{\alpha_u^{(\vartheta-1)q}}-1\r)
			\l(\l\|\nabla u^+\r\|_p^p+\l\|\nabla u^+ \r\|_{q,\mu}^q\r).
		\end{aligned}
	\end{equation}
	Because of hypotheses \eqref{H3}\eqref{H3v} we see that this is a contradiction. Therefore, $0<\beta_u\leq \alpha_u\leq1$. If $0<\alpha_u\leq \beta_u$, we use instead $\l\langle\varphi'\l(\alpha_u u^+-\beta_u u^-\r),-\beta_u u^- \r\rangle=0$ and $\langle \varphi'(u),- u^-\rangle \leq 0$. Assuming $\beta_u > 1$ and arguing analogously to \eqref{G15} and \eqref{TH34}, we get
	\begin{equation*}
		\begin{aligned}
			&-\int_\Omega  \l(\frac{f\l(x,-\beta_u u^-\r)}{\l(\beta_u u^-\r)^{q\vartheta-1}} - \frac{f\l(x,-u^-\r)}{\l(u^-\r)^{q\vartheta-1}} \r) \l(u^-\r)^{q\vartheta}\,\mathrm{d} x\\
			&\leq a_0\l(\frac{1}{\beta_u^{(\vartheta-1)q}}-1\r)
			\l(\l\|\nabla u^-\r\|_p^p+\l\|\nabla u^- \r\|_{q,\mu}^q\r),
		\end{aligned}
	\end{equation*}
	which is again a contradiction and hence $0<\alpha_u\leq \beta_u \leq 1$. The case with $\langle \varphi'(u), u^+\rangle \geq 0$ and $\langle \varphi'(u),- u^-\rangle \geq 0$ works in the same way but doing the inequalities in the opposite direction.
\end{proof}

Let $m_0=\inf\limits_{\mathcal{M}}  \varphi$.

\begin{proposition}\label{proposition_positive_infimum_1}
	Let hypotheses \eqref{H1}--\eqref{H3} be satisfied. Then $m_0>0$ and so the infimum is finite.
\end{proposition}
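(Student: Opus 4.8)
The plan is to combine the maximization property of $\varphi$ along rays through elements of $\mathcal{M}$, established in Proposition \ref{proposition_nodal_unique-pair}, with the lower bound of Proposition \ref{proposition_auxiliary_result}, using a scaling argument to make the estimate uniform over $\mathcal{M}$.

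First I would record that $\mathcal{M}\neq\emptyset$: for any $u$ with $u^{\pm}\neq 0$, Step I of Proposition \ref{proposition_nodal_unique-pair} yields $\alpha_u,\beta_u>0$ with $\alpha_u u^+-\beta_u u^-\in\mathcal{M}$, so $m_0<+\infty$ and only the lower bound $m_0>0$ remains. Fix $u\in\mathcal{M}$; then $u\neq 0$, hence $\|u\|>0$, and taking $t_1=t_2=t$ in Proposition \ref{proposition_nodal_unique-pair} gives $\varphi(u)\geq\varphi(tu^+-tu^-)=\varphi(tu)$ for all $t\geq 0$.

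The naive idea of applying Proposition \ref{proposition_auxiliary_result} directly to $u$ fails, because that lower bound is positive only for small $\|u\|$ and becomes negative once $\|u\|$ is large, while elements of $\mathcal{M}$ may have arbitrarily large norm; this is the main point to get around. The remedy is to evaluate the inequality $\varphi(u)\geq\varphi(tu)$ at a scale of our choosing. I would fix once and for all a number $\rho_0\in(0,1)$ and set $t=\rho_0/\|u\|>0$, so that $\|tu\|=\rho_0\leq 1$. Then Proposition \ref{proposition_auxiliary_result} yields, when $a_0=0$,
\[
\varphi(u)\geq\varphi(tu)\geq\hat{C}\rho_0^{q\vartheta}-\tilde{C}\rho_0^{r},
\]
and the analogous bound with $q\vartheta$ replaced by $q$ when $a_0>0$. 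Since $q\vartheta<r$ (hence also $q<r$, as $\vartheta\geq 1$) by Remark \ref{remark-H2}, and since the constants $\hat{C},\tilde{C}$ do not depend on $u$, the right-hand side is a fixed positive constant $c_0$ once $\rho_0$ is chosen small enough. As $u\in\mathcal{M}$ was arbitrary, this gives $m_0=\inf_{\mathcal{M}}\varphi\geq c_0>0$, which is the claim; the decisive trick is that the scaling removes the dependence on $\|u\|$ that would otherwise spoil the estimate, while the homogeneity of $\|\cdot\|$ and the uniformity of $\hat{C},\tilde{C}$ make the resulting bound independent of $u$.
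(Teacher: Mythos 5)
Your proposal is correct and follows essentially the same route as the paper: both rest on the maximality property $\varphi(t_1u^+-t_2u^-)\leq\varphi(u)$ for $u\in\mathcal{M}$ from Proposition \ref{proposition_nodal_unique-pair} combined with the small-norm lower bound of Proposition \ref{proposition_auxiliary_result}, the paper choosing $\alpha_0,\beta_0$ with $\|\alpha_0u^+-\beta_0u^-\|=\eta_0$ where you take the concrete equal scaling $t_1=t_2=\rho_0/\|u\|$. Your explicit remark that $\mathcal{M}\neq\emptyset$ (via Step I of Proposition \ref{proposition_nodal_unique-pair}) is a small but welcome addition that the paper leaves implicit.
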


\begin{proof}
	By Proposition \ref{proposition_auxiliary_result} and the fact that $q \leq q\vartheta < r$, we have
	\begin{align}
		\label{new-5465415}
		\varphi(u) \geq \hat{\xi}>0 \quad\text{for all }u \in  W^{1, \mathcal{H} }_0 ( \Omega ) \text{ with }\|u\|=\eta_0
	\end{align}
	for some $\eta_0 \in (0,1)$ small enough.

	Now let $u \in \mathcal{M}$ and take $\alpha_0, \beta_0>0$ such that $\|\alpha_0 u^+-\beta_0u^-\|=\eta_0$. Then, from \eqref{new-5465415} and Proposition \ref{proposition_nodal_unique-pair} we get
	\begin{align*}
		0<\hat{\xi} \leq \varphi(\alpha_0 u^+-\beta_0u^-) \leq \varphi(u).
	\end{align*}
	Since $u \in \mathcal{M}$ was arbitrary chosen, it follows that $m_0>0$.
\end{proof}

\begin{proposition}
	\label{proposition_nodal_positive-infimum}
	Let hypotheses \eqref{H1}--\eqref{H3} be satisfied. Then $\varphi_{|\mathcal{M}}$ is sequentially coercive, i.e.\,for any sequence $\{u_n\}_{n\in\N}\subseteq \mathcal{M}$ such that $\|u_n\|\to+\infty$ it holds that $\varphi(u_n) \to \infty$.
\end{proposition}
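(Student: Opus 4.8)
The plan is to argue by contradiction. Suppose the assertion fails: then there is a sequence $\{u_n\}_{n\in\N}\subseteq\mathcal{M}$ with $\|u_n\|\to+\infty$ and, after passing to a subsequence, $\varphi(u_n)\le M$ for some $M>0$. I normalize by setting $y_n=u_n/\|u_n\|$, so that $\|y_n\|=1$ and hence $\rho_{\mathcal{H}}(\nabla y_n)=1$ by Proposition \ref{proposition_modular_properties}(i). Along a further subsequence I may assume $y_n\rightharpoonup y$ in $W^{1,\mathcal{H}}_0(\Omega)$, $y_n\to y$ in $L^r(\Omega)$ and $y_n\to y$ a.e.\ in $\Omega$, using the compact embedding in Proposition \ref{proposition_embeddings}(ii). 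The decisive dichotomy is whether $y\neq 0$ or $y\equiv 0$. First I record a consequence of the constraint: adding the two defining equations of $\mathcal{M}$ gives $\langle\varphi'(u_n),u_n\rangle=0$, that is,
\begin{align*}
	\l(a_0+b_0 [\Phi_{\mathcal{H}}(\nabla u_n)]^{\vartheta-1}\r)\rho_{\mathcal{H}}(\nabla u_n)=\int_\Omega f(x,u_n)u_n\,\mathrm{d} x.
\end{align*}
Since $\|u_n\|>1$ eventually, Proposition \ref{proposition_modular_properties}(iv) yields $\rho_{\mathcal{H}}(\nabla u_n)\le\|u_n\|^q$, while $\Phi_{\mathcal{H}}(\nabla u_n)\le\frac1p\rho_{\mathcal{H}}(\nabla u_n)$ holds trivially because $\frac1q\le\frac1p$; hence the right-hand side is bounded above by $a_0\|u_n\|^q+\frac{b_0}{p^{\vartheta-1}}\|u_n\|^{q\vartheta}$. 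Combined with hypothesis \eqref{H3}\eqref{H3iv}, which gives $q\vartheta F(x,s)\le f(x,s)s$ for all $s\in\R$, this provides a uniform upper bound for $\int_\Omega F(x,u_n)\,\mathrm{d} x/\|u_n\|^{q\vartheta}$.

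If $y\neq 0$, I set $\Omega_0=\{x\in\Omega\colon y(x)\neq 0\}$, so that $|\Omega_0|_N>0$, and note that $|u_n(x)|=\|u_n\|\,|y_n(x)|\to+\infty$ for a.a.\ $x\in\Omega_0$. Writing $F(x,u_n)/\|u_n\|^{q\vartheta}=\big(F(x,u_n)/|u_n|^{q\vartheta}\big)|y_n|^{q\vartheta}$ on $\Omega_0$ and invoking the superlinearity \eqref{superlinear-F}, the elementary lower bound $F(x,s)\ge -M_4$ (a consequence of \eqref{H3}\eqref{H3i} and \eqref{H3}\eqref{H3ii}) and Fatou's lemma, I obtain $\int_\Omega F(x,u_n)\,\mathrm{d} x/\|u_n\|^{q\vartheta}\to+\infty$. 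This contradicts the uniform upper bound established above.

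If $y\equiv 0$, the superlinearity can no longer be activated, and here I would exploit the fibering property of Proposition \ref{proposition_nodal_unique-pair}: since $u_n\in\mathcal{M}$, for every fixed $R\ge 1$ and all $n$ with $\|u_n\|>R$, the choice $t_1=t_2=R/\|u_n\|$ gives
\begin{align*}
	\varphi(Ry_n)=\varphi\l(\tfrac{R}{\|u_n\|}u_n^+-\tfrac{R}{\|u_n\|}u_n^-\r)\le\varphi(u_n)\le M.
\end{align*}
On the other hand, because $\rho_{\mathcal{H}}(\nabla y_n)=1$ and $R\ge 1$, one estimates $\Phi_{\mathcal{H}}(\nabla(Ry_n))=\frac{R^p}{p}\|\nabla y_n\|_p^p+\frac{R^q}{q}\|\nabla y_n\|_{q,\mu}^q\ge\frac{R^p}{q}$, whence $\varphi(Ry_n)\ge\frac{b_0}{\vartheta}\big(\frac{R^p}{q}\big)^\vartheta-\int_\Omega F(x,Ry_n)\,\mathrm{d} x$. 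Since $y_n\to 0$ in $L^r(\Omega)$ (and in $L^1(\Omega)$), the growth condition \eqref{H3}\eqref{H3i} forces $\int_\Omega F(x,Ry_n)\,\mathrm{d} x\to 0$ as $n\to\infty$ for each fixed $R$, so passing to the limit yields $\frac{b_0}{\vartheta q^\vartheta}R^{p\vartheta}\le M$; letting $R\to+\infty$ gives a contradiction.

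The main obstacle is precisely the vanishing case $y\equiv 0$: there the superlinear growth cannot be engaged through Fatou's lemma, so coercivity cannot be read off from the constraint together with \eqref{H3}\eqref{H3iv} alone. The remedy is to descend along the fiber $t\mapsto\varphi(tu_n)$ using the maximality of $u_n\in\mathcal{M}$ from Proposition \ref{proposition_nodal_unique-pair}, and then to bound the purely kinetic energy from below by the modular via Proposition \ref{proposition_modular_properties}; the homogeneity gap $p<q$ is exactly what keeps $\Phi_{\mathcal{H}}(\nabla(Ry_n))\ge R^p/q$ alive under the normalization $\rho_{\mathcal{H}}(\nabla y_n)=1$, producing the growth in $R$ that defeats the assumed bound $M$.
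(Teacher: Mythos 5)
Your proof is correct and follows essentially the same strategy as the paper: normalize $y_n=u_n/\|u_n\|$, split according to whether the weak limit $y$ vanishes, rule out $y\neq 0$ via Fatou's lemma and the superlinearity \eqref{superlinear-F}, and in the vanishing case descend along the fiber map using the maximality property of Proposition \ref{proposition_nodal_unique-pair} together with $\rho_{\mathcal{H}}(\nabla y_n)=1$. The only (harmless) deviation is in the nonvanishing case, where you contradict the upper bound on $\int_\Omega F(x,u_n)\,\mathrm{d} x/\|u_n\|^{q\vartheta}$ obtained from $\langle \varphi'(u_n),u_n\rangle=0$ and \eqref{H3}\eqref{H3iv}, whereas the paper contradicts the lower bound $\varphi(u_n)\geq m_0>0$ supplied by Proposition \ref{proposition_positive_infimum_1}.
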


\begin{proof}
	Let $\{u_n\}_{n\in\N}\subseteq \mathcal{M}$ be a sequence such that $\|u_n\|\to+\infty$. Let $y_n=\frac{u_n}{\|u_n\|}$, then
	\begin{equation}
		\label{G16}
		\begin{aligned}
			 & y_n \rightharpoonup y \quad\text{in } W^{1, \mathcal{H} }_0 ( \Omega )
			\quad\text{and}\quad
			y_n\to y \quad\text{in }L^{r}(\Omega) \text{ and a.e.\,in }\Omega, \\
			 & y_n^{\pm} \rightharpoonup  y^{\pm} \quad\text{in } W^{1, \mathcal{H} }_0 ( \Omega )
			\quad\text{and}\quad
			y_n^{\pm}\to y^{\pm} \quad\text{in }L^{r}(\Omega)\text{ and a.e.\,in }\Omega
		\end{aligned}
	\end{equation}
	for some $y=y^+-y^-\in  W^{1, \mathcal{H} }_0 ( \Omega )$. Assume first that $y\neq 0$. From Proposition \ref{proposition_modular_properties}(iv) we have for $\|u_n\| > 1$
	\begin{equation}
		\label{n14}
		\begin{aligned}
			\varphi(u_n)
			&=a_0\l(\frac{1}{p}\l\|\nabla u_n\r\|_p^p+\frac{1}{q}\l\|\nabla u_n\r\|_{q,\mu}^q\r)\\
			&\quad +\frac{b_0}{\vartheta}\l(\frac{1}{p}\l\|\nabla u_n\r\|_p^p+\frac{1}{q}\l\|\nabla u_n\r\|_{q,\mu}^q\r)^\vartheta\\
			& \quad -\int_\Omega  F\l(x,u_n\r)\,\mathrm{d} x\\
			&\leq \frac{a_0}{p}\|u_n\|^q+\frac{b_0}{\vartheta p^\vartheta}\|u_n\|^{q\vartheta} -\int_\Omega  F(x,u_n)\,\mathrm{d} x.
		\end{aligned}
	\end{equation}
	Then, dividing \eqref{n14} by $\|u_n\|^{q\vartheta}$, passing to the limit as $n\to \infty$ and applying \eqref{superlinear-F} (see \eqref{c10}, \eqref{c11} and \eqref{c15}) it follows that $\frac{\varphi(u_n)}{\|u_n\|^{q\vartheta}}\to -\infty$, a contradiction to $\varphi(u_n)\geq m_0>0$ for all $n \in \N$, see Proposition \ref{proposition_positive_infimum_1}. Hence, we know that $y=0$, which implies $y^+=y^-=0$. Since $u_n \in \mathcal{M}$, by using Proposition \ref{proposition_nodal_unique-pair}, \eqref{G16} and Proposition \ref{proposition_modular_properties}(ii), we have for each pair $(t_1,t_2)\in (0,\infty)\times(0,\infty)$ with $0<t_1\leq t_2$ that
	\begin{align*}
		\varphi(u_n)
		 & \geq \varphi(t_1 y_n^+-t_2y_n^-)  \\
		 & =a_0\l(\frac{1}{p}\l\|\nabla \l(t_1y_n^+-t_2y_n^-\r)\r\|_p^p+\frac{1}{q}\l\|\nabla \l(t_1y_n^+-t_2y_n^-\r)\r\|_{q,\mu}^q\r) \\
		 & \quad +\frac{b_0}{\vartheta}\l(\frac{1}{p}\l\|\nabla \l(t_1y_n^+-t_2y_n^-\r)\r\|_p^p+\frac{1}{q}\l\|\nabla \l(t_1y_n^+-t_2y_n^-\r)\r\|_{q,\mu}^q\r)^\vartheta \\
		 & \quad -\int_\Omega  F\l(x,t_1y_n^+-t_2y_n^-\r)\,\mathrm{d} x\\
		 & \geq \frac{b_0}{q^\vartheta \vartheta}\min \l\{t_1^{p\vartheta},t_1^{q\vartheta}\r\} \left[ \rho_{\mathcal{H}}\l(\nabla y_n\r) \right] ^\vartheta
		-\int_\Omega  F\l(x,t_1y_n^+\r)\,\mathrm{d} x  \\
		 & \quad-\int_\Omega  F\l(x,-t_2y_n^-\r)\,\mathrm{d} x\to \frac{b_0}{q^\vartheta \vartheta}\min \l\{t_1^{p\vartheta},t_1^{q\vartheta}\r\},
	\end{align*}
	since $\rho_{\mathcal{H}}\l(\nabla y_n\r)=1$. Hence, for any given $K>0$ we take $t_1>0$ large enough and then for $n \geq n_0=n_0(t_1)$ we have that $\varphi(u_n)>K$.
\end{proof}

\begin{proposition}
	\label{proposition_bound_elements_nehari}
	Let hypotheses \eqref{H1}--\eqref{H3} be satisfied. Then there exists $M>0$ such that $\|u^{\pm}\|\geq M>0$ for all $u\in\mathcal{M}$.
\end{proposition}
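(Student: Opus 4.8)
The plan is to work directly with the defining identities of $\mathcal{M}$ and to extract a positive lower bound on the modular $\rho_{\mathcal{H}}(\nabla u^{\pm})$ that is uniform over $u\in\mathcal{M}$; the bound on $\|u^{\pm}\|$ then follows from Proposition \ref{proposition_modular_properties}. I only treat $u^+$, the argument for $u^-$ being completely symmetric after replacing the identity $\langle\varphi'(u),u^+\rangle=0$ by $\langle\varphi'(u),-u^-\rangle=0$.

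For $u\in\mathcal{M}$ the relation $\langle\varphi'(u),u^+\rangle=0$ can be written, using $f(\cdot,0)=0$ and $\langle A(u),u^+\rangle=\|\nabla u^+\|_p^p+\|\nabla u^+\|_{q,\mu}^q=\rho_{\mathcal{H}}(\nabla u^+)$, as
\begin{align*}
	\psi(\Phi_{\mathcal{H}}(\nabla u))\,\rho_{\mathcal{H}}(\nabla u^+)=\int_\Omega f(x,u^+)u^+\,\mathrm{d} x.
\end{align*}
The left-hand side I estimate from below exactly as in the proof of Proposition \ref{proposition_cerami}: if $a_0>0$ simply by $a_0\rho_{\mathcal{H}}(\nabla u^+)$, and if $a_0=0$ by $\frac{b_0}{q^{\vartheta-1}}[\rho_{\mathcal{H}}(\nabla u^+)]^{\vartheta}$, using $\Phi_{\mathcal{H}}(\nabla u)\ge\frac1q\rho_{\mathcal{H}}(\nabla u^+)$.

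For the right-hand side, \eqref{H3}\eqref{H3i} together with \eqref{H3}\eqref{H3iii} give, for every $\varepsilon>0$, a constant $c_\varepsilon>0$ with $f(x,s)s\le\varepsilon|s|^{p}+c_\varepsilon|s|^{r}$ when $a_0>0$ and $f(x,s)s\le\varepsilon|s|^{p\vartheta}+c_\varepsilon|s|^{r}$ when $a_0=0$. The step I expect to be the main obstacle is to turn the resulting Lebesgue norms into the correct powers of $\rho_{\mathcal{H}}(\nabla u^+)$: one must not pass through the Luxemburg norm, since $\|\nabla u^+\|_p$ and $\|\nabla u^+\|_{\mathcal{H}}$ are not comparable when $\mu$ is large, but rather use the plain Sobolev inequality. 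Since $\|\nabla u^+\|_p^p\le\rho_{\mathcal{H}}(\nabla u^+)$ and $W^{1,p}_0(\Omega)\hookrightarrow L^{p^*}(\Omega)\hookrightarrow L^{s}(\Omega)$ for every $s\in[1,p^*)$, I obtain $\|u^+\|_s\le C_s[\rho_{\mathcal{H}}(\nabla u^+)]^{1/p}$ for each of $s\in\{p,p\vartheta,r\}$, all of which satisfy $s<p^*$ by \eqref{H1}, \eqref{H2} and Remark \ref{remark-H2}. Raising to the appropriate power, the $\varepsilon$-term on the right is bounded by $\varepsilon C[\rho_{\mathcal{H}}(\nabla u^+)]^{1}$ (case $a_0>0$) or $\varepsilon C[\rho_{\mathcal{H}}(\nabla u^+)]^{\vartheta}$ (case $a_0=0$) -- precisely matching the order of the lower bound -- while the $c_\varepsilon$-term is bounded by $c_\varepsilon C[\rho_{\mathcal{H}}(\nabla u^+)]^{r/p}$.

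Writing $\rho^+:=\rho_{\mathcal{H}}(\nabla u^+)>0$, the two estimates combine into $c\,(\rho^+)^{\#}\le\varepsilon C(\rho^+)^{\#}+c_\varepsilon C(\rho^+)^{r/p}$ with $\#=1$ if $a_0>0$ and $\#=\vartheta$ if $a_0=0$. Choosing $\varepsilon$ small enough to absorb the first term on the right into the left and dividing by $(\rho^+)^{\#}$ yields $c'\le c_\varepsilon C(\rho^+)^{r/p-\#}$. Since $p\#\le p\vartheta<q\vartheta<r$ (using $\vartheta\ge1$ and Remark \ref{remark-H2}), the exponent $r/p-\#$ is strictly positive, so $\rho^+$ is bounded below by a constant $\rho_0>0$ that does not depend on $u$. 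Finally, Proposition \ref{proposition_modular_properties}(iii) gives $\|u^+\|=\|\nabla u^+\|_{\mathcal{H}}\ge\min\{1,\rho_0^{1/p}\}=:M>0$, and the symmetric computation starting from $\langle\varphi'(u),-u^-\rangle=0$ provides the same bound for $\|u^-\|$.
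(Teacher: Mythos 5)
Your proof is correct and follows essentially the same route as the paper's: test the identity $\langle\varphi'(u),u^+\rangle=0$, bound the Kirchhoff factor from below by $a_0$ (resp.\ by $\tfrac{b_0}{q^{\vartheta-1}}[\rho_{\mathcal{H}}(\nabla u^+)]^{\vartheta-1}$ when $a_0=0$), split $f$ via \eqref{H3}\eqref{H3i} and \eqref{H3}\eqref{H3iii} with the exponent $p$ or $p\vartheta$ according to the case, absorb the $\varepsilon$-term, and conclude from $r>p\vartheta\,$ (resp.\ $r>p$). The only cosmetic difference is that you estimate the $\|u^+\|_r^r$ term through $W^{1,p}_0(\Omega)\hookrightarrow L^r(\Omega)$ so that everything becomes a power of $\rho_{\mathcal{H}}(\nabla u^+)$, whereas the paper uses $W^{1,\mathcal{H}}_0(\Omega)\hookrightarrow L^r(\Omega)$ and finishes from $M\|u^+\|^{q\vartheta}\leq\|u^+\|^{r}$ via Proposition \ref{proposition_modular_properties}; both are legitimate.
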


\begin{proof}
	Let $u\in \mathcal{M}$ with $\|u^{\pm}\| <1$. We only show the statement for $u^+$, the other case works similarly.  By definition we have for $u\in \mathcal{M}$
	\begin{equation}
		\label{prop_bound_1}
		\begin{aligned}
			 & \left(a_0+b_0\left(\frac{1}{p}\left\|\nabla \left(u^+-u^-\right)\right\|_p^p+\frac{1}{q}\left\|\nabla \left(u^+-u^-\right)\right\|_{q,\mu}^q\right)^{\vartheta-1}\right) \\
			 & \quad\times\left(\left\|\nabla u^+ \right\|_p^p+\left\|\nabla u^+ \right\|_{q,\mu}^q\right) \\
			 & = \int_\Omega f(x,u^+)u^+ \,\mathrm{d} x.
		\end{aligned}
	\end{equation}
	First we do the case of $a_0 > 0$. From \eqref{H3}\eqref{H3i} and \eqref{H3}\eqref{H3iii}, for a given $\varepsilon>0$, we can find $C_\varepsilon>0$ such that
	\begin{align}
		\label{prop_bound_2}
		|f(x,s)| \leq \varepsilon |s|^{p-1} +C_\varepsilon|s|^{r-1}
	\end{align}
	for a.a.\,$x\in \Omega$ and for all $s\in \R$. Using \eqref{prop_bound_2} in \eqref{prop_bound_1} along with $W^{1,p}_0(\Omega)\hookrightarrow L^p(\Omega)$ and $W^{1,\mathcal{H}}_0(\Omega)\hookrightarrow L^r(\Omega)$ we obtain
	\begin{align*}
		a_0\left(\left\|\nabla u^+ \right\|_p^p+\left\|\nabla u^+ \right\|_{q,\mu}^q\right) \leq \varepsilon \left\|u^+\right\|_p^p+C_\varepsilon \left\|u^+\right\|_r^r
		\leq \varepsilon C_\Omega^p \left\|\nabla u^+\right\|_p^p+C_\varepsilon C_r^r\left\|u^+\right\|^r.
	\end{align*}
	Choosing $\varepsilon \in \left( 0,\frac{a_0}{C_{\Omega}^{p}}\right) $ and applying Proposition \ref{proposition_modular_properties}(iii) yields
	\begin{align*}
		M \left\|u^+\right\|^q
		\leq M \rho_{\mathcal{H}}(\nabla u^+)
		\leq \left\|u^+\right\|^r
	\end{align*}
	for some $M>0$. Since $q<r$, see Remark \ref{remark-H2}, the assertion of the proposition follows. Now we do the case $a_0=0$. From \eqref{H3}\eqref{H3i} and \eqref{H3}\eqref{H3iii}, for a given $\varepsilon>0$, we can find $C_\varepsilon>0$ such that
	\begin{align}
		\label{prop_bound_3}
		|f(x,s)| \leq \varepsilon |s|^{p\vartheta-1} +C_\varepsilon|s|^{r-1}
	\end{align}
	for a.a.\,$x\in \Omega$ and for all $s\in \R$. Using \eqref{prop_bound_3} in \eqref{prop_bound_1} along with $W^{1,p}_0(\Omega)\hookrightarrow L^{p\vartheta}(\Omega)$ and $W^{1,\mathcal{H}}_0(\Omega)\hookrightarrow L^r(\Omega)$ we obtain
	\begin{align*}
		\frac{b_0}{q^{\vartheta-1}}\left(\left\|\nabla u^+ \right\|_p^{p\vartheta}+\left\|\nabla u^+ \right\|_{q,\mu}^{q\vartheta}\right)
		&\leq \varepsilon \left\|u^+\right\|_{p\vartheta}^{p\vartheta}+C_\varepsilon \left\|u^+\right\|_r^r\\
		&\leq \varepsilon C_\Omega^{p\vartheta} \left\|\nabla u^+\right\|_p^{p\vartheta}+C_\varepsilon C_r^r\left\|u^+\right\|^r.
	\end{align*}
	Choosing $\varepsilon \in \left( 0,\frac{b_0}{q^{\vartheta-1}C_{\Omega}^{p\vartheta}}\right)$ and applying the inequality $2^{1-\vartheta}(s+t)^\vartheta \leq s^\vartheta + t^\vartheta$ for all $s,t \geq 0$ and Proposition \ref{proposition_modular_properties}(iii) yields
	\begin{align*}
		M \left\|u^+\right\|^{q\vartheta}
		\leq M \left[ \rho_{\mathcal{H}}(\nabla u^+) \right] ^\vartheta
		\leq \left\|u^+\right\|^r
	\end{align*}
	for some $M>0$. Since $q\vartheta<r$, see Remark \ref{remark-H2}, the assertion of the proposition follows.
\end{proof}

\begin{proposition}
	\label{proposition_infimum_achieved}
	Let hypotheses \eqref{H1}--\eqref{H3} be satisfied. Then there exists $y_0\in\mathcal{M}$ such that $\varphi(y_0)=m_0$.
\end{proposition}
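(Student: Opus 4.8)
The plan is to apply the direct method on the constraint set $\mathcal{M}$. First I would pick a minimizing sequence $\{u_n\}_{n\in\N}\subseteq\mathcal{M}$ with $\varphi(u_n)\to m_0$, which is finite and strictly positive by Proposition \ref{proposition_positive_infimum_1}. By the sequential coercivity of $\varphi_{|\mathcal{M}}$ from Proposition \ref{proposition_nodal_positive-infimum}, the sequence $\{u_n\}$ is bounded in $W^{1,\mathcal{H}}_0(\Omega)$, since otherwise $\varphi(u_n)\to+\infty$, contradicting $\varphi(u_n)\to m_0$. Passing to a subsequence and using reflexivity together with the compact embedding in Proposition \ref{proposition_embeddings}(ii), I obtain $y_0\in W^{1,\mathcal{H}}_0(\Omega)$ such that $u_n\rightharpoonup y_0$ in $W^{1,\mathcal{H}}_0(\Omega)$, $u_n\to y_0$ in $L^r(\Omega)$ and a.e.\ in $\Omega$, and also $u_n^{\pm}\rightharpoonup y_0^{\pm}$ together with $u_n^{\pm}\to y_0^{\pm}$ in $L^r(\Omega)$ and a.e., exactly as in the proof of Proposition \ref{proposition_nodal_positive-infimum}.

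The crucial step, which I expect to be the main obstacle, is to prove $y_0^{\pm}\neq 0$, so that Proposition \ref{proposition_nodal_unique-pair} can be applied to $y_0$. I would argue by contradiction, say for $y_0^+$. Since $u_n\in\mathcal{M}$, testing yields
\[
\psi(\Phi_{\mathcal{H}}(\nabla u_n))\,\rho_{\mathcal{H}}(\nabla u_n^+)=\int_\Omega f(x,u_n^+)u_n^+\,\mathrm{d}x .
\]
By Proposition \ref{proposition_bound_elements_nehari} we have $\|u_n^+\|\geq M>0$, so Proposition \ref{proposition_modular_properties}(iii),(iv) gives $\rho_{\mathcal{H}}(\nabla u_n^+)\geq c_1>0$ uniformly in $n$. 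If $a_0>0$, then $\psi\geq a_0$ and the left-hand side is bounded below by $a_0 c_1>0$; if $a_0=0$, then using $\rho_{\mathcal{H}}(\nabla u_n)=\rho_{\mathcal{H}}(\nabla u_n^+)+\rho_{\mathcal{H}}(\nabla u_n^-)\geq c_1$ together with $p\leq q$ one gets $\Phi_{\mathcal{H}}(\nabla u_n)\geq c_1/q$, whence $\psi(\Phi_{\mathcal{H}}(\nabla u_n))=b_0[\Phi_{\mathcal{H}}(\nabla u_n)]^{\vartheta-1}$ is bounded below by a positive constant, so the left-hand side again stays away from zero. On the other hand, if $y_0^+=0$ then $u_n^+\to 0$ in $L^r(\Omega)$, and the subcritical growth \eqref{H3}\eqref{H3i} forces $\int_\Omega f(x,u_n^+)u_n^+\,\mathrm{d}x\to 0$, a contradiction. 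Hence $y_0^+\neq 0$, and by symmetry $y_0^-\neq 0$.

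Once $y_0^{\pm}\neq 0$ is secured, Proposition \ref{proposition_nodal_unique-pair} provides a unique pair $(\alpha_{y_0},\beta_{y_0})\in(0,\infty)\times(0,\infty)$ with $w:=\alpha_{y_0}y_0^+-\beta_{y_0}y_0^-\in\mathcal{M}$, so that $\varphi(w)\geq m_0$. For the reverse inequality I would combine weak lower semicontinuity with the maximality statement of Proposition \ref{proposition_nodal_unique-pair}: since $u_n\in\mathcal{M}$, the latter gives $\varphi(\alpha_{y_0}u_n^+-\beta_{y_0}u_n^-)\leq\varphi(u_n)$ for every $n$. The map $u\mapsto\Psi[\Phi_{\mathcal{H}}(\nabla u)]$ is weakly lower semicontinuous (as $\Phi_{\mathcal{H}}$ is convex and continuous and $\Psi$ is continuous and increasing), while $u\mapsto\int_\Omega F(x,u)\,\mathrm{d}x$ is weakly continuous by the compact embedding and the growth of $f$; moreover $\alpha_{y_0}u_n^+-\beta_{y_0}u_n^-\rightharpoonup w$. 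Therefore
\[
\varphi(w)\leq\liminf_{n\to\infty}\varphi\!\left(\alpha_{y_0}u_n^+-\beta_{y_0}u_n^-\right)\leq\liminf_{n\to\infty}\varphi(u_n)=m_0 .
\]
Combining the two inequalities yields $\varphi(w)=m_0$ with $w\in\mathcal{M}$, so $y_0:=w$ is the sought minimizer. If in addition one wants the weak limit itself to belong to $\mathcal{M}$, one checks $\langle\varphi'(y_0),y_0^+\rangle\leq 0$ and $\langle\varphi'(y_0),-y_0^-\rangle\leq 0$ by a weak lower semicontinuity argument on the constraint functions (using $\liminf(a_nb_n)\geq(\liminf a_n)(\liminf b_n)$ for nonnegative sequences) and then invokes Proposition \ref{proposition_nodal_unique-pair-less-one} to force $\alpha_{y_0}=\beta_{y_0}=1$.
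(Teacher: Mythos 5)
Your proof is correct, and although the opening (minimizing sequence, boundedness via the sequential coercivity of Proposition \ref{proposition_nodal_positive-infimum}, passage to weak limits, and the nonvanishing of $y_0^{\pm}$ via Proposition \ref{proposition_bound_elements_nehari} and the growth of $f$) coincides with the paper's, your closing argument is genuinely different. The paper obtains the reverse inequality $\varphi(\alpha_{y_0}y_0^+ - \beta_{y_0}y_0^-) \leq m_0$ by rewriting $\varphi(w)$ as $\varphi(w) - \frac{1}{q\vartheta}\langle \varphi'(w), w\rangle$ for $w = \alpha_{y_0}y_0^+ - \beta_{y_0}y_0^- \in \mathcal{M}$, first proving $\langle \varphi'(y_0), \pm y_0^{\pm}\rangle \leq 0$ by weak lower semicontinuity so that Proposition \ref{proposition_nodal_unique-pair-less-one} forces $\alpha_{y_0}, \beta_{y_0} \in (0,1]$, and then exploiting hypothesis \eqref{H3}\eqref{H3iv} together with a term-by-term sign analysis to compare with $\liminf_{n\to\infty}\bigl(\varphi(y_n) - \frac{1}{q\vartheta}\langle \varphi'(y_n), y_n\rangle\bigr) = m_0$. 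You instead invoke the maximality statement of Proposition \ref{proposition_nodal_unique-pair} along the rays through the elements $u_n \in \mathcal{M}$, namely $\varphi(\alpha_{y_0}u_n^+ - \beta_{y_0}u_n^-) \leq \varphi(u_n)$, and combine it with the sequential weak lower semicontinuity of $\varphi$ itself (convexity and continuity of $\Phi_{\mathcal{H}}(\nabla\,\cdot\,)$, continuity and monotonicity of $\Psi$, weak continuity of the $F$-term by the compact embedding) applied to $\alpha_{y_0}u_n^+ - \beta_{y_0}u_n^- \rightharpoonup w$. This is shorter, bypasses Proposition \ref{proposition_nodal_unique-pair-less-one} and hypothesis \eqref{H3}\eqref{H3iv} entirely at this stage, and still produces a minimizer $w \in \mathcal{M}$; the only information you forgo is that the minimizer is the weak limit $y_0^+ - y_0^-$ itself (i.e.\ $\alpha_{y_0} = \beta_{y_0} = 1$), which the statement does not require and which your concluding remark recovers anyway along the paper's lines.
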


\begin{proof}
	Let $\{y_n\}_{n\in\N}\subseteq \mathcal{M}$ be a minimizing sequence, that is,
	\begin{align*}
		\varphi(y_n) \searrow m_0.
	\end{align*}
	From Proposition \ref{proposition_nodal_positive-infimum} we know that $\{y_n\}_{n\in\N}$ is bounded in $W^{1,\mathcal{H}}_0(\Omega)$, so in particular $\{y_n^+\}_{n\in\N}, \{y_n^-\}_{n\in\N}$ are bounded in $W^{1,\mathcal{H}}_0(\Omega)$ (see Proposition \ref{proposition_modular_properties} \textnormal{(vi)}). Therefore, we can suppose, for subsequences if necessary, not relabeled, that
	\begin{equation}
		\label{prop_inf_achieved_1}
		\begin{aligned}
			y_n^+ \rightharpoonup y_0^+ \quad\text{in }W^{1,\mathcal{H}}_0(\Omega),  \quad y_0^+ \geq 0, \\
			y_n^- \rightharpoonup y_0^- \quad\text{in }W^{1,\mathcal{H}}_0(\Omega),  \quad y_0^- \geq 0, \\
			y_n^+\to y_0^+ \quad\text{in }L^r(\Omega) \text{ and a.e.\,in } \Omega, \\
			y_n^-\to y_0^- \quad\text{in }L^r(\Omega) \text{ and a.e.\,in } \Omega, \\
		\end{aligned}
	\end{equation}
	First, due to \eqref{prop_inf_achieved_1} and \eqref{H3}\eqref{H3i}, we have
	\begin{equation}
		\label{prop_inf_achieved_2}
		\begin{aligned}
			\int_\Omega f(x,\alpha y_n^+)\alpha y_n^+\,\mathrm{d} x
			 & \to \int_\Omega f(x,\alpha y_0^+)\alpha y_0^+\,\mathrm{d} x\quad\text{as }n\to\infty, \\
			\int_\Omega f(x,-\beta y_n^-)(-\beta y_n^-)\,\mathrm{d} x
			 & \to \int_\Omega f(x,-\beta y_0^-)(-\beta y_0^-)\,\mathrm{d} x\quad\text{as }n\to\infty
		\end{aligned}
	\end{equation}
	and also
	\begin{equation}
		\label{prop_inf_achieved_3}
		\begin{aligned}
			\int_\Omega F(x,\alpha y_n^+)\,\mathrm{d} x
			 & \to \int_\Omega F(x,\alpha y_0^+)\,\mathrm{d} x\quad\text{as }n\to\infty,  \\
			\int_\Omega F(x,-\beta y_n^-)\,\mathrm{d} x
			 & \to \int_\Omega F(x,-\beta y_0^-)\,\mathrm{d} x\quad\text{as }n\to\infty
		\end{aligned}
	\end{equation}
	for all $\alpha, \beta >0$.

	{\bf Claim:} $y_0^+\neq 0 \neq y_0^-$

	We argue indirectly and suppose that $y_0^+=0$. Since $y_n\in \mathcal{M}$, we get
	\begin{align*}
		0
		& =\l\langle \varphi'\l(y_n\r), y_n^+\r\rangle \\
		& =\l(a_0+b_0\l(\frac{1}{p}\l\|\nabla y_n \r\|_p^p+\frac{1}{q}\l\|\nabla y_n \r\|_{q,\mu}^q\r)^{\vartheta-1}\r) \\
		& \quad \times \l(\l\|\nabla y_n^+\r\|_p^p+\l\|\nabla y_n^+ \r\|_{q,\mu}^q\r) -\int_\Omega  f\l(x,y_n^+\r)y_n^+\,\mathrm{d} x  \\
		& \geq \frac{b_0}{q^{\vartheta-1}}\l(\l\|\nabla y_n^+\r\|_p^p+\l\|\nabla y_n^+ \r\|_{q,\mu}^q\r)^\vartheta-\int_\Omega  f\l(x, y_n^+\r) y_n^+\,\mathrm{d} x.
	\end{align*}
	Due to \eqref{prop_inf_achieved_2} we obtain from the inequality above
	\begin{align*}
		\frac{b_0}{q^{\vartheta-1}} \left[ \rho_\mathcal{H}(\nabla y_n^+) \right] ^\vartheta
		\leq \int_\Omega  f\l(x, y_n^+\r) y_n^+\,\mathrm{d} x \to 0 \quad\text{as }n\to \infty.
	\end{align*}
	Therefore, $\rho_\mathcal{H}(\nabla y_n^+)\to 0$ as $n\to\infty$ and so, by Proposition \ref{proposition_modular_properties}(v), it follows that $y_n^+\to 0$ in $W^{1,\mathcal{H}}(\Omega)$. But from Proposition \ref{proposition_bound_elements_nehari} we know that $\|y_n^+\|\geq M>0$, a contradiction. Similar arguments show that $y_0^-\neq 0$. This proves the Claim.

	From the Claim, by using Proposition \ref{proposition_nodal_unique-pair}, since $y_0^+,y_0^-\neq 0$, we can find unique $\alpha_{y_0}, \beta_{y_0}>0$ such that $\alpha_{y_0}y_0^+ -\beta_{y_0}y_0^-\in \mathcal{M}$. Furthermore, from \eqref{prop_inf_achieved_2} and the weak lower semicontinuity of $\|\cdot\|_p$ and $\|\cdot\|_{q,\mu}$, it follows
	\begin{align*}
		& \left\langle \varphi'(y_0),\pm y_0^{\pm} \right\rangle  \\
		& = \l(a_0+b_0\l(\frac{1}{p}\l\|\nabla y_0 \r\|_p^p+\frac{1}{q}\l\|\nabla y_0 \r\|_{q,\mu}^q\r)^{\vartheta-1}\r) \l(\l\|\nabla\left(\pm y_0^{\pm}\right) \r\|_p^p+\l\|\nabla \left(\pm y_0^{\pm}\right) \r\|_{q,\mu}^q\r) \\
		& \quad -\int_\Omega  f\l(x, \pm y_0^{\pm} \r) \left(\pm y_0^{\pm}\right)\,\mathrm{d} x  \\
		& \leq \liminf_{n\to \infty}\l(a_0+b_0\l(\frac{1}{p}\l\|\nabla y_n \r\|_p^p+\frac{1}{q}\l\|\nabla y_n \r\|_{q,\mu}^q\r)^{\vartheta-1}\r)  \\
		& \qquad\qquad\quad\times\l(\l\|\nabla\left(\pm y_n^{\pm}\right) \r\|_p^p+\l\|\nabla \left(\pm y_n^{\pm}\right) \r\|_{q,\mu}^q\r) \\
		& \quad -\lim_{n\to \infty}\int_\Omega  f\l(x, \pm y_n^{\pm} \r) \left(\pm y_n^{\pm}\right)\,\mathrm{d} x \\
		& =\liminf_{n\to \infty}\left\langle \varphi'(y_n),\pm y_n^{\pm} \right\rangle=0,
	\end{align*}
	since $y_n \in \mathcal{M}$. Then we can apply Proposition \ref{proposition_nodal_unique-pair-less-one} to conclude that $ \alpha_{y_0}, \beta_{y_0} \in (0,1]$. Using this and \ref{H3}\eqref{H3iv} leads to
	\begin{equation}
		\label{prop_inf_achieved_4}
		\begin{aligned}
			&\frac{1}{q\vartheta}f(x,\alpha_{y_0} y_0^+)\alpha_{y_0} y_0^+-F(x,\alpha_{y_0} y_0^+)\\ & \leq \frac{1}{q\vartheta}f(x,y_0^+) y_0^+-F(x, y_0^+),   \\
			&\frac{1}{q\vartheta}f(x,-\beta_{y_0} y_0^-)(-\beta_{y_0} y_0^-)-F(x,-\beta_{y_0} y_0^-)\\
			& \leq \frac{1}{q\vartheta}f(x,-y_0^-) (-y_0^-)-F(x, -y_0^-)
		\end{aligned}
	\end{equation}
	for a.a.\,$x\in\Omega$. Finally, from $\alpha_{y_0}y_0^+ -\beta_{y_0}y_0^-\in \mathcal{M}$, $ \alpha_{y_0}, \beta_{y_0} \in (0,1]$, \eqref{prop_inf_achieved_2}, \eqref{prop_inf_achieved_3}, \eqref{prop_inf_achieved_4} and $y_n\in\mathcal{M}$ we conclude that
	\begin{align*}
		m_0
		 & \leq \varphi\left(\alpha_{y_0}y_0^+-\beta_{y_0}y_0^-\right)
		-\frac{1}{q\vartheta} \left\langle \varphi'\left(\alpha_{y_0}y_0^+-\beta_{y_0}y_0^-\right),\alpha_{y_0}y_0^+-\beta_{y_0}y_0^- \right\rangle   \\
		 & =a_0\l(\frac{\alpha_{y_0}^p}{p}\l\|\nabla y_0^+\r\|_p^p+\frac{\beta_{y_0}^p}{p}\l\|\nabla y_0^-\r\|_p^p+\frac{\alpha_{y_0}^q}{q}\l\|\nabla y_0^+\r\|_{q,\mu}^q+\frac{\beta_{y_0}^q}{q}\l\|\nabla y_0^-\r\|_{q,\mu}^q\r)  \\
		 & \quad +\frac{b_0}{\vartheta}\l(\frac{\alpha_{y_0}^p}{p}\l\|\nabla y_0^+\r\|_p^p+\frac{\beta_{y_0}^p}{p}\l\|\nabla y_0^-\r\|_p^p+\frac{\alpha_{y_0}^q}{q}\l\|\nabla y_0^+\r\|_{q,\mu}^q+\frac{\beta_{y_0}^q}{q}\l\|\nabla y_0^-\r\|_{q,\mu}^q\r)^{\vartheta} \\
		 & \quad -\int_\Omega  F\l(x,\alpha_{y_0} u^+\r)\,\mathrm{d} x-\int_\Omega  F\l(x,-\beta_{y_0} u^-\r)\,\mathrm{d} x  \\
		 & \quad -a_0\left(\frac{\alpha_{y_0}^p}{q\vartheta}\l\|\nabla y_0^+\r\|_p^p+\frac{\beta_{y_0}^p}{q\vartheta}\l\|\nabla y_0^-\r\|_p^p+\frac{\alpha_{y_0}^q}{q\vartheta}\l\|\nabla y_0^+\r\|_{q,\mu}^q+\frac{\beta_{y_0}^q}{q\vartheta}\l\|\nabla y_0^-\r\|_{q,\mu}^q\right)   \\
		 & \quad -\frac{b_0}{q\vartheta}\l(\frac{\alpha_{y_0}^p}{p}\l\|\nabla y_0^+\r\|_p^p+\frac{\beta_{y_0}^p}{ p}\l\|\nabla y_0^-\r\|_p^p+\frac{\alpha_{y_0}^q}{q}\l\|\nabla y_0^+\r\|_{q,\mu}^q+\frac{\beta_{y_0}^q}{ q}\l\|\nabla y_0^-\r\|_{q,\mu}^q\r)^{\vartheta-1} \\
		 & \qquad\times\left(\alpha_{y_0}^p\l\|\nabla y_0^+\r\|_p^p+\beta_{y_0}^p\l\|\nabla y_0^-\r\|_p^p+\alpha_{y_0}^q\l\|\nabla y_0^+\r\|_{q,\mu}^q+\beta_{y_0}^q\l\|\nabla y_0^-\r\|_{q,\mu}^q\right)   \\
		 & \quad+\frac{1}{q\vartheta}\int_\Omega f\left(x,\alpha_{y_0} y_0^+\right)\alpha_{y_0} y_0^+\,\mathrm{d} x
		+\frac{1}{q\vartheta}\int_\Omega f\left(x,-\beta_{y_0} y_0^-\right)(-\beta_{y_0} y_0^-)\,\mathrm{d} x  \\
		 & =a_0 \left(\frac{1}{p}-\frac{1}{q\vartheta}\right) \left(\alpha_{y_0}^p\l\|\nabla y_0^+\r\|_p^p+\beta_{y_0}^p\l\|\nabla y_0^-\r\|_p^p \right) \\
		 & \quad +a_0 \left(\frac{1}{q}-\frac{1}{q\vartheta}\right) \left(\alpha_{y_0}^q\l\|\nabla y_0^+\r\|_{q,\mu}^q+\beta_{y_0}^q\l\|\nabla y_0^-\r\|_{q,\mu}^q \right) \\
		 & \quad +b_0\l(\frac{\alpha_{y_0}^p}{p}\l\|\nabla y_0^+\r\|_p^p+\frac{\beta_{y_0}^p}{ p}\l\|\nabla y_0^-\r\|_p^p+\frac{\alpha_{y_0}^q}{q}\l\|\nabla y_0^+\r\|_{q,\mu}^q+\frac{\beta_{y_0}^q}{ q}\l\|\nabla y_0^-\r\|_{q,\mu}^q\r)^{\vartheta-1}  \\
		 & \quad \times \Bigg [ \left(\frac{1}{p\vartheta}-\frac{1}{q\vartheta}\right) \left(\alpha_{y_0}^p\l\|\nabla y_0^+\r\|_p^p+\beta_{y_0}^p\l\|\nabla y_0^-\r\|_p^p \right)  \\
		 & \qquad \quad  +\left(\frac{1}{q\vartheta}-\frac{1}{q\vartheta}\right) \left(\alpha_{y_0}^q\l\|\nabla y_0^+\r\|_{q,\mu}^q+\beta_{y_0}^q\l\|\nabla y_0^-\r\|_{q,\mu}^q \right)\Bigg]  \\
		 & \quad +\int_\Omega \left(\frac{1}{q\vartheta}f(x,\alpha_{y_0} y_0^+)\alpha_{y_0} y_0^+-F(x,\alpha_{y_0} y_0^+)\right)\,\mathrm{d} x  \\
		 & \quad +\int_\Omega \left(\frac{1}{q\vartheta}f(x,-\beta_{y_0} y_0^-)(-\beta_{y_0} y_0^-)-F(x,-\beta_{y_0} y_0^-)\right)\,\mathrm{d} x   \\
		 & \leq a_0 \left(\frac{1}{p}-\frac{1}{q\vartheta}\right) \left(\l\|\nabla y_0^+\r\|_p^p+\l\|\nabla y_0^-\r\|_p^p \right)    \\
		 & \quad +a_0 \left(\frac{1}{q}-\frac{1}{q\vartheta}\right) \left(\l\|\nabla y_0^+\r\|_{q,\mu}^q+\l\|\nabla y_0^-\r\|_{q,\mu}^q \right)   \\
		 & \quad +b_0\l(\frac{1}{p}\l\|\nabla y_0^+\r\|_p^p+\frac{1}{ p}\l\|\nabla y_0^-\r\|_p^p+\frac{1}{q}\l\|\nabla y_0^+\r\|_{q,\mu}^q+\frac{1}{ q}\l\|\nabla y_0^-\r\|_{q,\mu}^q\r)^{\vartheta-1}  \\
		 & \quad \times \Bigg [ \left(\frac{1}{p\vartheta}-\frac{1}{q\vartheta}\right) \left(\l\|\nabla y_0^+\r\|_p^p+\l\|\nabla y_0^-\r\|_p^p \right)  \\
		 & \qquad \quad  +\left(\frac{1}{q\vartheta}-\frac{1}{q\vartheta}\right) \left(\l\|\nabla y_0^+\r\|_{q,\mu}^q+\l\|\nabla y_0^-\r\|_{q,\mu}^q \right)\Bigg]  \\
		 & \quad +\int_\Omega \left(\frac{1}{q\vartheta}f(x,y_0^+)y_0^+-F(x,y_0^+)\right)\,\mathrm{d} x   \\
		 & \quad
		+\int_\Omega \left(\frac{1}{q\vartheta}f(x,-y_0^-)(-y_0^-)-F(x,- y_0^-)\right)\,\mathrm{d} x   \\
		 & \leq \liminf_{n\to \infty} \left(\varphi\left(y_n^+-y_n^-\right)
		-\frac{1}{q\vartheta} \left\langle \varphi'\left(y_n^+-y_n^-\right),y_n^+-y_n^- \right\rangle \right)=m_0,
	\end{align*}
	where in the last line we first use the weak lower semicontinuity of $\|\cdot\|_p$ and $\|\cdot\|_{q,\mu}$ together with \eqref{prop_inf_achieved_2} and \eqref{prop_inf_achieved_3} and then rearrange the terms inside the limit. Therefore $\alpha_{y_0}=\beta_{y_0}=1$ and so the infimum $m_0$ is achieved by the function $y_0^+-y_0^-$.
\end{proof}

\begin{proposition}
	\label{proposition_sign-changing-solution}
	Let hypotheses \eqref{H1}--\eqref{H3} be satisfied and let $y_0\in\mathcal{M}$ be such that $\varphi(y_0)=m_0$. Then $y_0$ is a critical point of $\varphi$. In particular, $y_0$ is a least energy sign-changing solution of problem \eqref{problem}.
\end{proposition}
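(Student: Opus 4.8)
The plan is to argue by contradiction: assume $\varphi'(y_0)\neq 0$ and then deform the two–parameter family $(\alpha,\beta)\mapsto \alpha y_0^+-\beta y_0^-$ so as to manufacture an element of $\mathcal{M}$ whose energy is strictly below $m_0$, contradicting $m_0=\inf_{\mathcal{M}}\varphi$ from Proposition \ref{proposition_positive_infimum_1}. To set this up, since $\varphi\in C^1$ and $\varphi'(y_0)\neq 0$, by continuity I would fix $\lambda>0$ and $\delta_0>0$ with $\|\varphi'(v)\|_*\geq\lambda$ for all $v\in \overline{B}(y_0,3\delta_0)$; note this persists for every smaller radius, so I keep the freedom to shrink $\delta\in(0,\delta_0]$ later. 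Next I recall from Proposition \ref{proposition_nodal_unique-pair} that, writing $g(\alpha,\beta)=\alpha y_0^+-\beta y_0^-$ and $\Upsilon_{y_0}(\alpha,\beta)=\varphi(g(\alpha,\beta))$, the point $(1,1)$ is the unique global maximum of $\Upsilon_{y_0}$ and $\varphi(g(\alpha,\beta))<\varphi(y_0)=m_0$ whenever $(\alpha,\beta)\neq(1,1)$. Fixing a small square $D_\rho=[1-\rho,1+\rho]^2$ with $\rho\in(0,\tfrac12)$, continuity of $g$ at $(1,1)$ lets me choose $\rho$ so small that $g(\overline{D_\rho})\subseteq B(y_0,\delta)$, and I set $\bar m_\rho:=\max_{\partial D_\rho}\Upsilon_{y_0}<m_0$.

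Second, I would invoke the quantitative deformation lemma (Lemma \ref{Le:DeformationLemma}) with $S=B(y_0,\delta)$, $c=m_0$, and $\varepsilon\in\bigl(0,\min\{(m_0-\bar m_\rho)/2,\ \lambda\delta/8\}\bigr)$. Because $S_{2\delta}\subseteq B(y_0,3\delta)$, the required estimate $\|\varphi'(u)\|_*\geq 8\varepsilon/\delta$ holds on $\varphi^{-1}([m_0-2\varepsilon,m_0+2\varepsilon])\cap S_{2\delta}$, so a deformation $\eta$ exists. Setting $\tilde g(\alpha,\beta):=\eta(1,g(\alpha,\beta))$, I extract two facts. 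Since $\varphi\circ g\leq m_0\leq m_0+\varepsilon$ on $\overline{D_\rho}$ and $g(\overline{D_\rho})\subseteq S$, property (ii) gives $\varphi(\tilde g(\alpha,\beta))\leq m_0-\varepsilon$ for all $(\alpha,\beta)\in\overline{D_\rho}$. On the other hand, on $\partial D_\rho$ we have $\varphi\circ g\leq \bar m_\rho<m_0-2\varepsilon$, so $g(\alpha,\beta)\notin\varphi^{-1}([m_0-2\varepsilon,m_0+2\varepsilon])$ and property (i) forces $\eta(1,g(\alpha,\beta))=g(\alpha,\beta)$; hence $\tilde g=g$ on $\partial D_\rho$.

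Third, I would define the continuous field $\hat\Theta\colon\overline{D_\rho}\to\R^2$ by $\hat\Theta(\alpha,\beta)=\bigl(\langle\varphi'(\tilde g),\tilde g^+\rangle,\ \langle\varphi'(\tilde g),-\tilde g^-\rangle\bigr)$, continuity following from that of $g$, of $\eta(1,\cdot)$, of $w\mapsto w^\pm$ on $W^{1,\mathcal{H}}_0(\Omega)$, and of $\varphi'$. On $\partial D_\rho$ one has $\tilde g=g$, so $\tilde g^+=\alpha y_0^+$, $\tilde g^-=\beta y_0^-$, and $\hat\Theta$ coincides there with the map $\Lambda_{y_0}$ of Proposition \ref{proposition_nodal_unique-pair}; the four edge sign patterns are exactly cases (i)--(iv) of that proposition (since $\rho<\tfrac12$, each edge satisfies the required ordering of $\alpha,\beta$). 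Translating to $P=[-\rho,\rho]^2$ and passing to $d=-\hat\Theta$, these are precisely the boundary hypotheses of the Poincar\'{e}-Miranda theorem (Theorem \ref{theorem-poincare-miranda}), which yields a zero $(\alpha^*,\beta^*)$; as the signs on $\partial D_\rho$ are strict, this zero is interior. Finally, property (iv) gives $\|\tilde g-g\|\leq\delta$, while $\|g^\pm(\alpha^*,\beta^*)\|\geq(1-\rho)M>0$ by the uniform bound of Proposition \ref{proposition_bound_elements_nehari}; since $g(\overline{D_\rho})$ is compact and $w\mapsto\|w^\pm\|$ is continuous and bounded away from $0$ on it, shrinking $\delta$ a priori guarantees $\tilde g(\alpha^*,\beta^*)^\pm\neq 0$. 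Thus $\tilde g(\alpha^*,\beta^*)\in\mathcal{M}$ with $\varphi(\tilde g(\alpha^*,\beta^*))\leq m_0-\varepsilon<m_0$, a contradiction. Hence $\varphi'(y_0)=0$; as $y_0^\pm\neq 0$ it is sign-changing, and since every sign-changing solution lies in $\mathcal{M}$, the equality $\varphi(y_0)=m_0=\inf_{\mathcal{M}}\varphi$ makes $y_0$ a least energy sign-changing solution of \eqref{problem}.

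The step I expect to be the main obstacle is the interface between the two tools in the third paragraph: one must choose $\delta,\rho,\varepsilon$ in the right order so that the deformation acts as the identity on $\partial D_\rho$ (so the boundary sign information survives from $\Lambda_{y_0}$ to $\hat\Theta$), while simultaneously guaranteeing both that $\hat\Theta$ is genuinely admissible for Theorem \ref{theorem-poincare-miranda} and that the Poincar\'{e}-Miranda zero still produces a bona fide element of $\mathcal{M}$, i.e. that the deformation does not annihilate $\tilde g^+$ or $\tilde g^-$. The uniform lower bound from Proposition \ref{proposition_bound_elements_nehari} combined with property (iv) is exactly what is needed to close this last point.
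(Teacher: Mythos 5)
Your proposal is correct and follows essentially the same route as the paper: contradiction via the quantitative deformation lemma applied around $y_0$ with $c=m_0$, identity of the deformation on the boundary of a small square where $\varphi\circ g<m_0-2\varepsilon$, the sign information from Proposition \ref{proposition_nodal_unique-pair}(i)--(iv) fed into the Poincar\'{e}-Miranda theorem, and the lower bound on $\|\cdot^\pm\|$ to certify membership in $\mathcal{M}$. The only difference is at the very last step: the paper avoids your compactness argument (and the attendant circularity between $\delta$ and $\rho$ that you flag) by fixing once and for all $\delta_1<C^{-1}\min\{\|y_0^+\|_p,\|y_0^-\|_p\}$, which depends only on $y_0$ and directly forces $v^\pm\neq 0$ for every $v$ with $\|v-y_0\|<\delta_1$.
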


\begin{proof}
	We argue via contradiction and suppose that $\varphi'(y_0)\neq 0$. Then there exist  $\lambda, \delta_0 > 0$ such that
	\begin{align*}
		\|\varphi'(u)\|_{*} \geq \lambda, \quad
		\text{for all } u \in W^{1,\mathcal{H}}_0(\Omega) \text{ with } \|u-y_0\| < 3 \delta_0.
	\end{align*}
	Let $C$ be an embedding constant for $ W^{1, \mathcal{H} }_0 ( \Omega ) \to L^{p}(\Omega)$. Since $y_0^+ \neq 0 \neq y_0^-$, we have for any $v \in  W^{1, \mathcal{H} }_0 ( \Omega )$
	\begin{align*}
		\|y_0-v\|\geq C^{-1} \|y_0-v\|_p
		\geq
		\begin{cases}
			C^{-1} \|y_0^-\|_p, & \text{if } v^- = 0,  \\
			C^{-1} \|y_0^+\|_p, & \text{if } v^+ = 0.
		\end{cases}
	\end{align*}
	Now we take $\delta_1$ such that
	\begin{align*}
		\delta_1 \in \left(0,\min \left\{C^{-1} \|y_0^-\|_p,C^{-1} \|y_0^+\|_p\right\}\right).
	\end{align*}
	Therefore, for any $v \in  W^{1, \mathcal{H} }_0 ( \Omega )$ with $\|y_0-v\|< \delta_1$ it holds $v^+ \neq 0 \neq v^-$.

	Let $\delta = \min \{ \delta_0, \delta_1 / 2\}$. Since $(\alpha,\beta) \mapsto \alpha y_0^+ - \beta y_0^-$ is a continuous mapping from $[0,\infty)\times [0,\infty)$ into $ W^{1, \mathcal{H} }_0 ( \Omega )$, there exists $0 < \tau < 1$ such that for all $\alpha,\beta \geq 0$ with $\max \{|\alpha - 1|, |\beta - 1| \} < \tau$, we have
	\begin{align*}
		\left\|\alpha y_0^+ - \beta y_0^- - y_0 \right\| < \delta.
	\end{align*}
	Let $D = ( 1 - \tau, 1 + \tau) \times ( 1 - \tau, 1 + \tau)$. From Proposition \ref{proposition_nodal_unique-pair} we obtain for any $\alpha,\beta  \geq 0$ with $(\alpha,\beta) \neq  (1,1)$ that
	\begin{align}
		\label{Eq:ParametersEstimatedByInfimum}
		\varphi(\alpha y_0^+ - \beta y_0^-)
		<\varphi(y_0^+ - y_0^-)=\inf_{u \in \mathcal{M}} \varphi(u).
	\end{align}
	From this, we conclude that
	\begin{align*}
		\varrho = \max_{(\alpha,\beta) \in \partial D} \varphi(\alpha y_0^+ - \beta y_0^-)
		<\varphi(y_0^+ - y_0^-)=\inf_{u \in \mathcal{M}} \varphi(u).
	\end{align*}

	Now we are going to apply the quantitative deformation lemma given in Lemma \ref{Le:DeformationLemma} with
	\begin{align*}
		S = B (y_0, \delta), \quad c = \inf_{u \in \mathcal{M}} \varphi(u), \quad \varepsilon = \min \left\lbrace \frac{c - \varrho}{4}, \frac{\lambda \delta}{8} \right\rbrace , \quad \delta \text{ be as defined above.}
	\end{align*}
	Note that $S_{2 \delta} = B (y_0, 3 \delta)$. Then, by the choice of $\varepsilon$, the assumptions of Lemma \ref{Le:DeformationLemma} are satisfied. Therefore, a mapping $\eta$ with the properties stated in the lemma exists. Due to the choice of $\varepsilon$ we get
	\begin{align}
		\label{Eq:2epsAtBoundary}
		\varphi(\alpha y_0^+ - \beta y_0^-)
		\leq \varrho + c - c
		< c - \left( \frac{c - \varrho}{2} \right)
		\leq c - 2 \varepsilon
	\end{align}
	for all $(\alpha,\beta) \in \partial D$.

	Let us now define the mappings $H\colon [0,\infty)\times [0,\infty)\to  W^{1, \mathcal{H} }_0 ( \Omega )$, $\Pi\colon [0,\infty)\times [0,\infty)\to\R^2$ by
	\begin{align*}
		H(\alpha,\beta)    & =\eta\l(1,\alpha y_0^+-\beta y_0^-\r)   \\
		\Pi(\alpha,\beta ) & =\l(\l\langle \varphi'\l(H(\alpha,\beta)\r),H^+(\alpha,\beta)\r\rangle,\l\langle \varphi'\l(H(\alpha,\beta)\r),-H^-(\alpha,\beta)\r\rangle\r).
	\end{align*}
	It is clear that $H$ is continuous because of the continuity of $\eta$ and the differentiability of $\varphi$ implies that $\Pi$ is continuous. From Lemma \ref{Le:DeformationLemma} \textnormal{(i)} along with \eqref{Eq:2epsAtBoundary}, we have that $H(\alpha,\beta) = \alpha y_0^+ - \beta y_0^-$ for all $(\alpha,\beta) \in \partial D$ and
	\begin{align*}
		\Pi(\alpha,\beta) = \left( \langle \varphi'(\alpha y_0^+-\beta y_0^-) , \alpha y_0^+ \rangle , \langle \varphi'(\alpha y_0^+-\beta y_0^-) , - \beta y_0^- \rangle \; \right).
	\end{align*}
	Now, using the information on the derivatives from Proposition \ref{proposition_nodal_unique-pair}, we see that we have the componentwise inequalities
	\begin{align*}
		 & \Pi_1 (1 - \tau,t) > 0 > \Pi_1 (1 + \tau,t),  \\
		 & \Pi_2 (t, 1 - \tau) > 0 > \Pi_2 (t,1 + \tau) \quad
		\text{for all } t \in [1 - \tau, 1 + \tau],
	\end{align*}
	where $\Pi=(\Pi_1,\Pi_2)$.
	Then, by the Poincar\'{e}-Miranda existence theorem given in Theorem \ref{theorem-poincare-miranda} applied to $d(\alpha,\beta) = - \Pi(1 + \alpha, 1 + \beta)$, we find $(\alpha_0,\beta_0) \in D$ such that $\Pi(\alpha_0,\beta_0) = 0$ which can be equivalently written as
	\begin{align*}
		\langle \varphi'(H(\alpha_0 , \beta_0)) , H^+(\alpha_0 , \beta_0) \rangle = 0 = \langle \varphi'(H(\alpha_0 , \beta_0)) , -H^- (\alpha_0 , \beta_0) \rangle.
	\end{align*}
	Taking Lemma \ref{Le:DeformationLemma} \textnormal{(iv)} and the choice of $\tau$ into account, we obtain
	\begin{align*}
		\left\|H(\alpha_0,\beta_0)-y_0 \right\| \leq 2 \delta \leq \delta_1.
	\end{align*}
	However, by the choice of $\delta_1$, this leads to
	\begin{align*}
		H^+(\alpha_0 , \beta_0) \neq 0 \neq - H^-(\alpha_0 , \beta_0).
	\end{align*}
	Therefore, $H(\alpha_0,\beta_0) \in \mathcal{M}$. But, from Lemma \ref{Le:DeformationLemma} \textnormal{(ii)}, the choice of $\tau$ and \eqref{Eq:ParametersEstimatedByInfimum}, it follows that $\varphi(H(\alpha_0,\beta_0) ) \leq c - \varepsilon$, a contradiction. Thus, $y_0$ is a critical point of $\varphi$ and so a least energy sign-changing solution to our problem \eqref{problem}, see Proposition \ref{proposition_infimum_achieved}.
\end{proof}

The proof of Theorem \ref{theorem_main_result} follows now from Propositions \ref{proposition_constant_sign_solutions} and \ref{proposition_sign-changing-solution}.
\section*{Acknowledgment}

The first author was funded by the Deutsche Forschungsgemeinschaft (DFG, German Research Foundation) under Germany's Excellence Strategy - The Berlin Mathematics Research Center MATH+ and the Berlin Mathematical School (BMS) (EXC-2046/1, project ID: 390685689). The second author thanks the University of Technology Berlin for the kind hospitality during a research stay in September 2023.

\end{document}